\numberwithin{equation}{section}
\newtheorem{theorem}{Theorem}[section]
\newtheorem{remark}[theorem]{Remark}
\newtheorem{lemma}[theorem]{Lemma}
\newtheorem{corollary}[theorem]{Corollary}
\newtheorem{proposition}[theorem]{Proposition}
\newtheorem{definition}[theorem]{Definition}
\newtheorem{fact}[theorem]{Fact}
\title{On the diameter of subgradient sequences in o-minimal structures}
\author{\large  Lexiao Lai\thanks{\url{lai.lexiao@hku.hk}, Department of Mathematics, The University of Hong Kong, Hong Kong.} \and Mingzhi Song\thanks{\url{songmingzhi123@gmail.com}, Department of Mathematics, The University of Hong Kong, Hong Kong.}}
\date{}
\begin{document}

\maketitle
\vspace*{-5mm}
\begin{center}
    \textbf{Abstract}
    \end{center}
    \vspace*{-4mm}
 \begin{adjustwidth}{0.2in}{0.2in}
~~~~ We study subgradient sequences of locally Lipschitz functions definable in a polynomially bounded o-minimal structure. We show that the diameter of any subgradient sequence is related to the variation in function values, with error terms dominated by a double summation of step sizes. Consequently, we prove that bounded subgradient sequences converge if the step sizes are of order $1/k$. The proof uses Lipschitz $L$-regular stratifications in o-minimal structures to analyze subgradient sequences via their projections onto different strata.

\end{adjustwidth}
\section{Introduction}
Let $f:\mathbb{R}^n\to \mathbb{R}$ be locally Lipschitz. We study \emph{subgradient sequences} $(x_k)_{k\in \mathbb{N}}$ defined by
\begin{equation*}
    x_{k+1} \in x_k - \alpha_k \partial f(x_k)
\end{equation*}
for $k\in \mathbb{N}$, where $x_0\in \mathbb{R}^n$ is arbitrary, $(\alpha_k)_{k\in \mathbb{N}}$ is a sequence of positive scalars (called \emph{step sizes}) that is not summable, and $\partial f:\mathbb{R}^n\rightrightarrows \mathbb{R}^n$ denotes the Clarke subdifferential \cite{clarke1975,clarke1990} of $f$. Subgradient sequences are discretizations of continuous-time subgradient trajectories \cite{bolte2007lojasiewicz}, which are solutions to the differential inclusion $x'\in -\partial f(x)$. Subgradient trajectories can be viewed as generalizations of classical gradient trajectories of smooth functions \cite{absil2006stable,santambrogio2017euclidean}. In the optimization literature, subgradient sequences are realizations of the subgradient method \cite{shor1962application}, which generalizes Cauchy's steepest descent method \cite{cauchy1847methode} to minimize locally Lipschitz functions. The subgradient method and its variants garnered significant attention within the machine learning community recently, due to their success in solving large-scale optimization problems arising from deep learning and artificial intelligence \cite{sutskever2013importance,lecun2015deep,vaswani2017attention}.

Subgradient sequences can behave erratically, even for functions that are $C^\infty$ \cite{palis2012geometric,absil2005convergence,daniilidis2020pathological}. It is for this reason that we assume additional geometric structures of the objective function $f$, that is, the function is definable in o-minimal structures \cite{pillay1986definable,van1996geometric,coste2000introduction}. We defer the definition and discussion of o-minimal structures to Section \ref{sec:o-minimal}. At a high level, o-minimal structures generalize semialgebraic sets \cite{tarski1951decision}, and are families of ``tame'' subsets of $\mathbb{R}^n$ that possess certain finiteness properties. The study of (sub)gradient dynamics for definable functions was initiated in \L{}ojasiewicz's pioneer works \cite{lojasiewicz1963propriete,lojasiewicz1982trajectoires} on (real) analytic functions. It was shown that bounded (continuous-time) gradient trajectories of analytic functions have finite length, a consequence of the gradient inequality (known as the \L{}ojasiewicz gradient inequality \cite{lojasiewicz1958}) of analytic functions. This inequality can be extended to smooth functions definable in arbitrary o-minimal structures \cite{kurdyka1998gradients} and later to nonsmooth functions in \cite{bolte2007lojasiewicz}. Consequently, the convergence of subgradient trajectories in these two settings is also established \cite{kurdyka1998gradients,bolte2010characterizations}.

In contrast to their continuous-time counterparts, bounded subgradient sequences are known to converge only when $f$ is either 1)  definable and differentiable with a locally Lipschitz gradient \cite{absil2005convergence} or 2) convex \cite{alber1998projected}, given that the step sizes $(\alpha_k)_{k\in \mathbb{N}}$ are square summable. For nonconvex nonsmooth functions, recent works proposed to analyze subgradient sequences under the assumption that $f$ is ``path-differentiable'' \cite{davis2020stochastic,bolte2022long,bolte2025inexact}. Path-differentiable functions are functions that satisfy a chain rule when precomposed with any absolutely continuous arc \cite[Definition 5.1]{davis2020stochastic}\cite[Definition 3]{bolte2020conservative}. Locally Lipschitz functions definable in o-minimal structures are path-differentiable \cite[Theorem 5.8]{davis2020stochastic}, as their graphs can be stratified into smooth manifolds. If in addition $f$ satisfies the weak Sard property, i.e., $f$ is constant on connected components of its critical set\footnote{$x\in \mathbb{R}^n$ is a critical point of $f$ if $0\in \partial f(x)$. The collection of all critical points is the critical set.}, then the limit points of any bounded subgradient sequence $(x_k)_{k\in \mathbb{N}}$ are critical points, and the function values $(f(x_k))_{k\in \mathbb{N}}$ converge \cite[Theorem 3.2]{davis2020stochastic}\cite[Theorem 5]{bolte2022long}. It is worth noticing that their approach regards the subgradient sequence as an approximation of subgradient trajectories, inspired by previous works in stochastic approximation \cite{ljung1977analysis,kushner1977general,benaim2005stochastic,duchi2018stochastic}. Drawing tools from the theory of closed measures, one can further study the oscillation of subgradient sequences \cite{bolte2022long}. 

It is natural to wonder whether and when the subgradient sequences with vanishing step sizes will converge. By an example of Ríos-Zertuche \cite[Section 2]{rios2022examples}, subgradient sequences of path-differentiable functions can indeed oscillate, with step sizes $\alpha_k:= 1/(k+1)$. In fact, the constructed ``pathological'' function is Whitney $C^\infty$ stratifiable and satisfies the nonsmooth Łojasiewicz gradient inequality \cite[Proposition 6]{rios2022examples}. It is noteworthy that the subgradient trajectories of the same function converge, due to the nonsmooth \L{}ojasiewicz gradient inequality \cite{bolte2010characterizations}. This highlights the distinct dynamics of subgradient sequences compared to their continuous-time counterparts, emphasizing the need for additional geometric structures to guarantee their convergence.

In this work, we aim to conduct a refined analysis on subgradient sequences of locally Lipschitz functions definable in o-minimal structures. We seek to identify conditions under which the sequence will converge if bounded. Recall that the \emph{diameter} of a set $A\subset \mathbb{R}^n$ is given by $\mathrm{diam}(A):= \sup\{|a-b|:a,b\in A\}$. In our main result (Theorem \ref{thm:diameter}), we estimate the diameter of subgradient sequences when they stay close to a level set of $f$. We show that the diameter is controlled by a difference in function values, up to high-order accumulations of the step sizes. Let $a,b\in \mathbb{N}$ such that $a \le b$, we denote by $\llbracket a,b \rrbracket:= \{a,\ldots,b\}$. Given a sequence $(x_k)_{k\in \mathbb{N}}$, we denote by $x_{\llbracket a,b \rrbracket}:= \{x_a,\ldots,x_b\}$. For a function $g:\mathbb{R}^n\rightarrow \mathbb{R}$ and $v\in \mathbb{R}^n$, we denote by $[g \le v]:=\{x\in \mathbb{R}^n:g(x) \le v\}$ the sublevel set of $g$ with respect to the value $v$. We also define the sign function $\mathrm{sgn}:\mathbb{R}\to \{-1,0,1\}$ that returns $-1$ for negative values, $1$ for positive values, and $0$ for zero. We are now ready to present the main result of this paper.
\begin{theorem}\label{thm:diameter}
    Let $f:\mathbb{R}^n\to \mathbb{R}$ be locally Lipschitz and definable in a polynomially bounded o-minimal structure. For any bounded $X\subset \mathbb{R}^n$, there exist $\bar{\alpha},\beta,\epsilon,\varsigma_1,\varsigma_2>0$ and $\theta\in (0,1)$ such that for any subgradient sequence $(x_k)_{k\in \mathbb{N}}$ with step sizes $0<\alpha_{K-1} \le \cdots \le \alpha_0 \le \bar{\alpha}$ and $x_{\llbracket 0,K \rrbracket} \subset X\cap [ |f| \le \epsilon]$ for some $K\in \mathbb{N}$, we have
        \begin{align*}
        \mathrm{diam}(x_{\llbracket 0,K \rrbracket}) \le~&\varsigma_1\left(\mathrm{sgn}(f(x_0))|f(x_0)|^{1-\theta}-\mathrm{sgn}(f(x_K))|f(x_K)|^{1-\theta}\right) \\
        &+\varsigma_2\left(\alpha_0^\beta+ \sum_{k=0}^{K-1}\alpha_k^{1+\beta} +\left(\sum_{k=0}^{K-1}\alpha_k^{1+\beta}\right)^{1-\theta}+  \sum_{k=0}^{K-1} \alpha_k \left(\sum_{j=k}^{K-1} \alpha_j^{1+\beta}\right)^\theta\right).
    \end{align*}
\end{theorem}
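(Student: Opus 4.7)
The plan is to reduce the global analysis on $X$ to local analyses on strata via a Lipschitz $L$-regular stratification of $X$ compatible with level sets of $f$, which exists because $f$ is definable in a polynomially bounded o-minimal structure. On each stratum $M$, $f|_M$ is $C^1$ and, by the nonsmooth \L{}ojasiewicz gradient inequality, satisfies $|\mathrm{grad}(f|_M)(y)| \ge c\,|f(y)|^\theta$ for uniform constants $c > 0$ and $\theta\in(0,1)$. The polynomial boundedness hypothesis supplies a quantitative regularity: for every $x$ sufficiently close to $M$, every Clarke subgradient $g\in\partial f(x)$ decomposes as $g = \mathrm{grad}(f|_M)(\pi_M(x)) + r$ with $|r|\le C\,\mathrm{dist}(x,M)^\beta$, for some $\beta>0$ depending only on the stratification. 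I would use the o-minimal finiteness to work simultaneously with all strata up to a uniform neighborhood.

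\textbf{One-step inequalities.} For each iterate $x_k$, I would assign a ``home stratum'' $M_k$ realizing $\mathrm{dist}(x_k,\cdot)$ in the stratification. Combining the decomposition above with the restricted \L{}ojasiewicz inequality on $M_k$, two one-step bounds should follow: a descent estimate
\[
f(\pi_{M_k}(x_{k+1})) \le f(\pi_{M_k}(x_k)) - c_1\,\alpha_k\,|\mathrm{grad}(f|_{M_k})(\pi_{M_k}(x_k))|^2 + c_2\,\alpha_k\,\mathrm{dist}(x_k,M_k)^{2\beta},
\]
and a step bound $|x_{k+1}-x_k| \le \alpha_k\bigl(c_3|f(x_k)|^\theta + c_4\,\mathrm{dist}(x_k,M_k)^\beta\bigr)$. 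The distance $\mathrm{dist}(x_k,M_k)$ is controlled inductively by the displacement at previous steps, so together with the monotonicity $\alpha_k\le\alpha_0$ one can absorb its contribution into $\alpha_0^\beta$ plus $\sum_k\alpha_k^{1+\beta}$, accounting for two of the four error terms in the $\varsigma_2$-block.

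\textbf{Telescoping with a desingularizing function.} Introduce $\psi(v):=\mathrm{sgn}(v)|v|^{1-\theta}$, whose appearance explains the sign terms in the statement, since $f(x_k)$ may cross zero inside $[|f|\le\epsilon]$. Standard Kurdyka--\L{}ojasiewicz manipulations combining the descent and step bounds yield an inequality of the form
\[
|x_{k+1}-x_k| \le C\bigl(\psi(f(x_k))-\psi(f(x_{k+1}))\bigr) + E_k,
\]
valid in the regime where $|f(x_k)|$ is bounded below relative to the local error. Summing telescopes the first term into the $\varsigma_1$-contribution. In the complementary regime where $|f(x_k)|$ is too small for this telescoping, I would bound $|f(x_k)|^\theta$ from above by the future cumulative descent plus future errors, which is itself dominated by $\sum_{j\ge k}\alpha_j^{1+\beta}$, and then insert this into the step bound to obtain a contribution $\sum_k\alpha_k\bigl(\sum_{j\ge k}\alpha_j^{1+\beta}\bigr)^\theta$ after an Abel-type rearrangement; the $(\sum_k\alpha_k^{1+\beta})^{1-\theta}$ term arises from a similar concavity estimate applied to the tail of the $\psi$-telescoping when $f(x_K)$ itself is small.

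\textbf{Main obstacle.} The principal difficulty lies in managing transitions between strata: when $M_k$ changes with $k$, the quadratic descent along $M_k$ does not chain directly into a descent along $M_{k+1}$, and reference projections jump. The o-minimal finiteness of the stratification caps how often such genuine transitions occur, while polynomial boundedness provides the uniform H\"older exponent $\beta$ that converts distance-to-stratum into a power of a past step size. The most technically delicate part is cleanly partitioning the iteration index range according to which of the two regimes (direct $\psi$-telescoping versus small-$|f(x_k)|$ tail estimate) governs step $k$, and verifying that the switching errors are absorbed by the four-term $\varsigma_2$-bound rather than creating new uncontrolled terms.
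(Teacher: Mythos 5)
Your high-level plan is reasonable—stratify, work locally via the \L{}ojasiewicz inequality with a desingularizing function $\psi(v)=\mathrm{sgn}(v)|v|^{1-\theta}$, telescope, and recognize that the central difficulty is stratum transitions—and this does mirror the structure of the paper's argument. However, you do not actually resolve the obstacle you correctly identify, and the one tool you propose for it is incorrect: you write that ``the o-minimal finiteness of the stratification caps how often such genuine transitions occur.'' Finiteness bounds the \emph{number of distinct strata}, not the \emph{number of transitions}. A subgradient sequence can shuttle between the neighborhoods of two strata arbitrarily many times, so a per-transition error of order $\alpha_k^\beta$ (or $\alpha_k^{\gamma(1-\theta)}$, as it actually turns out) would accumulate into something non-summable if one simply paid for each transition. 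Your proposal, as written, leaves this sum unbounded.

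The paper fills this gap with a mechanism you never invoke: whenever the iterates genuinely leave the neighborhood of a stratum $M_i$ (i.e., travel from within distance $O(\alpha^{\gamma_i})$ of $M_i$ out past distance $O(\alpha^{\beta_i})$, with $\beta_i<\gamma_i(1-\theta)$), the iterates must cover a distance of at least $c\,\alpha^{\beta_i}$, and this forced displacement is large enough to \emph{offset} the transition error $O(\alpha^{\gamma_i(1-\theta)})$—because the exponents are deliberately arranged so $\beta_i<\gamma_i(1-\theta)<\gamma_j$ for frontier strata $M_j\subset\partial M_i$. The argument is then organized by an induction (Lemma~\ref{lemma:l->H}) not on transitions but on $U(l)$, the number of \emph{distinct} strata encountered on a crossing interval, which \emph{is} bounded by $T$; each level of the induction pays for one new stratum's entry error by charging it against the displacement accumulated when the iterates leave it. This bookkeeping—the indices $I_C$, $G$, $s$, $q$, $H$ and the interval decomposition of Lemma~\ref{lemma:l->H}—is the heart of the proof, and your proposal contains no analogue.

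A secondary inaccuracy: your decomposition $g=\mathrm{grad}(f|_M)(\pi_M(x))+r$ with $|r|\le C\,\mathrm{dist}(x,M)^\beta$ for a uniform exponent is not what is available. The actual estimate from the Lipschitz stratification (Proposition~\ref{prop:function_Lipschitz}, used through Proposition~\ref{proposition:neighborhoods of cells}) controls only the \emph{tangential} component $|P_{T_{M}(y)}(v)-\nabla_M f(y)|$, and its constant blows up like $d(y,\partial M)^{-\eta}$. One must work in the non-uniform neighborhoods $\mathcal{N}_0(i,\alpha)$ whose inner/outer radii scale as $\alpha^{\gamma_j}$ and $\alpha^{\beta_i}$ precisely so that this blow-up is converted into a power of the step size. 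Replacing this with a uniform H\"older bound in distance, and assigning ``home strata'' by nearest distance rather than by these scaled regions, would not furnish the inequalities your one-step descent bound needs.
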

Theorem \ref{thm:diameter} requires the objective function to be definable in a polynomially bounded o-minimal structure, which we recall at the beginning of Section \ref{sec:o-minimal}. We also need the step sizes $(\alpha_k)_{k\in \mathbb{N}}$ associated with the sequence to be small and decreasing, which coincides with the classical choices of step sizes in nonsmooth optimization \cite{polyak1967general,polyak1978subgradient,alber1998projected}. The constants that appear in the theorem depend only on the local geometry of $f$. In particular, $\theta$ is an exponent that appears at the \L{}ojasiewicz gradient inequality of $f$ when restricted on certain smooth manifolds. 

Combining with the existing guarantees for path-differentiable functions, Theorem \ref{thm:diameter} implies that bounded subgradient sequences converge to critical points of $f$, given that certain summations of step sizes are finite. This is the case when the step sizes $(\alpha_k)_{k\in \mathbb{N}}$ decrease and are of order $1/k$, which is widely used in the subgradient method for convex functions \cite{shor1985book,beck2017first}. Given two sequences of positive scalars $(a_k)_{k\in \mathbb{N}}$ and $(b_k)_{k\in \mathbb{N}}$, we write $(a_k)_{k\in \mathbb{N}} \sim (b_k)_{k\in \mathbb{N}}$ if there exist $c,C>0$ such that $c \le a_k/b_k \le C$ for all $k\in \mathbb{N}$.
\begin{corollary}
    \label{cor:1/k}
    Let $f:\mathbb{R}^n\to \mathbb{R}$ be locally Lipschitz and definable in a polynomially bounded o-minimal structure. Any bounded subgradient sequence $(x_k)_{k\in \mathbb{N}}$ with decreasing step sizes $(\alpha_k)_{k\in \mathbb{N}} \sim (1/(k+1))_{k\in \mathbb{N}}$ converges to a critical point of $f$.
\end{corollary}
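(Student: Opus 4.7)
The plan is to extract a Cauchy criterion from Theorem~\ref{thm:diameter} applied to tails of the sequence, combined with existing qualitative results for path-differentiable definable functions. Since $f$ is locally Lipschitz and definable, it is path-differentiable and satisfies the weak Sard property, so by \cite[Theorem 3.2]{davis2020stochastic} and \cite[Theorem 5]{bolte2022long} any bounded subgradient sequence with $\alpha_k\sim 1/(k+1)$ (which is nonsummable and square-summable) has a nonempty, compact, connected limit set $L$ contained in the critical set of $f$, with $f$ constant on $L$, and $f(x_k)\to f^*$ for the common value $f^*:= f|_L$. Replacing $f$ by $f-f^*$, which is still locally Lipschitz and definable in the same polynomially bounded structure, I may assume $f^*=0$.

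Next, set $X:=\overline{\{x_k:k\in\mathbb{N}\}}$, which is bounded, and obtain the constants $\bar{\alpha},\beta,\epsilon,\varsigma_1,\varsigma_2,\theta$ from Theorem~\ref{thm:diameter}. Since $\alpha_k\to 0$ and $f(x_k)\to 0$, pick $N_0$ with $\alpha_{N_0}\le\bar{\alpha}$ and $|f(x_k)|\le\epsilon$ for every $k\ge N_0$. For any $K\ge N\ge N_0$, the reindexed tail $(x_{N+j})_{j=0}^{K-N}$ is a valid input to Theorem~\ref{thm:diameter}: monotonicity of step sizes is inherited, $\alpha_N\le\bar{\alpha}$, and all iterates lie in $X\cap[|f|\le\epsilon]$. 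The theorem then yields
\begin{align*}
\mathrm{diam}(x_{\llbracket N,K \rrbracket}) \le~&\varsigma_1\bigl(\mathrm{sgn}(f(x_N))|f(x_N)|^{1-\theta}-\mathrm{sgn}(f(x_K))|f(x_K)|^{1-\theta}\bigr)\\
&+\varsigma_2\biggl(\alpha_N^\beta+\sum_{k=N}^{K-1}\alpha_k^{1+\beta}+\Bigl(\sum_{k=N}^{K-1}\alpha_k^{1+\beta}\Bigr)^{1-\theta}+\sum_{k=N}^{K-1}\alpha_k\Bigl(\sum_{j=k}^{K-1}\alpha_j^{1+\beta}\Bigr)^\theta\biggr).
\end{align*}

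The remaining task, which is where the main work lies, is to show that every term on the right tends to $0$ as $N\to\infty$ uniformly in $K\ge N$. The function-value bracket vanishes because $f(x_k)\to 0$ and $1-\theta>0$; $\alpha_N^\beta\to 0$; and since $\alpha_k\sim 1/(k+1)$, the bound $\sum_k\alpha_k^{1+\beta}\le C\sum_k(k+1)^{-(1+\beta)}<\infty$ forces the second and third error terms to have vanishing tails. The genuinely delicate piece is the nested sum: using $\sum_{j\ge k}\alpha_j^{1+\beta}\lesssim (k+1)^{-\beta}$ I would estimate
\begin{equation*}
\sum_{k=N}^{K-1}\alpha_k\Bigl(\sum_{j=k}^{K-1}\alpha_j^{1+\beta}\Bigr)^\theta \lesssim \sum_{k\ge N}(k+1)^{-(1+\beta\theta)},
\end{equation*}
which converges since $\beta\theta>0$, giving a vanishing tail. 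Thus $\mathrm{diam}(x_{\llbracket N,K\rrbracket})\to 0$ uniformly in $K$, so $(x_k)$ is Cauchy, its limit lies in $L$, and is therefore a critical point of $f$. The main obstacle is precisely this double-sum estimate: it requires exploiting both the strict positivity of $\beta\theta$ and the exact $1/(k+1)$ scaling of the step sizes, and it is the reason the $1/k$ rate is enough while slower decay would break the argument.
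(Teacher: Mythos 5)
Your proof is correct and follows essentially the same route as the paper's: both invoke \cite[Theorem 3.2]{davis2020stochastic} to establish convergence of $(f(x_k))$ to a critical value $f^*$, replace $f$ by $f-f^*$, apply Theorem~\ref{thm:diameter} to tails of the sequence, and deduce the Cauchy property from the vanishing of the right-hand side. You simply carry out explicitly the ``direct calculation'' the paper leaves implicit, and your estimates $\sum_{j\ge k}\alpha_j^{1+\beta}\lesssim (k+1)^{-\beta}$ and $\sum_k \alpha_k(\sum_{j\ge k}\alpha_j^{1+\beta})^\theta \lesssim \sum_k (k+1)^{-(1+\beta\theta)}<\infty$ are correct.
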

With Theorem \ref{thm:diameter}, the proof of Corollary \ref{cor:1/k} is quite straightforward: Given a bounded subgradient sequence $(x_k)_{k\in \mathbb{N}}$, by \cite[Theorem 3.2]{davis2020stochastic}, its limit points are critical and $(f(x_k))_{k\in \mathbb{N}}$ converges to $f^*\in \mathbb{R}$ as $(\alpha_k)_{k\in \mathbb{N}}$ is not summable. We apply Theorem \ref{thm:diameter} to the sequence with $f$ replaced by $f - f^*$, which yields an upper bound on $\mathrm{diam}(x_{\llbracket k_1,k_2 \rrbracket})$ for arbitrary $k_1,k_2\in \mathbb{N}$ that are sufficiently large. A direct calculation shows that this upper bound diminishes as $k_1\to \infty$, which implies that the sequence $(x_k)_{k\in \mathbb{N}}$ is Cauchy and thus convergent. This completes the proof of Corollary \ref{cor:1/k}.

From the previous discussions, it is evident that the proof of Theorem \ref{thm:diameter} must leverage the unique geometric properties of o-minimal structures. Our approach diverges from the literature \cite{davis2020stochastic,bolte2022long,bolte2025inexact}, which relies on the continuous-time limit of subgradient sequences. Instead, the proof hinges on the stratifications of definable sets with strong metric properties, which is discussed in Section \ref{sec:o-minimal}. Building on these stratifications, we decompose functions into smooth pieces with locally Lipschitz Riemannian gradients, which relate to their subdifferentials. This is the object of Section \ref{sec:decomposition}. Finally, we prove Theorem \ref{thm:diameter} in Section \ref{sec:main_proof}, which requires analyzing subgradient sequences as they alternate between the components proposed in Section \ref{sec:decomposition}.

\section{O-minimal structures and stratifications}\label{sec:o-minimal}
We begin by recalling some standard notations. Let $\mathbb{N}:=\{0,1,\ldots\}$ be the natural numbers. Let $|\cdot|$ be the induced norm of the Euclidean inner product $\langle \cdot, \cdot\rangle$ on $\mathbb{R}^n$. Let $B(a,r)$ and $\mathring{B}(a,r)$ respectively denote the closed ball and the open ball of center $a\in \mathbb{R}^n$ and radius $r > 0$. Given $A\subset \mathbb{R}^n$, we denote by $\overline{A}$ the closure of $A$, $\partial A:= \overline{A}\setminus A$ the frontier of $A$, and $B(A,r):= A + B(0,r)$. Given $x\in \mathbb{R}^n$, let $d(x,A) := \inf \{|x - y| : y \in A\}$ and $P_A(x):= \mathrm{argmin} \{y \in A:|x - y|\}$. We also use the convention that $d(x,\emptyset) = 1$. Given two sets $A,B\subset \mathbb{R}^n$, we define the distance between them by $d(A,B):= \inf\{d(a,B):a\in A\}$. Let $m$ and $p$ be positive integers, and let $f:A\rightarrow \mathbb{R}^m$. If $A$ is open, then $f$ is $p$ times continuously differentiable (or in short, $C^p$) if the $p$th Fr\'echet derivative of $f$ exists and is continuous in $A$. We denote by $D f$ (resp. $D^2 f$) the first (resp. second) order derivative of $f$. We can extend this definition to functions with non-open domains by saying that $f:A\rightarrow \mathbb{R}^m$ is $C^p$ if there exists a $C^p$ function $\bar{f}:U\rightarrow \mathbb{R}^m$ defined on an open neighborhood $U$ of $A$ such that $f(x) = \bar{f}(x)$ for every $x\in A$. Given an (embedded) smooth manifold $M\subset \mathbb{R}^n$ and $x\in M$, we denote by $T_M(x)$ and $N_M(x)$ respectively the tangent and normal spaces of $M$ at $x$. If $f:\mathbb{R}^n\rightarrow \mathbb{R}$ is $C^2$ on $M$, we denote by $\nabla_M f$ its Riemannian gradient on $M$ \cite[Definition 3.58]{boumal2023introduction}. By \cite[Proposition 3.61]{boumal2023introduction}, $\nabla_M f:M\to \mathbb R^n$ is a $C^1$ function given by $\nabla_M f(x) = P_{T_M(x)}(\nabla \bar{f}(x))$.

In this work, we consider functions and sets that are definable in a polynomially bounded o-minimal structure on the real field \cite{van1998tame,coste2000introduction}. A \emph{structure} on the real field $(\mathbb{R},+, \cdot)$ is a family 
$\mathcal{D} = (\mathcal{D}_n)_{n\ge 1}$,
where for each $n \ge 1$, $\mathcal{D}_n$ is a Boolean algebra of subsets of $\mathbb{R}^n$, satisfying the following properties:
\begin{enumerate}
    \item If $A\in\mathcal{D}_n$, then the sets $\mathbb{R}\times A$ and $A\times\mathbb{R}$ belong to $\mathcal{D}_{n+1}$.
    \item $\mathcal{D}_n$ contains $
    \{ x\in\mathbb{R}^n : P(x)=0 \}$ for all $P\in\mathbb{R}[X_1,\dots,X_n].
    $
    \item If $A\in\mathcal{D}_n$, then its projection 
    $\pi(A)\subset \mathbb{R}^{n-1}$, where $\pi:\mathbb{R}^n\to\mathbb{R}^{n-1}$ is the standard projection onto the first $n-1$ coordinates, belongs to $\mathcal{D}_{n-1}$.
\end{enumerate}
A structure $\mathcal{D}$ is said to be \emph{o-minimal} if, in addition, every set in $\mathcal{D}_1$ is a finite union of points and open intervals. A subset of $\mathbb{R}^n$ that belongs to $\mathcal{D}_n$ is called a \emph{definable set}, and a function whose graph is definable is called a \emph{definable function}. A structure $\mathcal{D}$ is said to be \emph{polynomially bounded} if for every definable function $f\colon\mathbb{R}\to\mathbb{R}$ there exist $a>0$ and $n\in\mathbb{N}$ such that
$|f(x)| < x^n$ for all $x > a$ \cite[p. 510]{van1996geometric}. Examples of polynomially bounded o-minimal structures include semialgebraic sets \cite{tarski1951decision} and globally subanalytic sets \cite{van1996geometric}. On one hand, sets and functions definable in these structures enjoy benign properties such as the \L{}ojasiewicz inequality \cite{lojasiewicz1958,bierstone1988semianalytic} and its consequences \cite{lojasiewicz1963propriete,van1996geometric}. On the other hand, in modern data science applications including the training of deep neural networks \cite{lecun2015deep,vaswani2017attention}, it appears that all the objective functions of interest are subanalytic \cite{bierstone1988semianalytic} and thus locally definable in the structure of global subanalytic sets. Throughout this paper, we fix an arbitrary polynomially bounded o-minimal structure on the real field, and say that the sets or functions are definable if they are definable in this structure.

An important line of research in the study of semialgebraic and o-minimal geometry focuses on the theory of stratification \cite{parusinski1994lipschitz, le1998verdier,trotman2020stratification}. Notably, definable sets can be stratified into a finite number of smooth manifolds that fit together nicely. We recall the following definition of stratification \cite{lojasiewicz1993geometrie,van1996geometric}. 
\begin{definition}
\label{def:stratification}
Let \(M\subset \mathbb{R}^n\) and  \(p\) be a positive integer.
A \emph{\(C^p\) stratification} of \(M\) is a finite partition $\mathcal{M} = \{M_i\}_{i\in I}$
of \(M\) into connected \(C^p\) manifolds \(M_i\subset \mathbb{R}^n\) (called strata) such that for each pair \(i\neq j\),
\[
\overline{M_i} \cap M_j   \neq  \emptyset 
\quad\Longrightarrow\quad
M_j   \subset   \partial M_i.
\]
\end{definition}
Let \(\mathcal{A}:=\{A_j\}_{j\in J}\) be a collection of subsets of $\mathbb{R}^n$. 
Then we say that a stratification \(\mathcal{M}\) is \emph{compatible} with 
$\mathcal{A}$ if for each pair \((M_i,A_j)\in \mathcal{M}\times \mathcal{A}\), it holds that either $M_i \subset A_j$ or $M_i\cap A_j = \emptyset$. We say that a stratification $\mathcal{M}$ is definable if
every stratum $M\in \mathcal{M}$ is definable. As the stratifications in this work can always be made definable and $C^p$ for arbitrary $p$, we will generally shorten ``definable $C^p$ stratification'' into ``stratification''.

Definable sets admit stratifications that come with extra conditions on the tangent spaces of adjacent strata. Examples of such conditions include Whitney's (a), (b), and (w) conditions\footnote{The (w) condition is also known as the Verdier condition \cite{verdier1976stratifications}.}  \cite{le1998verdier,whitney1992tangents}. Among these conditions, the (w) condition is the strongest and poses locally a Lipschitz-like condition on the tangent spaces. This condition has recently been used to analyze  (stochastic) subgradient sequences \cite{bianchi2023stochastic,davis2025active,josz2024sufficient}. In fact, the proof of Theorem \ref{thm:diameter} requires an even stronger form of stratification of definable sets, known as the Lipschitz stratification \cite{mostowski1985lipschitz,parusinski1988lipschitz}. As the definition of Lipschitz stratification is quite complex and has little to do with current work, we defer its definition to Appendix \ref{sec:Lipschitz} and refer the readers to \cite{mostowski1985lipschitz,parusinski1994lipschitz,nguyen2016lipschitz} for further details. Recall that compact sets definable in polynomially bounded o-minimal structures admit Lipschitz stratifications \cite{nguyen2016lipschitz,parusinski1994lipschitz}. By a counterexample of Parusi\'nski (see, for e.g., \cite[Example 2.9]{nguyen2016lipschitz}), this fails to hold in o-minimal structures that are not polynomially bounded. We will use the following fact that strata in a Lipschitz stratification satisfy Whitney's (w) condition with constants depending on distances to frontiers of the strata. This is a direct consequence of \cite[Corollary 1.6]{parusinski1988lipschitz} and the \L{}ojasiewicz inequality \cite[Theorem 6.4]{bierstone1988semianalytic}. Given $g:\mathbb{R}^n\to \mathbb{R}^m$, we denote by $|g|$ its operator norm.

\begin{proposition}\label{prop of lipschitz stratification}
    Let $\{M_i\}_{i\in I}$ be a definable Lipschitz stratification of a bounded set $M\subset \mathbb{R}^n$. There exist $C>0,\eta\ge0$ such that for any pair $i,j\in I$ such that $M_j\subset \partial M_i$, $x\in M_i$, and $y\in M_j$, we have
    \[
     |P_{N_{M_i}(x)}P_{T_{M_j}(y)}|\le \frac{C}{d(y,\partial M_j)^\eta}|x-y|
    \]
    with the convention that $d(y,\emptyset) = 1$.
\end{proposition}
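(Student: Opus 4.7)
The plan is to combine two ingredients: Parusi\'nski's \cite[Corollary 1.6]{parusinski1988lipschitz}, which asserts that the strata of a Lipschitz stratification satisfy Whitney's (w) condition between every pair of adjacent strata at interior points (with constants that are finite but a priori depend on the base point), and the \L{}ojasiewicz inequality \cite[Theorem 6.4]{bierstone1988semianalytic}, which in a polynomially bounded o-minimal structure controls the blow-up of a definable function near a definable zero set by a negative power of the distance. The proof then amounts to upgrading the pointwise Whitney (w) constant supplied by the former into the quantitative bound $C/d(y,\partial M_j)^\eta$ using the latter.

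For each pair $(i,j) \in I \times I$ with $M_j \subset \partial M_i$, I would introduce
\[
G_{ij}(y) := \sup\!\left\{ \frac{|P_{N_{M_i}(x)} P_{T_{M_j}(y)}|}{|x-y|} \,:\, x \in M_i,\ x \ne y \right\}, \quad y \in M_j.
\]
Since $M_i$, $M_j$, the tangent/normal bundles, and the corresponding orthogonal projections are definable, and since $M_i \subset M$ is bounded, $G_{ij}: M_j \to [0,+\infty]$ is itself definable by quantifier elimination in the o-minimal structure. By Parusi\'nski's corollary, $G_{ij}$ is finite on $M_j \setminus \partial M_j$.

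Now I would apply the \L{}ojasiewicz inequality to the definable function $G_{ij}$ and the definable nonnegative function $y \mapsto d(y, \partial M_j)$ on the bounded definable set $M_j$: since $G_{ij}$ is finite exactly where $d(\cdot, \partial M_j) > 0$, there exist $C_{ij}, \eta_{ij} > 0$ with
\[
G_{ij}(y) \le \frac{C_{ij}}{d(y, \partial M_j)^{\eta_{ij}}}
\]
for all $y \in M_j$ with $d(y, \partial M_j) > 0$. The case $\partial M_j = \emptyset$ is handled directly: then $M_j$ is closed and bounded, hence compact, so $G_{ij}$ is uniformly bounded on $M_j$, matching the convention $d(\cdot, \emptyset) \equiv 1$. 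Finally, since the index set $I$ is finite, I would take $C := \max_{i,j} C_{ij}$ (enlarged to absorb the diameter of $M$ when $d(y, \partial M_j) > 1$) and $\eta := \max_{i,j} \eta_{ij}$ to obtain uniform constants valid across all adjacent pairs.

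The main technical point lies in the \L{}ojasiewicz step, where polynomial boundedness is essential: it rules out sub-polynomial or super-polynomial blow-up of a definable function near a definable zero set, which is precisely the failure mechanism behind Parusi\'nski's counterexample in non-polynomially bounded o-minimal structures mentioned just before the proposition. A secondary subtlety is verifying the definability of $G_{ij}$, which relies on the boundedness of $M_i$ so that the supremum is over a definable bounded set and the first-order formula ``$G_{ij}(y) \le t$'' has a definable $(y,t)$-slice.
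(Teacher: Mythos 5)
Your proposal shares the paper's two ingredients---Parusi\'nski's Corollary 1.6 \cite{parusinski1988lipschitz} and a \L{}ojasiewicz inequality---but the way you combine them leaves a genuine gap at the \L{}ojasiewicz step. You define $G_{ij}(y)$ as a supremum over $x \in M_i$ and then invoke \cite[Theorem 6.4]{bierstone1988semianalytic}, but that theorem (like Lemma \ref{scale of vector field} in the paper, which packages the same argument) requires the function to be \emph{continuous}. Your $G_{ij}$ is a pointwise supremum of a family of continuous functions of $y$, hence only lower semicontinuous in general, and you give no argument that it is continuous. Worse, the only fact you extract from Parusi\'nski's corollary is that $G_{ij}$ is pointwise finite on $M_j$; for a merely lower semicontinuous definable function this does not even imply local boundedness---take $g(x)=1/|x|$ for $x\neq 0$ and $g(0)=0$ on an interval: definable, pointwise finite, lower semicontinuous, but not locally bounded at the interior point $0$. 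So the hypotheses of the \L{}ojasiewicz inequality you want to apply are simply not verified, and the step ``$G_{ij}(y)\le C_{ij}/d(y,\partial M_j)^{\eta_{ij}}$'' is not justified as written.

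The fix is to use \cite[Corollary 1.6]{parusinski1988lipschitz} at full strength: the corollary gives the \emph{quantitative} bound $|P_{N_{M_i}(x)}P_{T_{M_j}(y)}| \le C\, d(y, M^k)^{-1}|x-y|$, where $M^k$ is the union of all strata of dimension at most $k:=\dim M_j - 1$ and $C$ is uniform in $x,y$ and in the adjacent pair $(i,j)$. This dominates $G_{ij}$ by the continuous definable function $y\mapsto C/d(y,M^k)$, so the continuity and local-boundedness problems disappear. All that remains is to trade the denominator $d(y,M^k)$ for a power of $d(y,\partial M_j)$: both distance functions are continuous, definable, and vanish on the same subset of $\overline{M_j}$ (namely $\partial M_j$), so a \L{}ojasiewicz comparison between the two supplies the required power. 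This is precisely the paper's proof of Proposition \ref{prop of lipschitz stratification}. Your $G_{ij}$-based formulation is a natural packaging, but it only becomes rigorous once you feed in the quantitative form of Parusi\'nski's estimate, at which point the argument reduces to the paper's.
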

\begin{proof}
    For any $k\in \llbracket 0, \mathrm{dim}(M)\rrbracket$, denote by $M^k$ the union of all the strata in $\{M_i\}_{i\in I}$ with dimension less than or equal to $k$. By \cite[Corollary 1.6]{parusinski1988lipschitz}, there exists $C>0$ such that for any pair $i,j\in I$ such that $M_j\subset \partial M_i$, $x\in M_i$, and $y\in M_j$, it holds that
        \[
     |P_{N_{M_i}(x)}P_{T_{M_j}(y)}|\le \frac{C}{d(y,M^k)}|x-y|
    \]
    where $k:= \mathrm{dim}(M_j) - 1$. By the definition of stratification, $\partial M_j \subset M^k$. If $M^k = \emptyset$, then the desired inequality holds. Otherwise, as $M$ is bounded and the strata are definable, and for any $y\in\overline{M_j}$, \begin{align*}
        d(y,M^k)=0\implies y\in M^k\implies y\notin M_j\implies y\in \partial M_j\implies d(y,\partial M_j)=0.
    \end{align*} By the \L{}ojasiewicz inequality, we have 
    $
    d(y,M^k)\ge cd(y,\partial M_j)^\eta
    $
    for some $c>0,\eta\ge0$ and any $y\in{M_j}$. Conclusion of the proposition then follows by applying the same arguments to every pair of strata.
\end{proof}

Lipschitz stratifications impose strong metric constraints between strata, but they have minimal requirements on the strata themselves, except that the strata can be taken to be definable \emph{cells} \cite{van1996geometric,nguyen2016lipschitz}. Cells are smooth manifolds defined recursively by definable smooth functions, and it is well known that definable sets can be decomposed into cells \cite[4.2 Cell decomposition]{van1996geometric}. It is also of interest to study stratifications with strata that possess strong metric properties \cite{kurdyka1997subanalytic,pawlucki2002decomposition,fischer2007minimal,pawlucki2008linear}. These stratifications have applications, including proving extension theorems \cite{kurdyka1997subanalytic} and analyzing gradient trajectories \cite{kurdyka2001quasi}. We next recall the definition of \emph{$L$-regular cells} \cite{fischer2007minimal,kurdyka2006subanalytic}, which are cells defined by Lipschitz definable functions.

\begin{definition}
The standard $L$-regular cells in $\mathbb{R}$ are precisely the open intervals and singletons.
Assume that standard $L$-regular cells in $\mathbb{R}^{n-1}$ have been defined, then a {standard $L$-regular cell} in $\mathbb{R}^{n}$ is one of the following forms:
\begin{align*}
        &\Gamma(\xi) := \{(x,y) \in B\times\mathbb{R} : y=\xi(x)\},
        \\&(\xi_1,\xi_2)_B := \{(x,y) \in B\times\mathbb{R} : \xi_1(x)<y<\xi_2(x)\},
        \\&(\xi,+\infty)_B := \{(x,y)\in B\times\mathbb{R} : y>\xi(x)\},
        \\&(-\infty,\xi)_B := \{(x,y)\in B\times\mathbb{R} : y<\xi(x)\},
    \end{align*}
    where $B\subset \mathbb R^{n-1}$ is a standard $L$-regular cell and $\xi,\xi_1,\xi_2:B\rightarrow \mathbb{R}$ are Lipschitz definable functions such that $\xi_1<\xi_2$.
\end{definition}
A set $M \subset \mathbb{R}^n$ is called an \emph{$L$-regular cell} if there is a linear orthogonal homeomorphism $\phi: \mathbb{R}^n \to \mathbb{R}^n$ such that $\phi(M)$ is a standard $L$-regular cell. An important property of $L$-regular cells is that they are quasiconvex \cite{kurdyka2006subanalytic,parusinski1994lipschitz}. Recall that a set $M\subset \mathbb{R}^n$ is \emph{$C$-quasiconvex} if for any $x,y\in M$, there exists a rectifiable arc $\gamma$ in $M$ connecting $x$ and $y$ with length at most $C|x-y|$ \cite[Appendix A]{gromov1999metric}. We say that a set $M\subset \mathbb{R}^n$ is \emph{quasiconvex} if there exists $C>0$ such that $M$ is $C$-quasiconvex. By \cite[Lemma 1.1]{kurdyka2001quasi} (see also \cite[Proposition 8]{kurdyka2006subanalytic}), any $L$-regular cell $M\subset \mathbb{R}^n$ is quasiconvex, with a constant $C$ that only depends on the dimension $n$ and Lipschitz constants of the defining functions.

A useful consequence of quasiconvexity is that any smooth function defined on an $L$-regular cell is Lipschitz if the function has a bounded derivative. It is natural to wonder what can be said for smooth functions defined on it, with potentially unbounded derivatives. Let $f$ be a definable smooth function defined on a bounded $L$-regular cell $M$. It is easy to see that there exist $c,\theta>0$ such that
\[
    |D f(x)| \le \frac{c}{d(x,\partial M)^\theta}
\]
for all $x\in M$. We will provide a proof of this simple fact later in Lemma \ref{scale of vector field}. Our goal is to demonstrate that a similar estimate holds for the Lipschitz modulus of $f$. In other words, we would like to prove that
\[
|f(x) - f(y)| \le \frac{c}{d(\{x,y\},\partial M)^\theta}|x-y|
\]
for all $x,y\in M$, with possibly different constants $c,\theta>0$. To achieve this, we show that points in an $L$-regular cell can be connected by arcs that do not come too close to the frontier of the cell. In fact, we prove a stronger statement (Proposition \ref{prop: L' regular}) that this property holds for any set that is bi-Lipschitz homeomorphic to an $L$-regular cell. Let $A\subset \mathbb{R}^n$ and $B\subset \mathbb{R}^m$, recall that a homeomorphism $\phi:A\to B$ is bi-Lipschitz if there exists $\overline{L}>0$ such that
\begin{equation*}
  \frac{1}{\overline{L}}|x-y| \le |\phi(x) - \phi(y)| \le \overline{L}|x-y|
\end{equation*}
for all $x,y\in A$. Clearly, if $\phi$ is bi-Lipschitz, then so is $\phi^{-1}$. We say a set $M\subset \mathbb{R}^n$ is \emph{$L'$-regular} if there is a bi-Lipschitz homeomorphism $\phi: M \to \mathbb{R}^m$ such that $\phi(M)\subset \mathbb{R}^m$ is an $L$-regular cell, or equivalently, a standard $L$-regular cell.
Proposition \ref{prop: L' regular} will be used in Section \ref{sec:decomposition} to estimate Lipschitz constants of Riemannian gradients.
\begin{proposition}
    \label{prop: L' regular}
     Let $M\subset \mathbb{R}^n$ be bounded and $L'$-regular. Then there exist $\varrho,C>0,\theta\ge 1$ such that for any $t\in(0,1]$, there exists an $L'$-regular $C$-quasiconvex $M(t)$ such that 
\[
M\setminus B(\partial M,t)\subset M(t)\subset M\setminus B(\partial M,\varrho t^{\theta}).
\]
\end{proposition}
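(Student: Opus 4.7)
The plan is to reduce to standard $L$-regular cells via bi-Lipschitz equivalence, and then proceed by induction on the ambient dimension $n$. Since $M$ is $L'$-regular, there exists a bi-Lipschitz homeomorphism $\phi$ of $\mathbb{R}^n$ with $\phi(M)$ a standard $L$-regular cell $N$. Bi-Lipschitz maps distort distances only by a constant factor and preserve quasiconvexity up to a constant, so once a shrinkage $N(s)$ satisfying the claim has been constructed for $N$, setting $M(t) := \phi^{-1}(N(s))$ with $s$ a constant multiple of $t$ yields an $L'$-regular, uniformly $C$-quasiconvex $M(t)$ with the desired nesting (for suitably adjusted $\varrho, \theta$). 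It therefore suffices to treat standard $L$-regular cells, with the strengthened inductive conclusion that the shrinkage is itself a standard $L$-regular cell.

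For the induction, the base case $n=1$ is elementary: singletons are unchanged, and an open interval $(a,b)$ admits $M(t):=(a+t,b-t)$ for $t\le(b-a)/2$ (and $M(t):=\emptyset$ otherwise), with $\varrho=\theta=C=1$. For the inductive step I would focus on the representative form $M=(\xi_1,\xi_2)_B$; the remaining forms (graph, $(\xi,+\infty)_B$, $(-\infty,\xi)_B$) are treated analogously. By induction applied to $B\subset\mathbb{R}^{n-1}$, there is a shrinkage $B(s)$ that is itself a standard $L$-regular cell. I then define
\[
M(t) := \{(x,y)\in\mathbb{R}^{n-1}\times\mathbb{R} : x\in B(s),\ \xi_1(x)+r<y<\xi_2(x)-r\}
\]
for $s,r$ chosen as suitable functions of $t$. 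Because $B(s)$ is standard $L$-regular and $\xi_i\pm r$ retain the Lipschitz constant of $\xi_i$, $M(t)$ is again a standard $L$-regular cell, and this gives uniform $C$-quasiconvexity with $C$ depending only on $n$ and the Lipschitz constants, directly by \cite[Lemma 1.1]{kurdyka2001quasi}. This is what makes the induction clean and avoids the otherwise delicate issue of controlling bi-Lipschitz constants uniformly in $t$.

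The distance verification rests on the two-sided estimate
\[
c_0\min\{d(x,\partial B),\,|y-\xi_1(x)|,\,|y-\xi_2(x)|\}\ \le\ d((x,y),\partial M)\ \le\ C_0\min\{d(x,\partial B),\,|y-\xi_1(x)|,\,|y-\xi_2(x)|\}
\]
for $(x,y)\in M$, with $c_0,C_0>0$ depending only on the Lipschitz constants of $\xi_i$. The upper bound comes from projecting onto the graphs $\Gamma(\xi_i)$ and onto the side of $\partial M$ above $\partial B$ (using that $\xi_i$ extend continuously to $\overline{B}$); the lower bound is a short computation using the Lipschitz property. Combining this estimate with the inductive nesting $B\setminus B(\partial B,s)\subset B(s)\subset B\setminus B(\partial B,\varrho_B s^{\theta_B})$ and the choices $s,r$ taken as suitable (fractional) powers of $t$ yields the required inclusion $M\setminus B(\partial M,t)\subset M(t)\subset M\setminus B(\partial M,\varrho t^\theta)$.

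The main obstacle is fiber positivity: in order for $M(t)$ to be a valid cell one needs $\xi_2(x)-\xi_1(x)>2r$ for every $x\in B(s)$. Since $\xi_1<\xi_2$ on $B$ but they may coincide on $\partial B$, the zero locus $Z:=\{x\in\overline{B}:\xi_2(x)=\xi_1(x)\}$ is entirely contained in $\partial B$. Applying the \L{}ojasiewicz inequality to the definable Lipschitz function $\xi_2-\xi_1$ yields constants $c,\eta>0$ with $\xi_2(x)-\xi_1(x)\ge c\,d(x,Z)^\eta\ge c\,d(x,\partial B)^\eta$, so the fiber positivity constraint is satisfiable by taking $r$ proportional to an appropriate power of $s$. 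This forces $\theta$ to compound by a factor involving $\eta$ at each inductive step, but the resulting $\theta$ remains a fixed positive constant, which is all that the proposition demands. Pulling back via $\phi^{-1}$ then completes the proof for general bounded $L'$-regular $M$.
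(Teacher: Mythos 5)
Your proposal is correct and follows essentially the same route as the paper's proof: reduce to bounded standard $L$-regular cells via the bi-Lipschitz pullback, induct on ambient dimension, shrink the base by the inductive hypothesis and the fiber by a fixed margin, and invoke the \L{}ojasiewicz inequality on $\xi_2-\xi_1$ to guarantee fiber positivity. The only cosmetic differences are that the paper also spells out the (easier) graph case $M=\Gamma(\xi)$ and does the distance comparison directly rather than packaging it as a two-sided min-estimate, but the key ideas and the dependence of $\theta$ on the \L{}ojasiewicz exponent at each step are identical.
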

\begin{proof}
We first prove the case where $M\subset \mathbb{R}^n$ is a bounded standard $L$-regular cell, and then show that all the desired properties are preserved under bi-Lipschitz homeomorphisms. Recall that an $L$-regular cell has constant $C>0$ if the norms of derivatives of all the defining functions of the cell are bounded by $C$ \cite{kurdyka2006subanalytic,kurdyka2001quasi}. If an $L$-regular cell $M\subset \mathbb{R}^n$ has constant $C$, then it is $(C+1)^{n-1}$-quasiconvex \cite[Lemma 1.1]{kurdyka2001quasi}. For bounded standard $L$-regular cells, we prove the following stronger claim.

Claim: \emph{Given a bounded standard $L$-regular cell $M\subset \mathbb{R}^n$, there exist $\varrho,C>0,\theta\ge1$ such that for any $t\in(0,1]$, there exists a standard $L$-regular cell $M(t)$ with constant $C$ such that }
\begin{equation}\label{eq:inclusion}
    M\setminus B(\partial M,t)\subset M(t)\subset M\setminus B(\partial M,\varrho t^{\theta}).
\end{equation}

We prove the above claim by an induction on the dimension $n$. The claim is clearly true for $n=1$. Assume that the claim holds for $n = 1,\ldots,N$, we will then show that the claim holds for $n = N+1$. Let $M\subset \mathbb{R}^{N+1}$ be a bounded standard $L$-regular cell. Denote by $\pi:\mathbb{R}^{N+1}\to \mathbb{R}^N$ the canonical projection. We have that $T:= \pi(M)\subset \mathbb{R}^N$ is a bounded standard $L$-regular cell, and either $M = \Gamma(\xi)$ or $M = (\xi_1,\xi_2)_T$ where $\xi,\xi_1,\xi_2:T\rightarrow \mathbb{R}$ are $L_0$-Lipschitz definable functions with $\xi_1<\xi_2$ and some $L_0>0$. Note that these functions can be extended continuously to $\overline{T}$.

By the inductive hypothesis, there exist $\varrho',C'>0$ and $\theta' \ge 1$ such that for any $t\in(0,1]$, there exists a standard $L$-regular cell $T(t)$ with constant $C'$ such that 
\begin{equation}\label{eq:T_inductive}
  T\setminus B(\partial T,t)\subset T(t)\subset T\setminus B(\partial T,\varrho' t^{\theta'}).  
\end{equation}

We will construct desired standard $L$-regular cells $M(t)$ based on $T(t)$. If $M = \Gamma(\xi)$, then $\partial M = \Gamma(\xi|_{\partial T})$. In this case, we may let 

\[M(t):= \left\{(y',\xi(y')): y'\in T\left(t/\sqrt{2+L_0^2}\right)\right\}\]
for any $t\in (0,1]$, which is a standard $L$-regular cell. Moreover, $M(t)$ is a standard $L$-regular cell with constant $C = \max\{C',L_0\}$. Fix any such $t$, it remains to show that $M(t)$ satisfies the desired inclusion \eqref{eq:inclusion}. On one hand, for any $(x',\xi(x'))\in M\setminus B(\partial M,t)$, it holds that
\begin{align*}
    t^2 \le d((x',\xi(x')),\partial M)^2 &= \inf_{(y',\xi(y'))\in \partial M} |x' - y'|^2 + |\xi(x') - \xi(y')|^2\\
    &\le \inf_{y'\in \partial T} |x' - y'|^2 + L_0^2 |x' - y'|^2\\
    &= (1+L_0^2) d(x',\partial T)^2< (2+L_0^2) d(x',\partial T)^2.
\end{align*}
Thus, by \eqref{eq:T_inductive},  $x'\in T\setminus B(\partial T,t/\sqrt{2+L_0^2}) \subset T(t/\sqrt{2+L_0^2})$ and $(x',\xi(x'))\in M(t)$. On the other hand, for any $(y',\xi(y'))\in M(t)$, it holds that $$d((y',\xi(y')),\partial M) \ge d(y', \partial T) \ge \varrho'\left(t/\sqrt{2+L_0^2}\right)^{\theta'}.$$ Clearly $(y',\xi(y'))\in \Gamma(\xi) = M$. Therefore, the desired inclusion \eqref{eq:inclusion} holds by letting $\theta = \theta'$ and choosing a small enough $\varrho>0$.

We next consider the case where $M = (\xi_1,\xi_2)_T$. In this case, the frontier of $M$ is given by
\begin{align}\label{eq:partial M}
    \partial M= \Gamma(\xi_1)\cup \Gamma(\xi_2)\cup \underbrace{\{(x',x_{N+1}):x'\in \partial T, x_{N+1} \in [\xi_1(x'), \xi_2(x')]\}}_{=:    M^\sharp}.
\end{align}
Since $\xi_2>\xi_1$ on $T$, by the \L{}ojasiewicz inequality, there exist $\kappa\ge1,c>0$ such that $\xi_2(x') - \xi_1(x') > c d(x',\partial T)^\kappa$ for all $x'\in T$. Let 
\begin{equation}\label{eq:def_Mt}
    M(t):= \left\{(y',y_{N+1}): y'\in T\left(t/\sqrt{2+L_0^2}\right), y_{N+1}\in (\xi_1(y')+\beta(t),\xi_2(y')-\beta(t))\right\}
\end{equation}
for $t\in (0,1]$ where \[\beta(t):= \frac{c}{2}(\varrho')^\kappa \left(t/\sqrt{2+L_0^2}\right)^{\theta'\kappa}.\] After possibly reducing $c$, we assume that $\beta(t)<t$. We first show that $M(t)$ is a standard $L$-regular cell. Fix any $t$, it suffices to prove that $\xi_1+\beta(t)<\xi_2-\beta(t)$ on $T\left(t/\sqrt{2+L_0^2}\right)$. Indeed, for any $y'\in T\left(t/\sqrt{2+L_0^2}\right)$, we have
\begin{align*}
    \xi_2(y') - \xi_1(y') &> c d(x',\partial T)^\kappa\\
    &\ge c \left(\varrho' \left(t/\sqrt{2+L_0^2}\right)^{\theta'}\right)^\kappa\\
    &= c(\varrho')^\kappa \left(t/\sqrt{2+L_0^2}\right)^{\theta'\kappa}= 2\beta(t).
\end{align*}
Since \(\beta(t)\) is a scalar depending only on \(t\), it is constant with respect to the
variable \(y'\). Hence the derivatives of the defining functions \(\xi_1+\beta(t)\) and
\(\xi_2-\beta(t)\) are the same as those of \(\xi_1\) and \(\xi_2\), respectively. Therefore
\(M(t)\) is a standard $L$-regular cell with constant \(\max\{C',L_0\}\).

 We next show that $M(t)$ satisfies the desired inclusion \eqref{eq:inclusion}. Let $(y',y_{N+1}) \in M\setminus B(\partial M,t)$, we first prove that $y'\in T(t/\sqrt{2+L_0^2})$. By the decomposition of the frontier in \eqref{eq:partial M},
\begin{align}
    d((y',y_{N+1}),\partial M) &\le d((y',y_{N+1}),M^\sharp)\notag\\
    &= \inf\{\sqrt{|x' - y'|^2 + |x_{N+1} - y_{N+1}|^2}: x'\in \partial T, x_{N+1} \in [\xi_1(x'), \xi_2(x')]\}\notag\\
    &\le  \inf\{\sqrt{|x' - y'|^2 + |x_{N+1} - y_{N+1}|^2}: x'\in \partial T, x_{N+1} = P_{[\xi_1(x'), \xi_2(x')]}(y_{N+1})\}\notag\\
    &= \inf\{\sqrt{|x' - y'|^2 + d(y_{N+1},[\xi_1(x'), \xi_2(x')])^2}: x'\in \partial T\} \notag\\
    &\le \inf\{\sqrt{|x' - y'|^2 + L_0^2 |x' - y'|^2}: x'\in \partial T\}\label{eq:both_interval}\\
    &= \sqrt{1+L_0^2} d(y',\partial T),\notag
\end{align}
where \eqref{eq:both_interval} is due to the fact that $y_{N+1}\in (\xi_1(y'),\xi_2(y'))$ and that both $\xi_1$ and $\xi_2$ are $L_0$-Lipschitz. As $d((y',y_{N+1}),\partial M) \ge t$, it holds that $d(y',\partial T) >d((y',y_{N+1}),\partial M)/\sqrt{2+L_0^2} \ge t/\sqrt{2+L_0^2}$. Thus, $y'\in T\left(t/\sqrt{2+L_0^2}\right)$. In addition, $y_{N+1}\in (\xi_1(y')+\beta(t),\xi_2(y')-\beta(t))$ since 
\[
    |y_{N+1} - \xi_1(y')| \ge d((y',y_{N+1}),\Gamma(\xi_1)) \ge d((y',y_{N+1}),\partial M) \ge t>\beta(t)
\]
and $|y_{N+1} - \xi_2(y')|>\beta(t)$ by the same reasoning. Therefore, $ M\setminus B(\partial M,t)\subset M(t)$ by the definition of $M(t)$ in \eqref{eq:def_Mt}.

It remains to show that $M(t)\subset M\setminus B(\partial M,\varrho t^{\theta})$ for some $\varrho>0,\theta\ge1$. Let $(y',y_{N+1})\in M(t)$. By the definition of $M(t)$ in \eqref{eq:def_Mt}, $y'\in T$ and $y_{N+1}\in (\xi_1(y') + \beta(t), \xi_2(y') - \beta(t))$. Thus $(y',y_{N+1})\in M$. It remains to show that $(y',y_{N+1})$ is away from the frontier $\partial M$. Recall from \eqref{eq:partial M} that $\partial M$ can be decomposed into three parts (i.e., $\Gamma(\xi_1)$, $\Gamma(\xi_2)$, and $M^\sharp$). We will proceed by lower bounding the distance from $(y',y_{N+1})$ to all these components.

Let $(x',\xi_1(x'))\in \Gamma(\xi_1)$. It holds that
\[
\begin{aligned}
|(x',\xi_1(x'))-(y',y_{N+1})|^2
&= |x'-y'|^2 + |\xi_1(x')-y_{N+1}|^2 \\
&\ge |x'-y'|^2
   + \Bigl||\xi_1(y')-y_{N+1}|-|\xi_1(x')-\xi_1(y')|\Bigr|^2 \\
&\ge |x'-y'|^2
   + \bigl(\max\{\beta(t)-L_0|x'-y'|,0\}\bigr)^2,
\end{aligned}
\]
due to the triangular inequality and the fact that
\[|\xi_1(y')-y_{N+1}| \in [\beta(t),\infty)\quad\text{and}\quad|\xi_1(x')-\xi_1(y')|\in [0,L_0|x'-y'|].\]
Set
\[
a:=|x'-y'|,\qquad
\phi(a):=a^2+\bigl(\max\{\beta(t)-L_0 a,0\}\bigr)^2,\qquad a\ge 0.
\]
If \(0\le a\le \beta(t)/L_0\), then
\[
\phi(a)=a^2+(\beta(t)-L_0a)^2,
\]
whose minimum is \(\beta(t)^2/(1+L_0^2)\). If \(a\ge \beta(t)/L_0\), then
\[
\phi(a)=a^2\ge \frac{\beta(t)^2}{L_0^2}\ge \frac{\beta(t)^2}{1+L_0^2}.
\]
Hence
\[
\phi(a)\ge \frac{\beta(t)^2}{1+L_0^2}
\qquad\text{for all }a\ge 0.
\]
Therefore,
\[
d\bigl((y',y_{N+1}),\Gamma(\xi_1)\bigr)\ge \frac{\beta(t)}{\sqrt{1+L_0^2}} = \frac{c}{2\sqrt{1+L_0^2}}(\varrho')^\kappa \left(t/\sqrt{2+L_0^2}\right)^{\theta'\kappa}.
\]
The same argument yields
\[
d\bigl((y',y_{N+1}),\Gamma(\xi_2)\bigr)\ge \frac{c}{2\sqrt{1+L_0^2}}(\varrho')^\kappa \left(t/\sqrt{2+L_0^2}\right)^{\theta'\kappa}.
\]
Finally, by the definition of $M^\sharp$ in \eqref{eq:partial M},
\[d\bigl((y',y_{N+1}),M^\sharp\bigr)\ge d\bigl(y',\partial T\bigr) \ge \varrho' \left(t/\sqrt{2+L_0^2}\right)^{\theta'}.\]
Combining the above bounds, there exists $\varrho>0$ such that
\[d\bigl((y',y_{N+1}),\partial M\bigr)>\varrho t^{\theta},\qquad\theta:=  \theta'\kappa \ge 1.\]
Thus $(y',y_{N+1})\in M\setminus B(\partial M,\varrho t^{\theta})$. This concludes the induction of the desired claim for standard
\(L\)-regular cells.

We proceed to show that both quasiconvexity and the desired inclusions are preserved by bi-Lipschitz homeomorphisms. Let $M\subset \mathbb{R}^n$ be a bounded $L'$-regular set. By the definition of \(L'\)-regular sets, there exists a bounded standard \(L\)-regular cell \(N\) and a bi-Lipschitz homeomorphism $\phi:N\rightarrow M$ with 
\begin{equation}\label{eq:biLip}
  \frac{1}{\overline{L}}|x-y| \le |\phi(x) - \phi(y)| \le \overline{L}|x-y|
\end{equation}
for all $x,y\in N$ and some $\overline{L}>0$. As $\phi$
 is bi-Lipschitz, it can be extended to a bi-Lipschitz homeomorphism $\phi:\overline{N}\rightarrow \overline{M}$. Thus, \(\partial M = \phi(\partial N)\). 

By the proved claim, there are \(\varrho',C>0\), $\theta\ge 1$ so that for each \(s\in(0,1]\), there exists a standard \(L\)-regular \(C\)-quasiconvex cell $N(s)$ with
\[
N\setminus B(\partial N,s) \subset N(s) \subset N\setminus B(\partial N,\varrho' s^{\theta}).
\]
Now for any \(t\in(0,1]\), we will show that the conclusion of the proposition holds with $M(t) := \phi\bigl(N(t/(2\overline{L}))\bigr)$. Since \(\phi\) is a bi-Lipschitz homeomorphism, \(M(t)\) is \(L'\)-regular. Applying the definition of bi-Lipschitz functions (i.e., equation \eqref{eq:biLip}) to arbitrary arcs in $N(t/(2\overline{L}))$, we have that $M(t)$ is $\overline{L}^2C$-quasiconvex. It remains to verify that the desired inclusions hold with \(\varrho := \varrho' (2\overline{L})^{-1-\theta}\).

On one hand, for any $x' = \phi(x)\in M\setminus B(\partial M,t)$ with $x\in N$, we have

\[
d(x,\partial N)
 = \inf_{y\in\partial N}|x-y|
 \ge \frac{1}{\overline{L}} \inf_{y'\in\partial M}|x'-y'|
 = \frac{d(x',\partial M)}{\overline{L}}
 \ge \frac{t}{\overline{L}},
\]
so \(x\in N\setminus B(\partial N,t/(2\overline{L}))\subset N(t/(2\overline{L}))\) and hence \(x'\in M(t)\).

On the other hand, let \(x'\in M(t)\). Since $M(t) = \phi\bigl(N(t/(2\overline{L}))\bigr)\subset \phi(N) = M$, it suffices to show that $x'\not\in B(\partial M,\varrho t^\theta)$. Let \(x:=\phi^{-1}(x')\in N(t/(2\overline{L}))\).  Then $d(x,\partial N)
 \ge \varrho'(t/(2\overline{L}))^{\theta}$,
and therefore
\[
d(x',\partial M)
 = \inf_{y'\in\partial M}|x'-y'|
 \ge \frac{1}{\overline{L}} \inf_{y\in\partial N}|x-y|
 \ge \frac{1}{\overline{L}} \varrho'\Bigl(\frac{t}{2\overline{L}} \Bigr)^{\theta}>\varrho t^\theta.
\]
It follows that \(x'\notin B(\partial M,\varrho t^{\theta})\) and $M(t) \subset M\setminus B(\partial M,\varrho t^{\theta})$. This completes the proof of the proposition.
\end{proof}

A stratification $\mathcal{M}$ is $L$-regular (resp. $L'$-regular) if every stratum $M\in \mathcal{M}$ is $L$-regular (resp. $L'$-regular). Definable sets admit $L$-regular stratifications, as shown in \cite{fischer2007minimal,kurdyka2006subanalytic}. In the next section, we aim to decompose definable functions into smooth pieces with controlled Lipschitz modulus of their Riemannian gradients. To achieve this, we require a stratification of definable sets that is both Lipschitz and $L$-regular. Fortunately, this is possible by applying $L$-regular refinements to the constructive proof of Lipschitz stratification in \cite{nguyen2016lipschitz} (see also \cite{parusinski1994lipschitz}). 
\begin{theorem}\label{lipschitz stratification}
    Let $X$ be a compact definable subset of $\mathbb{R}^n$ and $X_1,\dots,X_l$ be definable subsets of $X$. For any integer $m\ge 1$, there exists a Lipschitz $L$-regular $C^m$ definable stratification of $X$ compatible with $X_1,\dots,X_l$.
\end{theorem}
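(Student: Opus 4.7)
The plan is to adapt the constructive proof of the Lipschitz stratification theorem of Nguyen and Valette \cite{nguyen2016lipschitz} (which extends Parusiński \cite{parusinski1994lipschitz} to polynomially bounded o-minimal structures), by interleaving $L$-regular refinements at every inductive step. Both existence theorems -- Lipschitz stratification and $L$-regular cell decomposition -- proceed by induction on the ambient dimension via essentially the same scheme: one chooses generic coordinates on $\mathbb{R}^n$, projects onto an $(n-1)$-dimensional subspace, inductively decomposes the image, and lifts the decomposition via graphs or bands of definable Lipschitz functions over the base.

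The first step is to invoke Nguyen and Valette's construction to produce a Lipschitz stratification of the compact definable set $X$, compatible with $X_1,\dots,X_l$. The second step is, at each dimension of the inductive construction, to simultaneously arrange for the defining functions of the cells to come from an $L$-regular cell decomposition, available in any o-minimal structure by \cite{fischer2007minimal,kurdyka2006subanalytic}, and made compatible with $X_1,\dots,X_l$ together with all strata already produced at higher dimensions. The required coordinate system at each step can be chosen to satisfy both constructions at once, because the set of ``bad'' directions that obstruct either construction is a proper definable (in fact lower-dimensional) subset of the Grassmannian, so a common generic choice always exists. Taking a common refinement of the two cell decompositions in these shared coordinates yields strata that are $L$-regular and are simultaneously defined by the graph/band structure used in the Lipschitz lifting.

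The main obstacle is to verify that the $L$-regular refinement preserves Mostowski's chain condition, recalled in Appendix \ref{sec:Lipschitz}. This condition concerns the behavior of chains of strata under orthogonal projections onto normal spaces, with Lipschitz bounds depending on distances in the ambient set. Since subdividing a Lipschitz stratum by compatible $L$-regular sub-cells does not create new limit points nor new frontier incidences beyond those already present in the original decomposition, the chain estimates of Nguyen and Valette transfer to the refined decomposition after only cosmetic modifications. Compatibility with $X_1,\dots,X_l$ is automatic because $L$-regular cell decomposition can always be taken compatible with any finite family of definable sets. Combining these observations yields a Lipschitz $L$-regular stratification of $X$ compatible with $X_1,\dots,X_l$, as desired.
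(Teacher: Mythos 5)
The proposal correctly identifies the strategy (interleave $L$-regular refinements into Nguyen--Valette's inductive construction) but contains a genuine gap at exactly the point where the real technical work lies. You claim that ``subdividing a Lipschitz stratum by compatible $L$-regular sub-cells does not create new limit points nor new frontier incidences beyond those already present in the original decomposition'' and therefore the chain estimates ``transfer\dots after only cosmetic modifications.'' The first claim is false: subdividing a $d$-dimensional stratum $M$ into an open piece $M'$ together with a lower-dimensional piece $M''\subset\partial M'$ creates new frontier incidences that were not present before, and Mostowski's conditions must then be verified along the new $c$-chains passing through $M''$. Moreover, refining a stratification changes the skeleta $X^{j-1}$ in Definition~\ref{def:LipschitzStrat}, and while the condition $|P_q-P_{q'}|\le C|q-q'|/d(q,X^{j-1})$ becomes weaker when $X^{j-1}$ grows, the product estimates on $|P^\perp_{q_{j_1}}P_{q_{j_2}}\cdots P_{q_{j_k}}|$ involve new chains and do not transfer automatically. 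It is precisely not a ``cosmetic'' matter: Lipschitz stratifications, unlike Whitney stratifications, are not preserved by arbitrary refinement.

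The paper's proof avoids verifying the chain conditions directly. It uses the equivalent characterization of Lipschitz stratifications via extension of $\Sigma$-compatible Lipschitz vector fields (Proposition~\ref{prop:Lipschitz_equivalent}, condition $(\star)$), which is the tractable formulation. In the first case it replaces the base stratification $\mathcal{B}$ of $\mathbb{R}^k$ by an $L$-regular one using \cite[Theorem 1.4]{fischer2007minimal} \emph{before} lifting along the Lipschitz graphs $\Gamma(\xi_i)$, so that all top-dimensional strata are automatically $L$-regular; only the skeleton $\mathcal{B}'_{k-1}$ is further refined, and that falls under the inductive hypothesis. In the general case, Paw\l{}ucki's decomposition of $X$ into towers of $L$-regular leaves guarantees that the top-dimensional pieces are $L$-regular from the start. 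Your sketch has the right inductive shell, but you should replace the chain-condition argument with the vector-field criterion $(\star)$ and show, as the paper does, that $L$-regularity of the top-dimensional strata is achieved by construction (via Fischer's cell decomposition and Paw\l{}ucki's towers) rather than by post-hoc subdivision.
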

The proof of Theorem \ref{lipschitz stratification} is similar to that of \cite[Theorem 2.6]{nguyen2016lipschitz}. One only needs to begin
with the inductive assumption that every definable set of dimension < dim $X$ admits
a Lipschitz $L$-regular stratification; from that point on, the argument follows word for
word. We therefore omit the details.
\section{Piecewise smooth decomposition of definable functions}\label{sec:decomposition}
In this section, we present a decomposition of locally Lipschitz definable functions, building upon the stratifications of definable sets developed in the previous section. We first show that the domain of a locally Lipschitz definable function can be stratified so that we may estimate Lipschitz constants of Riemannian gradients on each stratum, and relate them to the subdifferential (Proposition \ref{prop:function_Lipschitz}). We also construct neighborhoods of these strata, where we provide additional estimates for the projection maps (Proposition \ref{proposition:neighborhoods of cells}). These results lay the groundwork for the study of subgradient sequences in Section \ref{sec:main_proof}.

We start this section by proving a useful lemma which controls how continuous definable maps blow up near boundaries of their domains. This is a simple application of the Łojasiewicz inequality.

\begin{lemma}\label{scale of vector field}
Let $M\subset \mathbb{R}^n$ be bounded and $V:M \rightarrow \mathbb{R}^m$ be continuous and definable. Then there exist $c>0,\theta \ge0$ such that $|V(x)| \le c/d(x,\partial M)^\theta$ for all $x\in M$.
\end{lemma}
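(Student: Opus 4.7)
The plan is to reduce the bound to the \L{}ojasiewicz inequality on the compact definable set $\overline{M}$, applied to a carefully constructed auxiliary function that controls $|V|$ from above while vanishing precisely on $\partial M$.

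First I would define $\psi:\overline{M}\to[0,\infty)$ by
\[
\psi(x):=\begin{cases}\min\bigl\{d(x,\partial M),\ 1/(1+|V(x)|^{2})\bigr\} & x\in M,\\ 0 & x\in\partial M,\end{cases}
\]
and verify three properties: (i) $\psi$ is definable, being built from $V$, $d(\cdot,\partial M)$, $M$, and the minimum of definable functions; (ii) $\psi$ is continuous (clear on $M$; and at any $x_\star\in\partial M$, every approaching sequence $x_k\to x_\star$ satisfies $\psi(x_k)\le d(x_k,\partial M)\to 0$, regardless of whether $x_k$ lies in $M$ or in $\partial M$); and (iii) $\{\psi=0\}=\partial M$, since both arguments of the minimum are strictly positive on $M$. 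Taking the minimum with $d(\cdot,\partial M)$ is precisely what simultaneously enforces continuity at the boundary and preserves the correct zero set; this is the main technical point of the argument.

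Next I would invoke the \L{}ojasiewicz inequality, valid in polynomially bounded o-minimal structures. Since $M$ is bounded, $\overline{M}$ is a compact definable set, and both $\psi$ and $d(\cdot,\partial M)$ are continuous definable functions on $\overline{M}$ sharing the zero set $\partial M$. Hence there exist $c_0,N>0$ such that $d(x,\partial M)^{N}\le c_0\,\psi(x)$ for all $x\in\overline{M}$. For $x\in M$, combining this with $\psi(x)\le 1/(1+|V(x)|^{2})$ gives $(1+|V(x)|^{2})\,d(x,\partial M)^{N}\le c_0$, which yields $|V(x)|\le \sqrt{c_0}\,d(x,\partial M)^{-N/2}$. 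Setting $c:=\sqrt{c_0}$ and $\theta:=N/2$ completes the proof. The degenerate case $\partial M=\emptyset$, in which $M=\overline{M}$ is already compact, reduces to boundedness of $V$ by continuity alone.
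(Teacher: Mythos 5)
Your proof is correct and follows essentially the same strategy as the paper: construct an auxiliary definable function comparable to $1/|V|$ and apply the \L{}ojasiewicz inequality against $d(\cdot,\partial M)$ (the paper uses $Q(x)=\min\{1,1/|V(x)|\}$ directly on $M$, while you extend to $\overline{M}$ by taking a minimum with $d(\cdot,\partial M)$ so that the compact-set form of the inequality applies verbatim). One small imprecision: your claim (iii) asserts $\{\psi=0\}=\partial M$ on the grounds that $d(\cdot,\partial M)>0$ on $M$, but $\partial M=\overline{M}\setminus M$ need not be closed, so $d(x,\partial M)$ can vanish at points of $M$ and $\{\psi=0\}$ is in general $\overline{\partial M}\cap\overline{M}$; this does not affect your argument, since the \L{}ojasiewicz inequality only needs $\{\psi=0\}\subset\{d(\cdot,\partial M)=0\}$, which does hold.
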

\begin{proof}
Consider the function $Q:M\rightarrow \mathbb{R}$ defined by $Q(x):= \min\{1,1/|V(x)|\}$ for all $x\in M$. Clearly, $Q$ is continuous definable and takes only positive values on $M$. By the Łojasiewicz inequality \cite[Theorem 6.4]{bierstone1988semianalytic}, there exist $c>0,\theta\ge0$ such that $Q(x)\ge d(x,\partial M)^{\theta}/c$ for all $x\in M$. This yields the desired inequality.
\end{proof}
In the following proposition, we consider a decomposition of locally Lipschitz definable functions induced by Lipschitz $L$-regular stratifications (Theorem \ref{lipschitz stratification}) of their graphs. When projected to the domain of the function, this yields a stratification where Riemannian gradients and subdifferentials can be well controlled. These estimates are made possible due to Lemma \ref{scale of vector field} and the properties of $L'$-regular sets established in Proposition \ref{prop: L' regular}. A local version of the estimate in \eqref{eq:global_verdier} can be derived by considering a Verdier stratification of (epi)graphs, as discussed in \cite{bianchi2023stochastic} and \cite[Theorem 3.6]{davis2025active}. By leveraging the Lipschitz stratification, we obtain a stronger estimate that holds in any compact set.

\begin{proposition}\label{prop:function_Lipschitz}
     Let \( f: \mathbb{R}^n \to \mathbb{R} \) be locally Lipschitz definable and $U\subset \mathbb{R}^n$ be definable compact. Then for any integer $m \ge 1$, there exists a $C^m$ $L'$-regular stratification $\mathcal{M}$ of $U$ such that $f$ is $C^m$ on each $M\in \mathcal{M}$. In addition, there exist $L>0,\eta\ge0$ such that for any $M\in \mathcal{M}$, we have
     \begin{equation}\label{eq:Rie_grad_Lip}
         |\nabla_{M}f(x) - \nabla_{M}f(y)| \le \frac{L}{d(\{x,y\},\partial M)^{\eta}} |x - y|
     \end{equation}
     for every $x,y\in M$, and
     \begin{equation}\label{eq:global_verdier}
 |P_{T_{M'}(y)}(v) - \nabla_{M'} f(y)| \le \frac{L}{d(y,\partial M')^{\eta}}|x - y|         
     \end{equation}
for every $x\in M$, $v\in \partial f(x)$, $M'\in \mathcal{M}$ with $M'\subset \partial M$, and $y\in M'$.
     
\end{proposition}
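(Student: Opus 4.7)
The plan is to lift the problem to the graph of $f$, where Theorem \ref{lipschitz stratification} applies directly. I consider the compact definable set $G := \{(x,f(x)) : x \in U\} \subset \mathbb{R}^{n+1}$ and apply Theorem \ref{lipschitz stratification} with regularity $C^m$ to obtain a Lipschitz $L$-regular stratification $\{G_i\}_{i \in I}$ of $G$. Since $f$ is $L_0$-Lipschitz on the compact set $U$, the first-factor projection $\pi:\mathbb{R}^{n+1}\to\mathbb{R}^n$ is bi-Lipschitz on $G$. I set $M_i := \pi(G_i)$, so that each $M_i$ is $L'$-regular (a bi-Lipschitz image of an $L$-regular cell), $f|_{M_i}$ is $C^m$, and the partition, connectedness, and frontier conditions transfer from $\{G_i\}$ to $\{M_i\}$ via $\pi$, yielding the desired stratification $\mathcal{M}$.

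For \eqref{eq:Rie_grad_Lip}, I would apply Lemma \ref{scale of vector field} to the ambient Jacobian of the continuous definable vector field $\nabla_M f$, giving constants $c_0, \theta_0 > 0$ with $|D(\nabla_M f)(z)| \le c_0/d(z,\partial M)^{\theta_0}$ on $M$. For $x,y \in M$, set $t := d(\{x,y\},\partial M)$ (possibly slightly reduced to accommodate the closed-ball convention). By Proposition \ref{prop: L' regular}, there is an $L'$-regular $C$-quasiconvex subset $M(t)$ containing both $x$ and $y$ on which $d(z,\partial M) \ge \varrho\, t^{\theta_1}$. Joining $x$ to $y$ by a rectifiable arc in $M(t)$ of length at most $C|x-y|$ and integrating the Jacobian bound along that arc yields \eqref{eq:Rie_grad_Lip}, with constants that are uniform over the finite stratification.

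For \eqref{eq:global_verdier}, the idea is to pull back the (w)-type estimate on $\{G_i\}$ from Proposition \ref{prop of lipschitz stratification} to the projected stratification. For $x \in M$ and $v \in \partial f(x)$, the projection formula $P_{T_M(x)}(v) = \nabla_M f(x)$, which is standard for locally Lipschitz definable functions smooth along a Whitney (a)-regular stratum, ensures that $(v,-1) \in N_{G_i}(\tilde{x})$, where $\tilde{x} := (x,f(x))$. Writing $\tilde{y} := (y,f(y))\in G_j$ (with $G_j$ the lift of $M'$), Proposition \ref{prop of lipschitz stratification} together with the self-adjoint identity $|P_{N_{G_i}(\tilde{x})} P_{T_{G_j}(\tilde{y})}| = |P_{T_{G_j}(\tilde{y})} P_{N_{G_i}(\tilde{x})}|$ gives $|P_{T_{G_j}(\tilde{y})}(v,-1)| \le C_0\, d(\tilde{y},\partial G_j)^{-\eta_0}\, |\tilde{x}-\tilde{y}|\cdot|(v,-1)|$. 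A short algebraic computation parametrizing $T_{G_j}(\tilde{y})$ by $T_{M'}(y)$ shows $|P_{T_{M'}(y)}(v) - \nabla_{M'} f(y)| \le (1 + |\nabla_{M'} f(y)|^2)\,|P_{T_{G_j}(\tilde{y})}(v,-1)|$. Combining this with $d(\tilde{y},\partial G_j) \ge d(y,\partial M')$, $|\tilde{x}-\tilde{y}| \le \sqrt{1+L_0^2}\,|x-y|$, and the blow-up estimate $|\nabla_{M'} f(y)| \le c\, d(y,\partial M')^{-\theta}$ from Lemma \ref{scale of vector field}, and absorbing all exponents into a single $\eta$, yields \eqref{eq:global_verdier}.

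The main obstacle is the last step of \eqref{eq:global_verdier}: the conversion factor $1 + |\nabla_{M'} f(y)|^2$ between the graph-level projection and the stratum-level projection degenerates near $\partial M'$, so the final exponent $\eta$ must absorb both the stratification-level exponent from Proposition \ref{prop of lipschitz stratification} and the Riemannian-gradient blow-up rate from Lemma \ref{scale of vector field}. A secondary delicacy is the projection formula itself, whose justification in this setting relies on Rademacher's theorem combined with Whitney's condition (a) holding across every pair of adjacent strata.
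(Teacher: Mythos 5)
Your proposal follows essentially the same route as the paper: lift to the graph $\Gamma(f|_U)$, apply Theorem \ref{lipschitz stratification} to obtain a Lipschitz $L$-regular stratification there, project down via the (bi-Lipschitz on the graph) coordinate projection, then for \eqref{eq:Rie_grad_Lip} combine the blow-up bound of Lemma \ref{scale of vector field} with the quasiconvex truncation from Proposition \ref{prop: L' regular}, and for \eqref{eq:global_verdier} use the fact that $(v,-1)\in N_{\Gamma(f|_M)}(x,f(x))$ together with Proposition \ref{prop of lipschitz stratification}. Both proofs are sound, and the structure is the same.

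One substantive misconception is worth flagging. You identify the ``main obstacle'' in \eqref{eq:global_verdier} as the factor $1+|\nabla_{M'}f(y)|^2$ degenerating near $\partial M'$, and you invoke Lemma \ref{scale of vector field} to control $|\nabla_{M'}f(y)|$ by $c\,d(y,\partial M')^{-\theta}$. But the Riemannian gradient of a locally Lipschitz function on a stratum is uniformly bounded by the Lipschitz constant: by \cite[Proposition 4]{bolte2007clarke} (which you already invoke for the projection formula) $\nabla_{M'}f(y)=P_{T_{M'}(y)}(v')$ for any $v'\in\partial f(y)$, and on compact $U$ every such $v'$ satisfies $|v'|\le L_0$, hence $|\nabla_{M'}f(y)|\le L_0$ with no blow-up at all. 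The paper's proof uses this directly in the bound $|(u,\langle\nabla_{M'}f(y),u\rangle)|\le\sqrt{1+L_0^2}\,|u|$ (see the step passing from \eqref{eq:ip_d} to \eqref{eq:ip_e}). Your argument still terminates with a valid inequality --- a bounded quantity is trivially dominated by a power of $d(y,\partial M')^{-1}$ --- but the ``obstacle'' you describe is illusory, and absorbing an artificial blow-up rate needlessly worsens the exponent $\eta$. The paper's treatment, bounding the lifted tangent vector's norm by the Lipschitz constant rather than Lemma \ref{scale of vector field}, is cleaner and should replace your degenerating-factor narrative. The rest of your argument (the adjoint identity $|P_N P_T|=|P_T P_N|$, the conversion between graph-level and stratum-level projections, the bi-Lipschitz distance comparisons) is correct and consistent with the paper's inner-product computation.
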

\begin{proof}
By Theorem~\ref{lipschitz stratification}, the graph \(\Gamma(f|_U)\subset\mathbb{R}^n\times\mathbb{R}\) admits a Lipschitz \(L\)-regular $C^m$ stratification $\mathcal{X}$ with $m\ge 2$. Denote by $\pi:\mathbb{R}^{n+1}\to\mathbb{R}^n$ the canonical projection. Since the stratification $\mathcal{X}$ is Lipschitz (and thus satisfies the Whitney's condition (a)) and $f|_U$ is Lipschitz, $\mathcal{M}:=\{\pi(X):X\in \mathcal{X}\}$ is a $C^m$ stratification of $U$ such that $f$ is $C^m$ on each $M\in \mathcal{M}$ \cite[p. 561]{bolte2007clarke}. $\mathcal{M}$ is also $L'$-regular as $\pi|_{\Gamma(f|_M)}$ is a bi-Lipschitz homeomorphism for each $M\in \mathcal{M}$.

We proceed to prove the two inequalities. Fix $M\in \mathcal M$.
By Proposition \ref{prop: L' regular}, there exist $\varrho,C>0,\theta\ge 1$ such that for any $t\in(0,1]$, there exists a $C$-quasiconvex $M(t)$ such that  
\begin{equation} \label{eq:Mt_Riem}
 M\setminus B(\partial M,t)\subset M(t)\subset M\setminus B(\partial M,\varrho t^{\theta}).   
\end{equation}
Set 
\[
\Delta:=\max\{1,\operatorname{diam}(M)\},\qquad
C_0:=\frac{\varrho}{\Delta^\theta}.
\]
Fix any $x,y\in M$ and let $\delta:= d(\{x,y\},\partial M)$. By the definition of diameter, 
$$\mathrm{diam}(M) = \mathrm{diam}(\overline M) \ge d(\{x,y\},\partial M).$$
Thus, $\Delta \ge \delta$. Applying \eqref{eq:Mt_Riem} with \(t=\delta/\Delta\in (0,1]\), we obtain an arc
\(\gamma:[0,1]\to M\) with $\gamma(0) = x$ and $\gamma(1) = y$ such that
\[
\operatorname{length}(\gamma):= \int_0^1 |\gamma'(t)|~dt\le C|x-y|
\]
and
\[
\gamma([0,1])\subset M\setminus B\bigl(\partial M,\varrho (\delta/\Delta)^\theta\bigr) = M\setminus B\bigl(\partial M,C_0\delta^\theta\bigr).
\]
Therefore, for any \(x,y\in M\), there exists an arc $\gamma$ with \(\gamma([0,1])\subset
M\setminus B(\partial M,C_0 d(\{x,y\},\partial M)^\theta)\) that connects \(x\) and
\(y\), with length no greater than \(C|x-y|\).

Denote by \(D_M(\nabla_M f)\) the tangential differential of the map
\(\nabla_M f:M\to\mathbb R^n\); see, for example,
\cite[Definition 3.34]{boumal2023introduction}.
Since \(|D_M(\nabla_M f)|\) is definable and continuous,
by Lemma~\ref{scale of vector field} there exist \(c>0\) and \(\theta'\ge0\) such that
\[
|D_M(\nabla_M f)(\bar x)|
\le \frac{c}{d(\bar x,\partial M)^{\theta'}}
\qquad\text{for all }\bar x\in M.
\]
Let \(\gamma:[0,1]\to M\) be the arc from above. Then, for a.e. \(s\in[0,1]\), by the definition of the differential,
\[
\frac{d}{ds}(\nabla_M f\circ\gamma)(s)
= D_M(\nabla_M f)(\gamma(s))[\dot\gamma(s)].
\]
Hence
\[
\begin{aligned}
|\nabla_M f(x)-\nabla_M f(y)|
&= \left| \int_0^1 \frac{d}{ds}(\nabla_M f\circ\gamma)(s)\,ds\right|\\
&\le \int_0^1 |D_M(\nabla_M f)(\gamma(s))|\,|\dot\gamma(s)|\,ds \\
&\le \sup_{s\in [0,1]} \frac{c}{d(\gamma(s),\partial M)^{\theta'}} \int_0^1 |\dot\gamma(s)|\,ds \\
&\le \frac{c}{\bigl(C_0\,d(\{x,y\},\partial M)^\theta\bigr)^{\theta'}}\,\operatorname{length}(\gamma) \\
&\le \frac{cC}{C_0^{\theta'}\,d(\{x,y\},\partial M)^{\theta\theta'}}\,|x-y|.
\end{aligned}
\]
Applying the same argument to each $M\in \mathcal M$ yields \eqref{eq:Rie_grad_Lip}.

We next prove \eqref{eq:global_verdier}. Fix $M,M'\in \mathcal{M}$ with $M'\subset \partial M$, $x\in M$, $y\in M'$, and $v\in \partial f(x)$. Denote by $X := \Gamma(f|_M)$ and $X':= \Gamma(f|_{M'})$. We first show that $(v,-1)\in N_{X}(x,f(x))$. Note that the tangent space of $X$ at $(x,f(x))$ is given by $T_{X}(x,f(x)) = \{(u,\langle\nabla_{M}f(x),u\rangle):u\in T_{M}(x)\}$. Thus for any $(u,\langle\nabla_{M}f(x),u\rangle)\in T_{X}(x,f(x))$, we have $\langle (v,-1),(u,\langle\nabla_{M}f(x),u\rangle)\rangle = \langle v-\nabla_{M}f(x),u\rangle = 0$ as $P_{T_M}(v) = \nabla_{M}f(x)$ by \cite[Proposition 4]{bolte2007clarke}.

Now let $u\in T_{M'}(y)$ be arbitrary and we have $(u,\langle\nabla_{M'}f(y),u\rangle)\in T_{X'}(y,f(y))$. Notice that
\begin{equation}\label{eq:ip_1}
 \begin{aligned}
    \langle (v,-1),\bigl(u, \langle\nabla_{M'}f(y),u\rangle\bigr)\rangle &= \langle v,u\rangle - \langle\nabla_{M'}f(y),u\rangle\\
    &= \langle v-\nabla_{M'}f(y),u\rangle\\
    &= \langle P_{T_{M'}(y)}(v)-\nabla_{M'}f(y),u\rangle.
\end{aligned}   
\end{equation}

As \(U\) is compact and \(f\) is locally Lipschitz, there exists \(L_0>0\) such that
\[
|f(x)-f(y)|\le L_0|x-y|\quad\text{and}\quad|v|\le L_0
\qquad\text{for all }x,y\in U\quad\text{and}\quad v\in \partial f(x).
\]
We seek to lower bound the inner product on the left hand side of \eqref{eq:ip_1}. It holds that
\begin{subequations}\label{eq:ip_2}
   \begin{align}
    \langle (v,-1),\bigl(u,\langle\nabla_{M'}f(y), u\rangle\bigr)\rangle &= \left\langle(v,-1),P_{N_{X}(x,f(x))}\bigl(u,\langle\nabla_{M'}f(y), u\rangle\bigr)\right\rangle\label{eq:ip_a}\\
    &\le \sqrt{L_0^2+1}|P_{N_{X}(x,f(x))}(u,\langle \nabla_{M'}f(y),u\rangle)|\label{eq:ip_b}\\
    &=\sqrt{L_0^2+1}|P_{N_{X}(x,f(x))}P_{T_{X'}(y,f(y))}(u,\langle \nabla_{M'}f(y),u\rangle)|\label{eq:ip_c}
    \\&\le \frac{C\sqrt{L_0^2+1}|(u,\langle \nabla_{M'}f(y),u\rangle)|}{d((y,f(y)),\partial X')^\eta} |(x,f(x))-(y,f(y))|\label{eq:ip_d}\\
  &\le \frac{C\sqrt{L_0^2+1}\sqrt{|u|^2+L_0^2|u|^2}}{d((y,f(y)),\Gamma(f|_{\partial M'}))^\eta}\times \sqrt{L_0^2+1}|x-y|\label{eq:ip_e}\\
  &\le \frac{C({L_0^2+1})^{3/2}|u|}{d(y,\partial M')^\eta} |x-y|.\label{eq:ip_f}
\end{align} 
\end{subequations}
for some $C>0,\eta\ge0$. Above, \eqref{eq:ip_a} follows from the fact $(v,-1)\in N_{X}(x,f(x))$; \eqref{eq:ip_c} is due to $(u,\langle\nabla_{M'}f(y),u\rangle)\in T_{X'}(y,f(y))$; \eqref{eq:ip_d} is a consequence of Proposition \ref{prop of lipschitz stratification}; We use the Lipschitz continuity of $f$ in \eqref{eq:ip_e}. Finally, the desired inequality \eqref{eq:global_verdier} follows by combining \eqref{eq:ip_1} and \eqref{eq:ip_2} and letting $u=(P_{T_{M'}(y)}(v)-\nabla_{M'}f(y))/|P_{T_{M'}(y)}(v)-\nabla_{M'}f(y)|\in T_{M'}(y)$.
\end{proof}
The remainder of this section concerns neighborhoods of the strata constructed in Proposition \ref{prop:function_Lipschitz}. In the proof of Theorem \ref{thm:diameter} (Section \ref{sec:main_proof}), our main strategy is to analyze projected subgradient sequences. Therefore, it is necessary to study the  projections onto the strata. By the celebrated tubular neighborhood theorem, each smooth manifold admits a neighborhood where the projection is well-defined \cite[Theorem 6.24]{lee2012introduction}. If restricted to a smaller region, the projection becomes Lipschitz continuous \cite[4.8 Theorem]{federer1959curvature} and smooth \cite[(4.1) Theorem]{dudek1994nonlinear}. Applying the definable version of the tubular neighborhood theorem \cite[Theorem 6.11]{coste2000introduction}, the following lemma quantifies the size of such a neighborhood for definable manifolds.

\begin{lemma}\label{lemma:xi}
    Let $M\subset \mathbb{R}^n$ be a nonempty bounded definable $C^3$ manifold, then for any $L>1$ there exist $r>0$ and $\eta\ge 1$ such that $P_M$ is $L$-Lipschitz and $C^{2}$ in 
    $\cup_{t\in (0,1]}B\left(M\setminus B(\partial M,t),rt^\eta\right)$.
\end{lemma}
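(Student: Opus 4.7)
The plan is to establish a quantitative tubular neighborhood theorem, combining the classical differential-geometric construction with the Łojasiewicz inequality to control the rate at which the local ``radius of good behaviour'' degenerates as one approaches $\partial M$.

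First I would invoke the classical tubular neighborhood theorem: since $M$ is a $C^3$ manifold in $\mathbb{R}^n$, at every $x \in M$ the normal-bundle map $(y,v) \mapsto y + v$ (with $v \in N_M(y)$) is a $C^2$-diffeomorphism from a neighborhood of the zero section onto a ball $B(x,s)$ for some $s > 0$, and the tangential component of its inverse is exactly $P_M$. Hence $P_M$ is $C^2$ on $B(x,s)$; since $DP_M$ at points of $M$ is the orthogonal projection onto $T_M$ and therefore has operator norm at most $1$, continuity allows one to shrink $s$ so that $|DP_M| \le L'$ on $B(x,s)$ for any fixed $L' \in (1, L)$. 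Define $\rho : M \to (0,1]$ to be the minimum of $1$ and the supremum of such $s$. Each defining condition is first-order over the o-minimal structure, so $\rho$ is a definable, strictly positive function on $M$.

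Since $M$ is bounded, Lemma \ref{scale of vector field} applied to $1/\rho$ yields $c > 0$ and $\eta \ge 1$ with $\rho(x) \ge c \cdot d(x, \partial M)^\eta$ for every $x \in M$ (using the convention $d(\cdot, \emptyset) = 1$ if $\partial M$ is empty). Set $r := c$. For every $t \in (0,1]$ and $x \in M \setminus B(\partial M, t)$, one has $B(x, rt^\eta) \subset B(x, \rho(x))$, so on $U := \bigcup_{t \in (0,1]} B(M \setminus B(\partial M, t), rt^\eta)$ the projection $P_M$ is uniquely defined, $C^2$, and satisfies $|DP_M| \le L'$ pointwise; uniqueness and $C^2$-regularity are local properties and therefore survive the union.

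The main obstacle is upgrading the pointwise derivative bound $|DP_M| \le L'$ to a global $L$-Lipschitz estimate on the non-convex set $U$. I would handle this by a chaining construction: after further shrinking $r$ (which does not affect the asymptotics of $rt^\eta$), any $y_1, y_2 \in U$ can be joined by a rectifiable path inside $U$ of length at most $(L/L')|y_1 - y_2|$, obtained by interpolating tangentially along $M$ near the projection points and then moving normally back to the $y_i$, the power-of-distance lower bound $\rho(x) \ge c \, d(x, \partial M)^\eta$ ensuring the tube is thick enough for such a path to remain in $U$. Integrating $DP_M$ along this path gives $|P_M(y_1) - P_M(y_2)| \le L|y_1 - y_2|$. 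This path construction inside the variable-thickness tube is the delicate geometric step; the remainder of the argument reduces to standard differential geometry together with a single application of the Łojasiewicz inequality.
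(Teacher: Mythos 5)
The idea of packaging the ``local radius of good behaviour'' as a definable function on $M$ and applying a \L{}ojasiewicz-type estimate (your Lemma \ref{scale of vector field}, essentially equivalent to the paper's argument with the auxiliary function $\xi(t):=\inf_{x\in \overline{M}\setminus \mathring{B}(\partial M,t)}\epsilon(x)$) is exactly the paper's route to the power bound $\rho(x) \gtrsim d(x,\partial M)^\eta$. The $C^2$-regularity step via the tubular neighborhood theorem and \cite{dudek1994nonlinear} also matches. So the first two thirds of your proposal track the paper closely.

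The genuine gap is the Lipschitz estimate. You obtain only the \emph{pointwise} derivative bound $|DP_M|\le L'$ and then assert a chaining construction: any $y_1,y_2\in U$ should be joined by a path in $U$ of length at most $(L/L')\,|y_1-y_2|$, with $L'$ taken close to $L$. This is a quasiconvexity claim with constant close to $1$ for the variable-thickness tube $U$, and it does not follow from the hypotheses. Lemma \ref{lemma:xi} only assumes $M$ is a bounded definable $C^3$ manifold; $M$ need not be an $L$-regular cell, and neither $M$ nor the tube around it need be quasiconvex with a constant that can be pushed toward $1$ (think of a definable arc that nearly closes up, or a ``cut annulus'': two points of $U$ near opposite sides of the gap are close in $\mathbb{R}^n$ but any path inside $U$ must travel around). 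The argument ``interpolate tangentially along $M$ and then move normally'' cannot in general produce a path only $(L/L')$ times longer than the chord, so integrating $|DP_M|\le L'$ along it would at best give a Lipschitz constant proportional to the quasiconvexity constant of $U$, not $L$.

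What the paper does instead is invoke Federer's reach estimate \cite[4.8 Theorem]{federer1959curvature} (equivalently \cite[Theorem 4.8]{clarke1995proximal}): within the shrunk tube $\bigcup_{x\in M} B\bigl(x,(L-1)\epsilon(x)/L\bigr)$ the projection is globally $L$-Lipschitz, with no path-connectedness or quasiconvexity of the tube required. This is the tool your argument is missing; it replaces the pointwise-derivative-plus-chaining strategy with a direct inequality between $|P_M(y_1)-P_M(y_2)|$ and $|y_1-y_2|$, valid whenever both points lie in a tube whose radius is a uniform fraction of the local reach. Once you have that, the rest of your proposal (definability of the radius, Lemma \ref{scale of vector field}, and the inclusion $B(M\setminus B(\partial M,t),rt^\eta)\subset\bigcup_x B(x,\epsilon(x))$) goes through and coincides with the paper's proof.
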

\begin{proof}
By the definable tubular neighborhood theorem \cite[Theorem 6.11]{coste2000introduction}, there is a $C^3$ definable $\epsilon:M\rightarrow(0,\infty)$ such that the projection onto $M$ (denoted by $P_M$) is single-valued and $C^2$ in $\cup_{x\in M}B(x,\epsilon(x))$. 
After possibly replacing \(\epsilon\) by \(\frac{L-1}{L}\epsilon\), which is still definable and
\(C^3\), we may moreover assume that
\[
\epsilon(x)\le \frac{L-1}{L}\operatorname{reach}(M,x)
\qquad\text{for all }x\in M,
\]
where \(\operatorname{reach}(M,x)\) denotes the pointwise reach in the sense of
\cite[4.1 Definition]{federer1959curvature}. Then, by \cite[Theorem~4.8(8)]{federer1959curvature},
\(P_M\) is \(L\)-Lipschitz on \(\bigcup_{x\in M} B(x,\epsilon(x))\).
 Since $\overline{M}\setminus\partial M = M \neq \emptyset$, there exists $\varrho\in (0,1]$ such that $\overline{M}\setminus \mathring{B}(\partial M,\varrho) \neq \emptyset$. Consider $\xi:[0,1]\rightarrow [0,\infty]$ defined by
    \[\xi(t):= \inf_{x\in \overline{M}\setminus \mathring{B}(\partial M,t)}\epsilon(x)\]
for $t\in (0,1]$, and $\xi(0) := \inf_{t\in (0,\varrho]}\xi(t)$. By continuity of $\epsilon$, $\xi$ is continuous and positive near $0$. Since $\xi$ is increasing on $(0,1]$, it is continuous at $0$ as well. In addition, it is definable since $\epsilon$ is definable. By the \L{}ojasiewicz's inequality \cite[Theorem 6.4]{bierstone1988semianalytic}, there exist $r>0$ and $\eta\ge 1$ such that $\xi(t) \ge r t^\eta$ for all $t\in [0,1]$. Therefore, we have $\epsilon(x)\ge \xi(t) \ge rt^\eta$ for all $x\in M\setminus B(\partial M,t)$ and $t\in [0,1]$. It follows that
\begin{equation*}
    B(M\setminus B(\partial M,t),rt^\eta)\subset \bigcup_{x\in M\setminus B(\partial M,t)}B(x,\epsilon(x)).
\end{equation*}
The conclusion then follows by taking union over $t\in (0,1]$ for both sides on the above inclusion.
\end{proof}
While the projection is Lipschitz in the neighborhood constructed in Lemma \ref{lemma:xi}, its derivative is not, as the second-order derivative blows up near the frontier of the manifold. Indeed, the same can be said for the Riemannian gradients, which are only locally Lipschitz on the manifold. This hinders one from applying classical arguments for analyzing gradient sequences \cite{absil2005convergence}, which generally require gradients to be Lipschitz. To overcome this hurdle, we propose to construct regions which exclude areas near the frontiers (see $\mathcal{N}_0(i,\alpha)$ defined in Proposition \ref{proposition:neighborhoods of cells}). In such regions, we may estimate Lipschitz constants of the aforementioned maps (Proposition \ref{proposition:neighborhoods of cells}). To this end, we need to study arc-connectivity of such regions, which is the object of the following lemma. Recall from Proposition \ref{prop:function_Lipschitz}, we may assume the strata are $L'$-regular.
\begin{lemma}\label{lem: connected away boundary in ball}
Let \(M\subset\mathbb{R}^n\) be a nonempty bounded definable \(L'\)-regular \(C^3\) manifold and let \(L>1\). Then there exist constants
\[
C>0,\qquad\varrho\in (0,1],\qquad \theta\ge 1,\qquad \overline{\eta}\ge \theta,\qquad \overline r\in \bigl(0,\min\{\varrho,1/3\}\bigr),
\]
such that the following holds:
\begin{enumerate}
    \item \(M\setminus B(\partial M,\varrho)\neq\emptyset\);
    \item For any \(t\in(0,1]\), \(\eta\in[\overline\eta,\infty)\), and \(r\in(0,\overline r]\), the projection \(P_M\) is \(L\)-Lipschitz and \(C^2\) in
\[
\bigcup_{t\in (0,1]} B\bigl(M\setminus B(\partial M,t),\,rt^\eta\bigr),
\]
and for any two points
\[
x,y\in B\bigl(M\setminus B(\partial M,t),\,rt^\eta\bigr),
\]
there exists an arc connecting \(x\) and \(y\) in
\[
B\bigl(M\setminus B(\partial M,\varrho t^\theta),\,rt^\eta\bigr)
\]
with length no greater than \(C|x-y|\);
\item Consequently, for any \(t\in(0,1]\), \(\eta\in[\overline\eta,\infty)\), and
\(0<r_1<r_2\le \overline r\), let
\[
\Omega:=\mathring{B}\bigl(M\setminus B(\partial M,\varrho t^\theta),\,r_2 t^\eta\bigr).
\]
If \(F:\Omega\to\mathbb R^m\) is a \(C^1\) definable map, then there exist constants
\(c>0\) and \(\omega\ge 0\) such that
\[
|F(x)-F(y)|\le \frac{c}{t^\omega}|x-y|
\qquad
\forall x,y\in B\bigl(M\setminus B(\partial M,t),\,r_1 t^\eta\bigr).
\]
\end{enumerate}
\end{lemma}

\begin{proof}
The proof is divided into four steps. In Step 0, we apply Proposition \ref{prop: L' regular} and Lemma \ref{lemma:xi} to obtain quasiconvex sets in the manifold and its tubular neighborhoods. We then prove items 1-3 of the lemma in Steps 1-3 respectively.

\medskip
\noindent\underline{Step 0: Auxiliary quasiconvex sets and tubular neighborhoods.}
By Proposition~\ref{prop: L' regular}, there exist constants \(C'>0\), \(\varrho'>0\), and
\(\theta\ge 1\) such that for every \(t\in(0,1]\), one can find a \(C'\)-quasiconvex set
\(M(t)\) satisfying
\[
M\setminus B(\partial M,t)\subset M(t)\subset M\setminus B(\partial M,\varrho' t^\theta).
\]

Fix \(L>1\). Applying Lemma~\ref{lemma:xi}, we obtain constants
\[
\overline r_0\in(0,1/3),\qquad \overline\eta\ge \theta,
\]
such that for every \(t\in(0,1]\), \(\eta\in[\overline\eta,\infty)\), and
\(r\in(0,\overline r_0]\), the projection
\[
P_M: \bigcup_{t\in (0,1]} B\bigl(M\setminus B(\partial M,t),\,rt^\eta\bigr)\to M
\]
is single-valued, \(L\)-Lipschitz, and \(C^2\).

\medskip
\noindent\underline{Step 1: Choice of \(\varrho\) and \(\overline r\), and proof of item 1.}
We now choose \(\varrho\in(0,1]\) so that
\[
M\setminus B(\partial M,\varrho)\neq\emptyset.
\]

If \(\partial M=\emptyset\), we simply set
\(
\varrho_0:=\varrho'/4^\theta
\),
so that
\[
M\setminus B(\partial M,\varrho_0)=M\neq\emptyset.
\]

Assume next that \(\partial M\neq\emptyset\). Since \(M\) is a nonempty manifold,
\(M\setminus \partial M\neq\emptyset\). Choose \(x_*\in M\setminus \partial M\). As
\(\partial M\) is closed and \(x_*\notin \partial M\), one has
\[
\delta_*:=d(x_*,\partial M)>0.
\]
Set
\[
\varrho_0:=\min\Bigl\{\frac{\varrho'}{4^\theta},\,\frac{\delta_*}{2}\Bigr\}.
\]
Then \(x_*\in M\setminus B(\partial M,\varrho_0)\), hence
\[
M\setminus B(\partial M,\varrho_0)\neq\emptyset.
\]

In either case, define
\[
\varrho:=\varrho_0,
\qquad
\overline r:=\min\Bigl\{\overline r_0,\frac{\varrho}{2}\Bigr\}\in \bigl(0,\min\{\varrho,1/3\}\bigr).
\]
This proves item~1.

\medskip
\noindent\underline{Step 2: Proof of item 2.}
Let \(t\in(0,1]\), \(\eta\in[\overline\eta,\infty)\), and \(r\in(0,\overline r]\). Since
\(\overline r\le \overline r_0\), Step~1 gives immediately that \(P_M\) is single-valued,
\(L\)-Lipschitz, and \(C^2\) in
\[
B\bigl(M\setminus B(\partial M,t),\,rt^\eta\bigr).
\]

Fix
\[
x,y\in B\bigl(M\setminus B(\partial M,t),\,rt^\eta\bigr).
\]
We first show that their projections lie in the quasiconvex set \(M(t/4)\).

Since \(x\in B(M\setminus B(\partial M,t),rt^\eta)\), we have
\[
d(x,\partial M)\ge t-rt^\eta.
\]
Because \(\eta\ge 1\), \(t\in(0,1]\), and \(r\le 1/3\), we have \(rt^\eta\le t/3\), hence
\[
d(x,\partial M)\ge t-rt^\eta\ge \frac{2t}{3}.
\]
Also,
\[
d\bigl(x,M\setminus B(\partial M,t)\bigr)\le rt^\eta\le \frac{t}{3}.
\]
Therefore,
\[
d\bigl(P_M(x),\partial M\bigr)
\ge d(x,\partial M)-|x-P_M(x)|
\ge \frac{2t}{3}-rt^\eta
\ge \frac{t}{3}.
\]
Thus \(P_M(x)\in M(t/4)\). The same argument shows that \(P_M(y)\in M(t/4)\).

Since \(M(t/4)\) is \(C'\)-quasiconvex, there exists an arc
\(\vartheta:[0,1]\to M(t/4)\) such that
\[
\vartheta(0)=P_M(x),\qquad \vartheta(1)=P_M(y),
\]
and
\[
\int_0^1 |\vartheta'(s)|\,ds
\le C'|P_M(x)-P_M(y)|
\le C'L|x-y|.
\]

Now define \(\gamma:[0,2]\to\mathbb R^n\) by
\[
\gamma(s):=\vartheta(s)+\bigl(x-P_M(x)\bigr)
\qquad\text{for }s\in[0,1],
\]
and
\[
\gamma(s):=(2-s)\gamma(1)+(s-1)y
\qquad\text{for }s\in(1,2].
\]
Then \(\gamma(0)=x\) and \(\gamma(2)=y\).

\smallskip
\noindent\emph{Length estimate.}
We have
\[
\begin{aligned}
\operatorname{length}(\gamma)
&=\int_0^1 |\vartheta'(s)|\,ds
   +\bigl|\gamma(1)-y\bigr| \\
&=\int_0^1 |\vartheta'(s)|\,ds
   +\bigl|P_M(y)+x-P_M(x)-y\bigr| \\
&\le C'L|x-y|+|P_M(y)-P_M(x)|+|x-y| \\
&\le \bigl(C'L+L+1\bigr)|x-y|.
\end{aligned}
\]
Thus the required length bound holds with
\[
C:=C'L+L+1.
\]

\smallskip
\noindent\emph{Location of the arc.}
It remains to prove that
\[
\gamma([0,2])\subset
B\bigl(M\setminus B(\partial M,\varrho t^\theta),\,rt^\eta\bigr).
\]

For \(s\in[0,1]\),
\[
|\gamma(s)-\vartheta(s)|=|x-P_M(x)|\le rt^\eta,
\]
hence
\[
\gamma([0,1])\subset B\bigl(M(t/4),rt^\eta\bigr).
\]
Since
\[
M(t/4)\subset M\setminus B\bigl(\partial M,\varrho'(t/4)^\theta\bigr)
\]
and \(\varrho\le \varrho'/4^\theta\), we have
\[
\varrho t^\theta\le \varrho'(t/4)^\theta,
\]
so
\[
M(t/4)\subset M\setminus B(\partial M,\varrho t^\theta).
\]
Therefore,
\[
\gamma([0,1])\subset
B\bigl(M\setminus B(\partial M,\varrho t^\theta),\,rt^\eta\bigr).
\]

For the second part of the path, note that
\[
|\gamma(1)-P_M(y)|=|x-P_M(x)|\le rt^\eta,
\qquad
|\gamma(2)-P_M(y)|=|y-P_M(y)|\le rt^\eta.
\]
Hence the line segment \(\gamma([1,2])\) is contained in the ball
\(
B\bigl(P_M(y),rt^\eta\bigr).
\)
Since \(P_M(y)\in M(t/4)\subset M\setminus B(\partial M,\varrho t^\theta)\), we obtain
\[
\gamma([1,2])\subset
B\bigl(M\setminus B(\partial M,\varrho t^\theta),\,rt^\eta\bigr).
\]
This completes the proof of item~2.

\medskip
\noindent\underline{Step 3: Proof of item 3.}
Fix \(t\in(0,1]\), \(\eta\in[\overline\eta,\infty)\), and \(0<r_1<r_2\le \overline r\). Set
\[
A:=M\setminus B(\partial M,\varrho t^\theta),
\qquad
\Omega:=\mathring B(A,r_2t^\eta).
\]
By Step~2, \(A\neq\emptyset\), hence \(\Omega\neq\emptyset\).

Let \(F:\Omega\to\mathbb R^m\) be a \(C^1\) definable map. By
Lemma~\ref{scale of vector field}, applied to the continuous definable map \(|DF|\), there
exist constants \(c_0>0\) and \(\omega\ge 0\) such that
\[
|DF(z)|\le \frac{c_0}{d(z,\partial\Omega)^\omega}
\qquad\forall z\in\Omega.
\]

Now take
\[
x,y\in B\bigl(M\setminus B(\partial M,t),\,r_1t^\eta\bigr).
\]
By item~2, there exists an arc \(\gamma\) connecting \(x\) and \(y\) in
\[
B\bigl(M\setminus B(\partial M,\varrho t^\theta),\,r_1t^\eta\bigr)
=
B(A,r_1t^\eta),
\]
with $\operatorname{length}(\gamma)\le C|x-y|$. Since \(r_1<r_2\), we have
\[
B(A,r_1t^\eta)\subset \mathring B(A,r_2t^\eta)=\Omega,
\]
so \(\gamma([0,2])\subset\Omega\).

We claim that
\[
d(z,\partial\Omega)\ge (r_2-r_1)t^\eta
\qquad\forall z\in\gamma([0,2]).
\]
Indeed, for such \(z\), $d(z,A)\le r_1t^\eta$. On the other hand, if \(w\in\partial\Omega\), then \(d(w,A)=r_2t^\eta\). Since the distance
function \(u\mapsto d(u,A)\) is \(1\)-Lipschitz,
\[
|z-w|\ge d(w,A)-d(z,A)\ge (r_2-r_1)t^\eta.
\]
Taking the infimum over \(w\in\partial\Omega\) proves the claim. Therefore,
\[
\begin{aligned}
|F(x)-F(y)|
&\le \int_\gamma |DF(z)|\,ds \\
&\le \frac{c_0}{((r_2-r_1)t^\eta)^\omega}\,\operatorname{length}(\gamma) \\
&\le \frac{Cc_0}{(r_2-r_1)^\omega}\,\frac{1}{t^{\eta\omega}}\,|x-y|.
\end{aligned}
\]
Thus the desired estimate holds with
\[
c:=\frac{Cc_0}{(r_2-r_1)^\omega}.
\]
This proves item~3.
\end{proof}

We conclude Section~\ref{sec:decomposition} with Proposition \ref{proposition:neighborhoods of cells}, which decomposes a bounded definable set into regions where the derivatives of the function and the projections are Lipschitz. These regions correspond to the stratification constructed in Proposition \ref{prop:function_Lipschitz}. For each stratum, this region is essentially a ball around it, after excluding a ball of the stratum's frontier. Note that these regions are non-uniform as each stratum is associated with a different radius, as illustrated in Figure \ref{fig:Ni}. This ensures that the projection maps are well-defined, and that the subdifferential can be related to the Riemannian gradient on the corresponding stratum. Also, regions of adjacent strata have nonempty intersections, which is essential for the later algorithmic analysis. Given a stratification $\{M_i\}_{i\in I}$ of $X\subset \mathbb{R}^n$ and $x\in X$, we denote by $M_x$ the only stratum that contains $x$.

\begin{proposition}\label{proposition:neighborhoods of cells}
Let \( f: \mathbb{R}^n \to \mathbb{R} \) be locally Lipschitz definable, $X\subset \mathbb{R}^n$ be definable compact, $L>1$, and $\{M_1,\ldots, M_T\}$ be a stratification of $X$ given by Proposition~\ref{prop:function_Lipschitz} with $m\ge 3$. There exist $\eta_{i}\ge 1$ for $i\in \llbracket 1,T\rrbracket$ such that for any $\beta_i,\gamma_j\ge0$ that satisfy
\begin{equation*}
\forall i,j\in \llbracket 1,T\rrbracket,\quad    M_j\subset \partial M_i \implies 0\le \eta_{i}\gamma_j \le \beta_i,
\end{equation*}
there exist $c_i,c_{1,i},c_{2,i},c_{3,i}>0$ and $\omega_{1,i},\omega_{2,i},\omega_{3,i}\in[0,\beta_i/4]$ for $i\in \llbracket 1,T\rrbracket$ such that for any $\alpha\in (0,1]$, the following holds:
\begin{enumerate}
\item $P_{M_i}$ is $L$-Lipschitz continuous and $C^{2}$ in $\bigcup_{\alpha\in(0,1]}\mathcal N_0(i,\alpha)$, where
\begin{equation}\label{eq:partition}
    \mathcal{N}_0(i,\alpha):= X\cap   B\left(M_i \setminus \bigcup_{j:M_j \subset \partial M_i} B(M_j, c_j \alpha^{\gamma_j}/2),2c_i\alpha^{\beta_i}\right).
\end{equation}
 \item For any $j\in\llbracket 1,T\rrbracket$ such that $\overline{M_i}\cap M_j=M_i\cap\overline{M_j}=\emptyset$, we have
\[\bigcup _{\alpha\in(0,1]}\mathcal{N}_0(i,\alpha)\cap\bigcup_{\alpha\in(0,1]}\mathcal{N}_0(j,\alpha)=\emptyset.\]
    \item $M_i \subset \overline{M_x}$ for any $x\in \mathcal{N}_0(i,\alpha)$.
\item It holds that   
\begin{align}
\label{eq:Lip_1}    &|\nabla_{M_i} f(x)-\nabla_{M_i} f(y)|\le \frac{c_{1,i}}{\alpha ^{\omega_{1,i}}} |x - y|,\quad \forall x,y\in \mathcal{N}_0(i,\alpha)\cap M_i,\\
\label{eq:Lip_2}    &|P_{T_{M_i}(y)}(v)-\nabla_{M_i}f(y)|\le \frac{c_{2,i}}{\alpha^{\omega_{2,i}}}|x-y|,\quad \forall x\in \mathcal{N}_0(i,\alpha),y\in \mathcal{N}_0(i,\alpha)\cap M_i,v\in\partial f(x),\\
\label{eq:Lip_3}    &|DP_{M_i}(x)-DP_{M_i}(y)|\le \frac{c_{3,i}}{\alpha^{\omega_{3,i}}}|x-y|,\quad \forall x,y\in \mathcal{N}_0(i,\alpha).
\end{align}
\end{enumerate}
\end{proposition}

\definecolor{MyBlue}{RGB}{65,105,225}
\definecolor{MyMagenta}{RGB}{204,51,153}
\definecolor{FillColor}{RGB}{200,220,255}
\definecolor{Area1}{RGB}{255,200,200}
\definecolor{Area2}{RGB}{200,255,200}
\definecolor{Area3}{RGB}{200,200,255}
\definecolor{Area12}{RGB}{255,255,200}
\definecolor{Area13}{RGB}{255,200,255}
\definecolor{Area23}{RGB}{200,255,255}
\definecolor{Area123}{RGB}{255,255,255}

\begin{figure}
\centering
    \begin{tikzpicture}[scale=1.8]
\tikzset{
  levelset/.style={draw=black, dashed, line cap=round},
  sublevel/.style={draw=orange!80!black, dash pattern=on 6pt off 4pt},
  valley/.style={draw=MyBlue, very thick},
  iterline/.style={draw=MyMagenta, line width=1.0pt, -{Latex[length=2.2mm]}},
  iterdot/.style={fill=MyMagenta, draw=MyMagenta},
  switchdot/.style={draw=MyMagenta, fill=white, very thick},
  brace/.style={decorate, decoration={brace, amplitude=5pt}, thick},
  lab/.style={font=\scriptsize}
}

;
\begin{scope}
\fill[Area1, opacity=0.4]
    (-0.13,0.58) .. controls (-0.8,1.2) and (-1.2,1.9) .. (-1.6,2.4)
    --
    (-2.5,1.6) .. controls (-1.85,0.9) and (-1.7,0.6) .. (-0.48,-0.38)
    (0,0) circle (0.6);
\fill[Area2, opacity=0.4]
    (.45,.47).. controls (1.2,0.5) and (2.4,-.1) .. (2.9,.4)
    --
    (2.9,-.6) .. controls (2.4,-1.1) and (1.2,-0.5) .. (.45,-.45)
    (0,0) circle (0.6);;
\fill[white]
    (0,0) circle (0.6);
\fill[Area3, opacity=0.4]
    (0,0) circle (0.85);
\end{scope}

\fill[Area1] (4.1,2.1) rectangle (4.6,2.4);
\node[align=left, font=\small, anchor=west] at (4.8,2.25) {$\mathcal{N}_0(1,\alpha)$};

\fill[Area2] (4.1,0.7) rectangle (4.6,1);
\node[align=left, font=\small, anchor=west] at (4.8,0.85) {$\mathcal{N}_0(3,\alpha)$};

\fill[Area3] (4.1,1.4) rectangle (4.6,1.7);
\node[align=left, font=\small, anchor=west] at (4.8,1.55) {$\mathcal{N}_0(2,\alpha)$};

\draw[levelset] (4.1,0.15) -- (4.6,0.15);
\node[align=left, font=\small, anchor=west] at (4.8,0.15) {$2c_i\alpha^{\beta_i}$};

\draw[sublevel] (4.1,-0.55) -- (4.6,-0.55);
\node[align=left, font=\small, anchor=west] at (4.8,-0.55) {$c_i\alpha^{\gamma_i}/2$};

\draw[levelset] (-1.6,2.4) .. controls (-1.2,1.9) and (-0.8,1.2) .. (-0.13,0.58);
\draw[sublevel]  (-1.8,2.1) .. controls (-1.4,1.6) and (-1.0,0.9) .. (-0.39,0.45);
\draw[sublevel]  (-2.15,1.8) .. controls (-1.75,1.3) and (-1.35,0.6) .. (-0.60,0.05);
\draw[levelset] (-2.5,1.6) .. controls (-1.85,0.9) and (-1.7,0.6) .. (-0.48,-0.38);

\draw[levelset] (0,0) circle (0.85);
\draw[sublevel] (0,0) circle (0.6);

\draw[levelset](.45,.47).. controls (1.2,0.5) and (2.4,-.1) .. (2.9,.4);
\draw[levelset](.45,-.45).. controls (1.2,-0.5) and (2.4,-1.1) .. (2.9,-.6);
\draw[sublevel](.61,.2).. controls (1.2,0.2) and (2.4,-.35) .. (2.9,.15);
\draw[sublevel](.61,-.1).. controls (0.9,-0.1) and (2.2,-0.8) .. (2.9,-.3);
\draw[valley] (-2,2) .. controls (-1.1,0.6) and (-0.5,0.2) .. (0,0);

\draw[valley] (0,0) .. controls (1.2,0.3) and (2.2,-0.7) .. (3,0);
\filldraw[MyBlue] (0,0) circle (1pt);
\node[align=left, font=\small, anchor=west] at (-2.4,2.2) {$M_1$};
\node[align=left, font=\small, anchor=west] at (0,-0.2) {$M_2$};
\node[align=left, font=\small, anchor=west] at (3.0,-0.15) {$M_3$};
\end{tikzpicture}
\caption{Constructed regions $\mathcal{N}_0(i,\alpha)$ for three adjacent strata.}
\label{fig:Ni}
\end{figure}

\begin{proof}
The proof proceeds in seven steps. In Step 1, we record the consequences of
Lemma~\ref{lem: connected away boundary in ball} and
Proposition~\ref{prop:function_Lipschitz} for each stratum \(M_i\), obtaining
uniform neighborhoods on which the relevant maps satisfy quantitative Lipschitz estimates.
In Step 2, we treat the lowest-dimensional strata separately. In Step 3, for a
general stratum \(M_i\), we choose the constants \(\eta_i,\beta_i,c_i,\gamma_i\)
recursively so that all geometric separation requirements are satisfied. In
Steps 4-7, we prove items 1-4 of the proposition respectively.

\medskip
\noindent\underline{Step 1: Preliminary consequences of Lemma~\ref{lem: connected away boundary in ball}.}
For each \(i\in \llbracket 1,T\rrbracket\), apply Lemma~\ref{lem: connected away boundary in ball} to the stratum \(M_i\) with the prescribed constant \(L>1\). We obtain constants
\[
\varrho_i\in (0,1],\qquad \theta_i\ge 1,\qquad \overline\eta_i\ge \theta_i,\qquad
\overline r_i\in (0,\min\{\varrho_i,1/3\})
\]
such that
\[
M_i\setminus B(\partial M_i,\varrho_i)\neq \emptyset,
\]
and hence
\[
M_i\setminus B(\partial M_i,\varrho_i t^{\theta_i})\neq \emptyset
\qquad \forall\, t\in(0,1].
\]

For each \(t\in(0,1]\), define
\begin{equation}\label{eq:omega_i_t}
    \Omega_{i,t}
:=
\mathring B\Bigl(
M_i\setminus B(\partial M_i,\varrho_i t^{\theta_i}),
\ \overline r_i\varrho_i^{\overline\eta_i}t^{\theta_i\overline\eta_i}
\Bigr).
\end{equation}
Since \(M_i\setminus B(\partial M_i,\varrho_i t^{\theta_i})\neq\emptyset\), the set \(\Omega_{i,t}\) is nonempty for every \(t\in(0,1]\). By the second item of Lemma~\ref{lem: connected away boundary in ball},
\(P_{M_i}\) is \(L\)-Lipschitz and \(C^2\) on \(\bigcup_{t\in (0,1]}\Omega_{i,t}\). Thus, the maps
\[
\widetilde F_{1,i}(z):=P_{T_{M_i}(P_{M_i}(z))}\in \mathbb R^{n\times n},
\qquad
\widetilde F_{2,i}(z):=\nabla_{M_i}f\circ P_{M_i}(z)\in \mathbb R^{n},
\qquad
\widetilde F_{3,i}(z):=DP_{M_i}(z)\in \mathbb R^{n\times n}
\]
are definable \(C^1\) maps on \(\Omega_{i,t}\). We now apply the third item of Lemma~\ref{lem: connected away boundary in ball} to each
\(\widetilde F_{j,i}\), with
\[
\eta:=\theta_i\overline\eta_i,\qquad
r_1:=\frac{\overline r_i\varrho_i^{\overline\eta_i}}{2},
\qquad
r_2:=\overline r_i\varrho_i^{\overline\eta_i}.
\]
Since \(\theta_i\overline\eta_i\ge \overline\eta_i\) and $0<r_1<r_2 \le\overline{r}_i$, these choices are admissible. Therefore, for each \(j=1,2,3\), there exist constants
\[
\widetilde c_{j,i}>0,\qquad \widetilde\omega_{j,i}\ge 0
\]
such that
\begin{equation}\label{eq: tilde F ineq}
|\widetilde F_{j,i}(x)-\widetilde F_{j,i}(y)|
\le
\frac{\widetilde c_{j,i}}{t^{\widetilde\omega_{j,i}}}|x-y|,\quad \forall x,y\in
B\Bigl(
M_i\setminus B(\partial M_i,t),
\ \frac{\overline r_i\varrho_i^{\overline\eta_i}}{2}\,t^{\theta_i\overline\eta_i}
\Bigr).
\end{equation}

By Proposition~\ref{prop:function_Lipschitz}, there exist \(C>0\) and \(\iota\ge 0\) such that for each \(i\in\llbracket1,T\rrbracket\),
\begin{equation}\label{eq: 3.5Rieman lip}
|\nabla_{M_i}f(x)-\nabla_{M_i}f(y)|
\le \frac{C}{d(\{x,y\},\partial M_i)^{\iota}}|x-y|,
\qquad \forall x,y\in M_i,
\end{equation}
and
\begin{equation}\label{eq: 3.5verdier}
\begin{aligned}
    &|P_{T_{M_j}(y)}(v)-\nabla_{M_j}f(y)|
\le \frac{C}{d(y,\partial M_j)^\iota}|x-y|,\\
&\forall i,j:\ M_j\subset\partial M_i,\ \forall x\in M_i,\ \forall y\in M_j,\ \forall v\in\partial f(x).
\end{aligned}
\end{equation}

\medskip
\noindent\underline{Step 2: Lowest-dimensional strata.}
Let \(M_i\) be a stratum of lowest dimension in the stratification. Then \(\partial M_i=\emptyset\), and \(M_i\) is compact. We claim that any choice of constants satisfying
\begin{equation}\label{eq:lowest}
c_i\in\left(0,\frac{1}{6}\min\left\{\overline r_i,\inf\left\{d(M_i,M_j):\overline{M_i}\cap \overline{M_j} = \emptyset \right\}\right\}\right),\qquad
\beta_i,\gamma_i\in [0,\infty),\qquad \eta_i\in [1,\infty)
\end{equation}
is sufficient to establish items 1,3, and 4 of the proposition; The proof of item 2 for those strata is deferred to Step 5.

Indeed, since \(\partial M_i=\emptyset\), \(\mathcal N_0(i,\alpha)=X\cap B(M_i,2c_i\alpha^{\beta_i})\) for all $\alpha\in(0,1]$, and hence
\[
\bigcup_{\alpha\in(0,1]}\mathcal N_0(i,\alpha)=X\cap B(M_i,2c_i).
\]
If \(x\in X\cap B(M_i,2c_i)\), then any stratum whose closure is disjoint from \(\overline{M_i}=M_i\) cannot contain \(x\), by the choice of \(c_i\). Therefore, by the frontier condition, either \(M_x=M_i\) or \(M_i\subset \partial M_x\). In either case,
\[
M_i\subset \overline{M_x}.
\]
Also, since \(2c_i<\overline r_i\), the second item of Lemma~\ref{lem: connected away boundary in ball} implies that \(P_{M_i}\) is \(L\)-Lipschitz and \(C^2\) on \(X\cap B(M_i,2c_i)\).

We next prove \eqref{eq:Lip_1}--\eqref{eq:Lip_3} for such a lowest-dimensional stratum. Since \(M_i\) is compact and \(P_{M_i}\) is \(C^2\) on \(X\cap B(M_i,2c_i)\), the maps
\[
\nabla_{M_i}f \quad\text{on }M_i,
\qquad
DP_{M_i}\quad\text{on }X\cap B(M_i,2c_i)
\]
are Lipschitz on compact sets. Hence \eqref{eq:Lip_1} and \eqref{eq:Lip_3} hold with
\[
\omega_{1,i}=\omega_{3,i}=0
\]
and some \(c_{1,i},c_{3,i}>0\).

To prove \eqref{eq:Lip_2}, let \(x\in \mathcal N_0(i,\alpha)\), \(y\in \mathcal N_0(i,\alpha)\cap M_i\), and \(v\in\partial f(x)\). Since \(f\) is Lipschitz on the compact set \(X\), there exists \(L_0>0\) such that
\begin{equation}\label{eq:Lip_L0}
    |v|\le L_0
\qquad\text{for all }z\in X,\ v\in\partial f(z).
\end{equation}

\smallskip
\noindent\emph{Case 2.1: \(x\in M_i\).}
Then by \cite[Proposition 4]{bolte2007clarke}, $P_{T_{M_i}(x)}(v)=\nabla_{M_i}f(x)$. Therefore
\begin{align*}
|P_{T_{M_i}(y)}(v)-\nabla_{M_i}f(y)|
&\le |P_{T_{M_i}(y)}(v)-P_{T_{M_i}(x)}(v)|
   +|\nabla_{M_i}f(x)-\nabla_{M_i}f(y)| \\
&\le |P_{T_{M_i}(y)}-P_{T_{M_i}(x)}|\,|v|
   +|\nabla_{M_i}f(x)-\nabla_{M_i}f(y)| \\
&\le C_i |x-y|
\end{align*}
for some constant \(C_i>0\), because the maps
\[
z\mapsto P_{T_{M_i}(z)}
\qquad\text{and}\qquad
z\mapsto \nabla_{M_i}f(z)
\]
are Lipschitz on the compact manifold \(M_i\).

\smallskip
\noindent\emph{Case 2.2: \(x\notin M_i\).} Since $M_i \subset \overline{M_x}$, 
\(x\in M_k\) for some stratum \(M_k\) such that \(M_i\subset \partial M_k\). Applying \eqref{eq: 3.5verdier} to the pair \((M_k,M_i)\), and using compactness of \(M_i\), we obtain
\[
|P_{T_{M_i}(y)}(v)-\nabla_{M_i}f(y)|\le C_i |x-y|
\]
for some constant \(C_i>0\) independent of \(\alpha\).

Thus \eqref{eq:Lip_2} holds with \(\omega_{2,i}=0\) and some \(c_{2,i}>0\). Therefore \eqref{eq:Lip_1}--\eqref{eq:Lip_3} hold for all lowest-dimensional strata, and
\[
\omega_{1,i},\omega_{2,i},\omega_{3,i}\le \beta_i/4
\]
holds trivially.

\medskip
\noindent\underline{Step 3: Recursive choice of the constants for a general stratum.}
Fix now a stratum \(M_i\) with \(\partial M_i\neq \emptyset\), and assume that the constants have already been fixed for all strata contained in \(\partial M_i\).

Define
\begin{equation}\label{eq:def_I_i}
    I_i:=\left\{k\in \llbracket 1,T\rrbracket:
\overline{M_i}\cap \overline{M_k}\neq \emptyset,\ 
M_i\cap \overline{M_k} = 
\overline{M_i}\cap M_k=\emptyset
\right\},
\end{equation}
and
\[
J_i:=\{k\in \llbracket 1,T\rrbracket:\ M_k\subset \partial M_i\}.
\]
For every \(k\in I_i\), by the \L{}ojasiewicz inequality there exist \(a_{ik}>0\) and \(\lambda_{ik}\ge 1\) such that
\[
d(y,M_k)\ge a_{ik}\, d(y,\partial M_i)^{\lambda_{ik}}
\qquad\text{for all }y\in \overline{M_i}.
\]
Let
\[
D:=\max\{1,\operatorname{diam}(X)\}.
\]
Choose \(\eta_i\) so that
\[
\eta_i\ge \theta_i \overline{\eta}_i,
\qquad
\eta_i\ge \lambda_{ik}\ \text{for all }k\in I_i,
\qquad
\eta_i\ge4\max\{\iota,\widetilde\omega_{1,i},\widetilde\omega_{2,i},\widetilde\omega_{3,i}\},
\]
where $\theta_i,\bar{\eta}_i,\widetilde\omega_{\cdot,i}$ are constants defined in Step 1 of the proof. We then choose \(r_i\in(0,\overline r_i]\) so small that
\[
r_i\le \frac13 a_{ik}D^{\lambda_{ik}-\eta_i}
\qquad\text{for every }k\in I_i.
\]
With this choice, for all $y\in \overline{M_i}$ and $k\in I_i$, we have
\begin{equation}\label{eq:adjacent-fixed}
\begin{aligned}
d(y,M_k)&\ge a_{ik}\, d(y,\partial M_i)^{\lambda_{ik}}\\
&= 3r_i D^{\eta_i - \lambda_{ik}} d(y,\partial M_i)^{\lambda_{ik}}\\
&= 3r_i D^{\eta_i} (d(y,\partial M_i)/D)^{\lambda_{ik}}\\
&\ge 3r_i D^{\eta_i} (d(y,\partial M_i)/D)^{\eta_i}\\
&= 3r_i d(y,\partial M_i)^{\eta_i}.
\end{aligned}
\end{equation}

It remains to fix the constants that define $\mathcal N_0(i,\alpha)$.  define
\[
\underline{c}_i:=\min\{c_j:M_j\subset \partial M_i\},
\qquad
\overline\gamma_i:=\max\{\gamma_j:M_j\subset \partial M_i\},
\]
and
\begin{equation}\label{eq:delta_i}
    \delta_i:=\inf\{d(M_i,M_k):\overline{M_i}\cap \overline{M_k}=\emptyset\}>0.
\end{equation}
We choose any
\begin{equation}\label{eq:beta_i_range}
    \beta_i\ge\eta_i\overline\gamma_i,
\end{equation}
choose \(c_i>0\) so small that
\begin{equation}\label{eq:range_c}
c_i\le \min\left\{
\frac{\overline r_i\varrho_i^{\overline{\eta}_i}}{4}\Bigl(\frac{\underline{c}_i}{2}\Bigr)^{\theta_i\overline{\eta}_i},
\ \frac{\underline{c}_i}{8},
\ \frac{\delta_i}{6},
\ r_i \Bigl(\frac{\underline{c}_i}{2}\Bigr)^{\eta_i}/2
\right\},
\end{equation}
and finally fix any \(\gamma_i\ge0\).

\medskip
\noindent\underline{Step 4: Regularity of \(P_{M_i}\).}
We claim that with the above choices, \(P_{M_i}\) is \(L\)-Lipschitz and \(C^2\) on
\[
\bigcup_{\alpha\in(0,1]}\mathcal N_0(i,\alpha).
\]

Fix \(\alpha\in(0,1]\), and define
\begin{equation}\label{eq:t_ia}
t_{i,\alpha}:=\min\{c_j\alpha^{\gamma_j}/2:M_j\subset \partial M_i\}\ge \frac{\underline{c}_i}{2}\alpha^{\overline\gamma_i}.    
\end{equation}
Also, by the recursive definition $c_j \le 1/3$ for all $M_j\subset \partial M_i$, so $t_{i,\alpha} \le 1$.
Because
\[
\beta_i\ge \eta_i\overline\gamma_i\ge \theta_i\overline\eta_i\,\overline\gamma_i,
\]
we have
\[
\alpha^{\beta_i}\le \alpha^{\theta_i\overline\eta_i\,\overline\gamma_i}
\qquad\text{for all }\alpha\in(0,1].
\]
Hence, by \eqref{eq:range_c},
\[
2c_i\alpha^{\beta_i}
\le
\frac{\overline r_i\varrho_i^{\overline\eta_i}}{2}\,
t_{i,\alpha}^{\theta_i\overline\eta_i}.
\]
Therefore
\begin{equation}\label{eq: ball contains ball 2}
\mathcal N_0(i,\alpha)\subset 
B\Bigl(
M_i\setminus B(\partial M_i,t_{i,\alpha}),
\ \frac{\overline r_i\varrho_i^{\overline\eta_i}}{2}\,t_{i,\alpha}^{\theta_i\overline\eta_i}
\Bigr) \subset \Omega_{i,t_{i,\alpha}},
\end{equation}
where $\Omega_{i,\cdot}$ is defined in \eqref{eq:omega_i_t}. By the second item of Lemma \ref{lem: connected away boundary in ball}, \(P_{M_i}\) is \(L\)-Lipschitz and \(C^2\) on $\bigcup_{\alpha\in(0,1]}\mathcal N_0(i,\alpha)$. This proves the claim.

\medskip
\noindent\underline{Step 5: Empty intersection of adjacent neighborhoods.} Fix any $j\in\llbracket 1,T\rrbracket$ such that $\overline{M_i}\cap M_j=M_i\cap\overline{M_j}=\emptyset$. We will prove that
\begin{equation}\label{eq:no_intersect}
    \bigcup _{\alpha\in(0,1]}\mathcal{N}_0(i,\alpha)\cap\bigcup_{\alpha\in(0,1]}\mathcal{N}_0(j,\alpha)=\emptyset.
\end{equation}
Assume the contrary and let $x\in \mathcal{N}_0(i,\alpha_1)\cap\mathcal{N}_0(j,\alpha_2)$ for some $\alpha_1,\alpha_2\in (0,1]$. Then there exist $y_i\in M_i \setminus \bigcup_{k:M_k \subset \partial M_i} B(M_k, c_k \alpha_1^{\gamma_k}/2)$ and $y_j\in M_j \setminus \bigcup_{k:M_k \subset \partial M_j} B(M_k, c_k \alpha_2^{\gamma_k}/2)$, such that 
\begin{equation}\label{eq:dist_y}
    |y_i-y_j|\le |x-y_i|+|x-y_j|\le 2c_i\alpha_1^{\beta_i}+2c_j\alpha_2^{\beta_j}.
\end{equation}
We consider the following two cases.

\smallskip
\noindent\emph{Case 5.1: $\overline{M_i} \cap \overline{M_j} = \emptyset$.} In this case, by the choice of $c_i$ in \eqref{eq:lowest} and \eqref{eq:range_c}, we have
\[|y_i-y_j| \ge \frac{1}{2}\left(d(M_i,M_j)+d(M_j,M_i) \right)\ge \frac{1}{2}\left(6c_i + 6c_j \right) = 3c_i + 3c_j,\]
which is a contradiction with \eqref{eq:dist_y}.

\smallskip
\noindent\emph{Case 5.2: $\overline{M_i} \cap \overline{M_j} \ne \emptyset$.} In this case, neither $M_i$ nor $M_j$ is of the lowest dimension in the stratification. By \eqref{eq:def_I_i}, we have $j\in I_i$ and $i\in I_j$.
Thus, by \eqref{eq:adjacent-fixed}, \begin{align*}
    |y_i-y_j|&\ge \frac{1}{2}(d(y_i,M_j)+d(y_j,M_i))
    \\&\ge \frac32 (r_i d(y_i,\partial M_i)^{\eta_i}+r_j d(y_j,\partial M_j)^{\eta_j})
    \\&\ge \frac32 \left(r_i\left(\frac{\underline{c}_i}{2}\right)^{\eta_i}\alpha_1^{\overline{\gamma}_i\eta_i}+r_j\left(\frac{\underline{c}_j}{2}\right)^{\eta_j}\alpha_2^{\overline{\gamma}_j\eta_j}\right)
    \\&\ge 3(c_i\alpha_1^{\beta_i}+c_j\alpha_2^{\beta_j}),
\end{align*}
where we used the definitions of $\underline c_i,\overline \gamma_i$, and the range of $c_i$ in \eqref{eq:range_c}. This is again a contradiction with \eqref{eq:dist_y}.

Combining the above two cases, we conclude \eqref{eq:no_intersect}.

\medskip
\noindent\underline{Step 6: Proof of \(M_i\subset \overline{M_x}\) for \(x\in\mathcal N_0(i,\alpha)\).}
Fix \(\alpha\in(0,1]\) and \(x\in \mathcal N_0(i,\alpha)\). By definition of \(\mathcal N_0(i,\alpha)\), there exists
\[
z\in M_i\setminus \bigcup_{j:M_j\subset \partial M_i} B(M_j,c_j\alpha^{\gamma_j}/2)
\]
such that
\[
|x-z|\le 2c_i\alpha^{\beta_i}.
\]

We first claim that
\[
\overline{M_x}\cap \overline{M_i}\neq \emptyset.
\]
Indeed, otherwise by the definition of $\delta_i$ in \eqref{eq:delta_i}, $\delta_i<\infty$ and
\[
|x-z|\ge d(M_x,M_i)\ge \delta_i,
\]
while by \eqref{eq:range_c},
\[
|x-z|\le 2c_i\alpha^{\beta_i}\le 2c_i \le 2\delta_i/6< \delta_i,
\]
a contradiction.

If \(M_x=M_i\), then the conclusion is immediate. Assume from now on that \(M_x\neq M_i\). We split the argument into two cases.

\smallskip
\noindent\emph{Case 6.1: one closure meets the other stratum.}
Assume
\[
\overline{M_i} \cap M_x \neq \emptyset
\qquad\text{or}\qquad
M_i \cap \overline{M_x} \neq \emptyset.
\]
By the frontier condition, either \(M_x\subset \partial M_i\) or \(M_i\subset\partial M_x\).

We claim that \(M_x\subset\partial M_i\) is impossible. Indeed, if \(M_x=M_j\) for some \(j\in J_i\), then
\[
z\notin B(M_j,c_j\alpha^{\gamma_j}/2),
\]
so
\[
|x-z|\ge d(z,M_j)\ge c_j\alpha^{\gamma_j}/2.
\]
On the other hand, since \(c_i\le \underline{c}_i/8\) by \eqref{eq:range_c} and \(\beta_i\ge \eta_i\overline\gamma_i\ge \gamma_j\) by \eqref{eq:beta_i_range},
\[
|x-z|\le 2c_i\alpha^{\beta_i}
\le \frac{\underline{c}_i}{4}\alpha^{\beta_i}
\le \frac{c_j}{4}\alpha^{\gamma_j},
\]
a contradiction. Therefore the only possibility is
\[
M_i\subset \partial M_x,
\]
and thus \(M_i\subset \overline{M_x}\).

\smallskip
\noindent\emph{Case 6.2: adjacent but disjoint.}
Assume now that
\begin{equation}\label{eq:adjacent}
M_i \cap \overline{M_x} = \overline{M_i} \cap M_x = \emptyset.
\end{equation}
Let \(M_x=M_j\) for some $j\in \llbracket 1,T\rrbracket$. By the conclusion from Step 5, we have
\[  \bigcup _{\alpha\in(0,1]}\mathcal{N}_0(i,\alpha)\cap\bigcup_{\alpha\in(0,1]}\mathcal{N}_0(j,\alpha)=\emptyset.\]
As $x\in M_j\subset \cup_{\alpha\in(0,1]}\mathcal{N}_0(j,\alpha)$, this is in contradiction with $x\in \mathcal N_0(i,\alpha)$.

Thus Case 6.2 is impossible, and we conclude that
\[
M_i\subset \overline{M_x}
\qquad\text{for all }x\in \mathcal N_0(i,\alpha).
\]
\medskip
\noindent\underline{Step 7: Proof of \eqref{eq:Lip_1}--\eqref{eq:Lip_3}.}
Fix \(i\in \llbracket 1,T\rrbracket\) and \(\alpha\in(0,1]\). For the lowest-dimensional strata, these estimates were already established in Step 2. So we assume \(\partial M_i\neq\emptyset\).

We begin with a lower bound on the distance to the boundary. Let \(z\in \mathcal N_0(i,\alpha)\). Then there exists
\[
y_z\in M_i\setminus \bigcup_{j:M_j\subset \partial M_i} B(M_j,c_j\alpha^{\gamma_j}/2)
\]
such that
\[
|z-y_z|\le 2c_i\alpha^{\beta_i}.
\]
Therefore
\begin{align}
d(z,\partial M_i)
&\ge d(y_z,\partial M_i)-|z-y_z| \nonumber\\
&\ge t_{i,\alpha}-2c_i\alpha^{\beta_i} \nonumber\\
&\ge \frac{\underline{c}_i}{2}\alpha^{\overline\gamma_i}-\frac{\underline{c}_i}{4}\alpha^{\beta_i} \nonumber\\
&\ge \frac{\underline{c}_i}{4}\alpha^{\overline\gamma_i}, \label{eq:dis-to-boundary-final}
\end{align}
where we used \(c_i\le \underline{c}_i/8\) and \(\beta_i\ge \overline\gamma_i\) from \eqref{eq:range_c} and \eqref{eq:beta_i_range}, respectively.

\smallskip
\noindent\emph{Step 7.1: Proof of \eqref{eq:Lip_1}.}
Let \(x,y\in \mathcal N_0(i,\alpha)\cap M_i\). By \eqref{eq: 3.5Rieman lip} and \eqref{eq:dis-to-boundary-final},
\[
|\nabla_{M_i}f(x)-\nabla_{M_i}f(y)|
\le \frac{C}{d(\{x,y\},\partial M_i)^{\iota}}|x-y|
\le \frac{C4^{\iota}}{\underline{c}_i^{\iota}}\,
\alpha^{-\overline\gamma_i\iota}|x-y|.
\]
Thus \eqref{eq:Lip_1} holds with
\[
c_{1,i}:=\frac{C4^{\iota}}{\underline{c}_i^{\iota}},
\qquad
\omega_{1,i}:=\overline\gamma_i\iota.
\]

\smallskip
\noindent\emph{Step 7.2: Proof of \eqref{eq:Lip_2}.}
Let \(x\in \mathcal N_0(i,\alpha)\), \(y\in \mathcal N_0(i,\alpha)\cap M_i\), and \(v\in\partial f(x)\). We distinguish two cases.

\smallskip
\noindent\underline{Case 7.2(a): \(x\in M_i\).}
Then, by \eqref{eq: ball contains ball 2}, both \(x\) and \(y\) belong to
\[
B\Bigl(
M_i\setminus B(\partial M_i,t_{i,\alpha}),
\ \frac{\overline r_i\varrho_i^{\overline\eta_i}}{2}\,t_{i,\alpha}^{\theta_i\overline\eta_i}
\Bigr).
\]
Applying \eqref{eq: tilde F ineq} with \(j=1,2\), we obtain
\[
|P_{T_{M_i}(y)}-P_{T_{M_i}(x)}|
\le \frac{\widetilde c_{1,i}}{t_{i,\alpha}^{\widetilde\omega_{1,i}}}|x-y|,
\qquad
|\nabla_{M_i}f(y)-\nabla_{M_i}f(x)|
\le \frac{\widetilde c_{2,i}}{t_{i,\alpha}^{\widetilde\omega_{2,i}}}|x-y|.
\]
By Lipschitz continuity of $f$ on $X$ (see \eqref{eq:Lip_L0}) and the fact that \(
P_{T_{M_i}(x)}(v)=\nabla_{M_i}f(x)
\)
due to \cite[Proposition 4]{bolte2007clarke}, we have
\begin{align*}
|P_{T_{M_i}(y)}(v)-\nabla_{M_i}f(y)|
&\le |P_{T_{M_i}(y)}(v)-P_{T_{M_i}(x)}(v)|
   + |\nabla_{M_i}f(x)-\nabla_{M_i}f(y)| \\
&\le \left(
\frac{L_0\widetilde c_{1,i}}{t_{i,\alpha}^{\widetilde\omega_{1,i}}}
+\frac{\widetilde c_{2,i}}{t_{i,\alpha}^{\widetilde\omega_{2,i}}}
\right)|x-y|.
\end{align*}
Using
\(
t_{i,\alpha}\ge \underline{c}_i\alpha^{\overline\gamma_i}/2
\) from \eqref{eq:t_ia},
we obtain
\[
|P_{T_{M_i}(y)}(v)-\nabla_{M_i}f(y)|
\le
\left(
L_0\widetilde c_{1,i}\Bigl(\frac{2}{\underline{c}_i}\Bigr)^{\widetilde\omega_{1,i}}
\alpha^{-\overline\gamma_i\widetilde\omega_{1,i}}
+
\widetilde c_{2,i}\Bigl(\frac{2}{\underline{c}_i}\Bigr)^{\widetilde\omega_{2,i}}
\alpha^{-\overline\gamma_i\widetilde\omega_{2,i}}
\right)|x-y|.
\]
Hence \eqref{eq:Lip_2} holds in this case with
\[
\omega_{2,i}^{(1)}:=\overline\gamma_i\max\{\widetilde\omega_{1,i},\widetilde\omega_{2,i}\}.
\]

\smallskip
\noindent\underline{Case 7.2(b): \(x\notin M_i\).}
By Step 5, we have \(M_i\subset \partial M_x\). Applying \eqref{eq: 3.5verdier} to the pair
\((M_x,M_i)\), we get
\[
|P_{T_{M_i}(y)}(v)-\nabla_{M_i}f(y)|
\le \frac{C}{d(y,\partial M_i)^{\iota}}|x-y|.
\]
Using \eqref{eq:dis-to-boundary-final},
\[
|P_{T_{M_i}(y)}(v)-\nabla_{M_i}f(y)|
\le
\frac{C4^{\iota}}{\underline{c}_i^{\iota}}
\alpha^{-\overline\gamma_i\iota}|x-y|.
\]

Combining Cases 7.2(a) and 7.2(b), we conclude that \eqref{eq:Lip_2} holds with
\[
\omega_{2,i}:=\max\{\omega_{2,i}^{(1)},\overline\gamma_i\iota\}
\]
and some \(c_{2,i}>0\).

\smallskip
\noindent\emph{Step 7.3: Proof of \eqref{eq:Lip_3}.}
Let \(x,y\in \mathcal N_0(i,\alpha)\). By \eqref{eq: ball contains ball 2},
both \(x\) and \(y\) belong to
\[
B\Bigl(
M_i\setminus B(\partial M_i,t_{i,\alpha}),
\ \frac{\overline r_i\varrho_i^{\overline\eta_i}}{2}\,t_{i,\alpha}^{\theta_i\overline\eta_i}
\Bigr).
\]
Applying \eqref{eq: tilde F ineq} with \(j=3\), we have
\[
|DP_{M_i}(x)-DP_{M_i}(y)|
\le \frac{\widetilde c_{3,i}}{t_{i,\alpha}^{\widetilde\omega_{3,i}}}|x-y| \le \widetilde c_{3,i}\Bigl(\frac{2}{\underline{c}_i}\Bigr)^{\widetilde\omega_{3,i}}
\alpha^{-\overline\gamma_i\widetilde\omega_{3,i}}|x-y|,
\]
using again \(t_{i,\alpha}\ge \frac{\underline{c}_i}{2}\alpha^{\overline\gamma_i}\). Thus \eqref{eq:Lip_3} holds with
\[
c_{3,i}:=\widetilde c_{3,i}\Bigl(\frac{2}{\underline{c}_i}\Bigr)^{\widetilde\omega_{3,i}},
\qquad
\omega_{3,i}:=\overline\gamma_i\widetilde\omega_{3,i}.
\]

Finally, by the choice of \(\eta_i\),
\[
\eta_i\ge 4\max\{\iota,\widetilde\omega_{1,i},\widetilde\omega_{2,i},\widetilde\omega_{3,i}\},
\]
and since \(\beta_i\ge \eta_i\overline\gamma_i\), we obtain
\[
\beta_i\ge 4\overline\gamma_i\iota=4\omega_{1,i},
\qquad
\beta_i\ge 4\overline\gamma_i\widetilde\omega_{3,i}=4\omega_{3,i},
\]
and also
\[
\beta_i\ge 4\overline\gamma_i\max\{\widetilde\omega_{1,i},\widetilde\omega_{2,i},\iota\}\ge 4\omega_{2,i}.
\]
Therefore
\[
\omega_{1,i},\omega_{2,i},\omega_{3,i}\in[0,\beta_i/4].
\]

This completes the proof.
\end{proof}

\section{Bounding the diameter of subgradient sequences}\label{sec:main_proof}

In this section, we prove Theorem~\ref{thm:diameter}. The argument builds on the decomposition results established in Section~\ref{sec:decomposition}. There, the domain is partitioned into strata on which the subdifferential is related to the Riemannian gradient, and for each stratum we construct neighborhoods where the projection map is well defined and its derivatives are controlled. This allows us to first analyze the subgradient sequence while it remains near a fixed stratum, and then to combine these local estimates as the sequence moves between different strata.

In Section~\ref{sec:length_formula}, we derive a local length estimate for the projection of a subgradient sequence onto a fixed stratum. In Section~\ref{sec:RM_def}, we introduce the indices and the notion of relative length that describe how the sequence evolves with respect to the stratification. In Section~\ref{sec:bound_rm}, we bound this relative length by combining the local estimates on individual strata with the control of transitions between strata. Finally, in Section~\ref{sec:final_proof}, we assemble these ingredients and complete the proof of Theorem~\ref{thm:diameter}.
\subsection{Length estimates of projected sequences}\label{sec:length_formula}
Let \(M\subset \mathbb{R}^n\) be a bounded \(C^2\) manifold, and let
\(g:M\to \mathbb{R}\) be a \(C^1\) definable function. By the
\L{}ojasiewicz gradient inequality \cite{lojasiewicz1958,kurdyka1994wf}
(see also \cite[Theorem 11]{bolte2007clarke}), there exist
\(\epsilon_+>0\), \(\theta_+\in[0,1)\), and \(\mu_+>0\) such that, defining
\[
\psi_+(t):=\frac{t^{1-\theta_+}}{(1-\theta_+)\mu_+},
\qquad t\in[0,\epsilon_+],
\]
one has
\[
|\nabla g(x)|\ge \frac{1}{\psi_+'(g(x))}
\]
for all \(x\in M\) satisfying \(0<g(x)\le \epsilon_+\).

Applying the \L{}ojasiewicz gradient inequality to \(-g\), there likewise exist
\(\epsilon_->0\), \(\theta_-\in[0,1)\), and \(\mu_->0\) such that, defining
\[
\psi_-(t):=\frac{t^{1-\theta_-}}{(1-\theta_-)\mu_-},
\qquad t\in[0,\epsilon_-],
\]
one has
\[
|\nabla g(x)|=|\nabla(-g)(x)|\ge \frac{1}{\psi_-'(-g(x))}
\]
for all \(x\in M\) satisfying \(-\epsilon_-\le g(x)<0\).

Set
\[
\epsilon:=\min\{\epsilon_+,\epsilon_-,1\},\qquad
\theta:=\max\{\theta_+,\theta_-\},\qquad
\mu:=\min\{\mu_+,\mu_-\},
\]
and define \(\psi:\mathbb R\to\mathbb{R}\) by
\[
\psi(t):=\mathrm{sgn}(t)\frac{|t|^{1-\theta}}{(1-\theta)\mu}.
\]
Then \(\psi'(t)>0\) for all \(t\neq 0\), and
\begin{equation}\label{eq:signed_KL}
    |\nabla g(x)|\ge \frac{1}{\psi'(g(x))}=\mu |g(x)|^\theta
\end{equation}
for all \(x\in M\) such that \(0<|g(x)|\le \epsilon\). Since \(\epsilon\le 1\),
this inequality remains valid after increasing \(\theta\). We may therefore
assume from now on that \(\theta\in(0,1)\). With this choice, the above
inequality also holds when \(g(x)=0\). We will refer to any such function
\(\psi\) as a \emph{desingularizing function} of \(g\) on $[|g|\le \epsilon]$.

Recall that bounded continuous-time subgradient trajectories converge because their lengths are finite and are governed by function values through desingularizing functions \cite{lojasiewicz1982trajectoires,kurdyka1998gradients,bolte2010characterizations}. In contrast, the length of subgradient sequences is not finite. Inspired by recent works on the active saddle avoidance properties of stochastic subgradient sequences \cite{bianchi2023stochastic,davis2025active}, we study the projections of subgradient sequences onto a smooth manifold where $f$ is smooth, as outlined in the following lemma.

\begin{lemma}\label{lemma:drop of fy}
Let \(f:\mathbb{R}^n\to\mathbb{R}\), \(M\subset \mathbb{R}^n\) be a \(C^3\) manifold, and assume that \(f|_M\) is \(C^2\).
Assume that there exist \(L> 1\), \(U\subset \mathbb{R}^n\), and \(\epsilon>0\) such that
\begin{itemize}
    \item \(P_M\) is single-valued, \(C^2\), and \(L\)-Lipschitz continuous on \(U\),
    \item \(f\) is \(L\)-Lipschitz continuous on \(U\), and $f|_{M}$ is $L$-Lipschitz continuous on $P_M(U)$,
    \item \(\psi:\mathbb{R}\to\mathbb{R}\) is a desingularizing function of \(f|_M\) on
    \([ |f|\le \epsilon]\).
\end{itemize}

Let \((x_k)_{k=0}^K\) be a subgradient sequence with step sizes \((\alpha_k)_{k=0}^{K-1}\), and set
\[
y_k:=P_M(x_k),
\qquad
d_k:=d(x_k,M),
\qquad k\in \llbracket 0,K\rrbracket.
\]
Assume that
\[
 B(x_k,\alpha_k L)\subset U
\qquad\text{and}\qquad
y_k\in [ |f|\le \epsilon]
\quad \text{for all } k\in \llbracket 0,K-1\rrbracket.
\]
Assume moreover that there exist constants \(L_{f,k},L_{V,k},L_{P,k}\ge L\) for
\(k\in \llbracket 0,K-1\rrbracket\) such that, for each such \(k\),
\begin{enumerate}
    \item $|\nabla_M f(x)-\nabla_M f(x')|
    \le L_{f,k}|x-x'|$, for all $x,x'\in B(y_k,\alpha_kL^2)\cap M$,
    \item $|P_{T_M(y_k)}(v_k)-\nabla_M f(y_k)|
    \le L_{V,k}|x_k-y_k|$, for all $v_k\in \partial f(x_k)$,

    \item $|DP_M(x)-DP_M(x')|
    \le L_{P,k}|x-x'|$ for all $x,x'\in B(x_k,\alpha_kL)\cup\{y_k\}$.
\end{enumerate}

Let \(g_0,\dots,g_K\) be any nonnegative scalars such that
\[
g_k-g_{k+1}
\ge
\frac{\alpha_kL_{V,k}^2d_k^2}{2}
+L^2\alpha_kL_{P,k}d_k
+\frac{L^4\alpha_k^2}{2}(L_{f,k}+L_{P,k})
\qquad
\forall k\in \llbracket 0,K-1\rrbracket .
\]
Define $z_k:=f(y_k)+g_k$ for $k\in \llbracket 0,K\rrbracket$. Then
\(
z_0\ge z_1\ge \cdots \ge z_K
\)
and
\begin{align*}
\sum_{k=0}^{K-1}|y_{k+1}-y_k|
&\le 2L\bigl(\psi(z_0)-\psi(z_K)\bigr) \\
&\quad
+\sum_{k=0}^{K-1}
\left(
L^3L_{f,k}\alpha_k^2
+L\alpha_kL_{V,k}d_k
+\frac{L\alpha_k}{\psi'(g_k)}
\right)
+L^2\max_{k\in \llbracket 0,K-1\rrbracket}\alpha_k .
\end{align*}
\end{lemma}
\begin{proof}
Since $B(x_k,\alpha_kL)\subset U$, the map $P_M$ is $C^2$ on $B(x_k,\alpha_kL)$.
As $f|_M$ is $C^2$, the composition $f\circ P_M$ is $C^2$ on $B(x_k,\alpha_kL)$. For any $x,x'\in B(x_k,\alpha_k L)$, we have
\begin{subequations}\label{eq:gradient_holder}
    \begin{align}
        &| \nabla (f\circ P_M)(x)-  \nabla (f\circ P_M)(x')|\\
        =& |D P_M(x) \nabla_M f(P_M(x)) - D P_M(x') \nabla_M f(P_M(x'))|\\
        \le& |D P_M(x) (\nabla_M f(P_M(x)) - \nabla_M f(P_M(x'))) | \notag\\
        &\quad + |(D P_M(x) - D P_M(x')) \nabla_M f(P_M(x'))|\\
        \le&L\times L_{f,k} |P_M(x) - P_M(x')|
        + L_{P,k} |x - x'|\times L\label{eq:gradient_holder_divide}\\
        \le& L^{2}L_{f,k} |x - x'| + LL_{P,k} |x - x'|\\
        \le& L^2(L_{f,k}+L_{P,k}) |x - x'|.
    \end{align}
\end{subequations}
Since $x_{k+1}\in B(x_k,\alpha_kL)$, a bound on the Taylor expansion of $f\circ P_M$ yields
\begin{subequations}
\label{eq:approximate f(y)}
        \begin{align}
        &f(y_{k+1})-f(y_k)\\
        =& f\circ P_M(x_{k+1}) - f\circ P_M(x_{k})\\
        \le& \langle x_{k+1}-x_k,\nabla (f\circ P_M)(x_k)\rangle +\frac{L^2(L_{f,k}+L_{P,k})}{2} |x_{k+1}-x_k|^{2} \label{eq:taylor-c}\\
        =&-\alpha_k\langle v_k,\nabla (f\circ P_M)(x_k)\rangle +\frac{L^2(L_{f,k}+L_{P,k})}{2}\alpha_k^{2}|v_k|^{2}\label{eq:taylor-d}\\
        \le& -\alpha_k\langle v_k,\nabla_Mf(y_k)\rangle -\alpha_k \langle  v_k,(D P_M(x_k) - D P_M(y_k))\nabla_M f(y_k)\rangle \notag\\
        &\quad + \frac{L^4(L_{f,k}+L_{P,k})}{2}\alpha_k^{2}\label{eq:taylor-e}\\
        \le & -\alpha_k\langle v_k,\nabla_Mf(y_k)\rangle+\alpha_k L^2L_{P,k}  d_k  + \frac{L^4(L_{f,k}+L_{P,k})}{2}\alpha_k^{2}\label{eq:taylor-f}
    \end{align}
\end{subequations}
where $v_k\in \partial f(x_k)$. Above, \eqref{eq:taylor-c} is due to the local Lipschitz continuity of $\nabla(f\circ P_M)$. \eqref{eq:taylor-e} follows from the fact that $\nabla (f\circ P_M)(y_k) = DP_M(y_k) \nabla_M f(y_k) = P_{T_M(y_k)}(\nabla_M f(y_k)) = \nabla_M f(y_k)$ by \cite[Theorem 4.1]{dudek1994nonlinear}. Finally, \eqref{eq:taylor-f} follows from the Lipschitz continuity of $D P_M$. Since $|P_{T_{M}(y_k)}(v_k)-\nabla_Mf(y_k)| \le L_{V,k} d_k$ for all $v_k\in \partial f(x_k)$, we have
    \begin{align*}
         L_{V,k}^2 d_k^2
         &\ge |P_{T_{M}(y_k)}(v_k)-\nabla_Mf(y_k)|^2\\
         &=|P_{T_{M}(y_k)}(v_k)|^2+|\nabla_M f(y_k)|^2 -2\langle P_{T_{M}(y_k)}(v_k),\nabla_Mf(y_k)\rangle\\
         &= |P_{T_{M}(y_k)}(v_k)|^2+|\nabla_M f(y_k)|^2 -2\langle v_k,\nabla_Mf(y_k)\rangle
    \end{align*}
as $\nabla_M f(y_k)\in T_{M}(y_k)$.
Combining the above two inequalities, we have
    \begin{align*}
    2(f(y_{k+1})-f(y_k))
    &\le -\alpha_k|P_{T_{M}(y_k)}(v_k)|^2-\alpha_k|\nabla_Mf(y_k)|^2\\
    &\quad +\alpha_k L_{V,k}^2  d_k^2+2\alpha_k L^2L_{P,k}  d_k\\
    &\quad + \alpha_k^2 L^4(L_{f,k}+L_{P,k})\\
    &\le \alpha_k L_{V,k}^2  d_k^2+2L^2\alpha_k L_{P,k}  d_k \\
    &\quad + L^4\alpha_k^2 (L_{f,k}+L_{P,k})-\alpha_k|\nabla_Mf(y_k)|^2\\
    &\le 2g_k-2g_{k+1}-\alpha_k|\nabla_Mf(y_k)|^2,
\end{align*}
where $g_0 \ge  g_1\ge\cdots \ge g_K \ge 0$ are any scalars that satisfy
\begin{equation*}
    g_{k} - g_{k+1}
    \ge \alpha_k L_{V,k}^2  d_k^2/2
    +L^2\alpha_k L_{P,k}  d_k
    + L^4\alpha_k^2 (L_{f,k}+L_{P,k})/2
\end{equation*}
for $k\in \llbracket 0,K-1\rrbracket$. Thus,
\begin{equation}\label{eq:gradient descent}
    \frac{1}{2} \alpha_k|\nabla_Mf(y_k)|^2 \le z_k - z_{k+1},
\end{equation}
where $z_k:= f(y_k) + g_k$. If follows that $z_k$ is decreasing. We first suppose that $(z_{K},z_0)$ excludes $0$. Since $z_0\ge  z_1\ge  \dots \ge  z_{K}$, either $z_0\le 0$ or $z_K \ge 0$. In the first case, we have
\begin{subequations}
\label{eq:first case delta phi}
\begin{align}
    \psi(z_k)-\psi(z_{k+1}) &= \psi(|z_{k+1}|) - \psi(|z_k|)\\
    &\ge  (|z_{k+1}|-|z_{k}|)\psi'(|z_{k+1}|)\label{eq:paritition 1}
    \\&=(z_k-z_{k+1})\psi'(|f(y_{k+1})+g_{k+1}|))\label{eq:paritition 2}
    \\&\ge  (z_k-z_{k+1})\psi'(|f(y_{k+1})|+g_{k+1})\label{eq:paritition 3}
    \\&\ge  \frac{\frac{1}{2}\alpha_k|\nabla_Mf(y_k)|^2}{1/\psi'(|f(y_{k+1})|)+1/\psi'(g_{k+1})}\label{eq:paritition 4}
    \\&\ge  \frac{\frac{1}{2}\alpha_k|\nabla_Mf(y_k)|^2}{|\nabla_Mf(y_{k+1})|+1/\psi'(g_{k+1})}.\label{eq:paritition 5}
\end{align}
\end{subequations}
Indeed, \eqref{eq:paritition 1} and \eqref{eq:paritition 3} hold because $\psi$ is concave on $[0,\infty)$. \eqref{eq:paritition 4} is due to the subadditivity of $1/\psi'$ on $[0,\infty)$. We apply the definition of desingularizing function to get the last inequality \eqref{eq:paritition 5}.  By the AM-GM inequality, we have

    \begin{align*}
    |\nabla_Mf(y_k)|
    &\le\sqrt{\frac{2}{\alpha_k}(\psi(z_k)-\psi(z_{k+1}))\left(|\nabla_Mf(y_{k+1})|+1/\psi'(g_{k+1})\right)}
    \\
    &\le\frac{1}{\alpha_k}(\psi(z_k)-\psi(z_{k+1}))
    +\frac{1}{2}\left(|\nabla_Mf(y_{k+1})|+1/\psi'(g_{k+1})\right).
\end{align*}
In addition,
    \begin{align*}
    |\nabla_Mf(y_{k+1})|
    &\le|\nabla_Mf(y_k)|+L_{f,k}|y_{k+1}-y_k|\\
        &= |\nabla_Mf(y_k)|+L_{f,k}|P_M(x_{k+1}) - P_M(x_{k})| \\
        &\le |\nabla_Mf(y_k)|+LL_{f,k}|x_{k+1} - x_k|\\
        &\le |\nabla_Mf(y_k)|+L^2L_{f,k} \alpha_k.
    \end{align*}
Thus, since $1/\psi'(g_{k})$ is decreasing, we have 
    \begin{align*}
        \frac{1}{2}\alpha_k|\nabla_Mf(y_k)|
        \le \psi(z_k)-\psi(z_{k+1})
        +\frac{L^2L_{f,k} \alpha_k^2+\alpha_k/\psi'(g_{k})}{2}.
    \end{align*}
Telescoping yields

\begin{align}\label{eq:first case telescope}
   \sum_{k=0}^{K-1}\alpha_k|\nabla_Mf(y_k)|
   &\le 2\left(\psi(z_0)-\psi(z_K)\right)  +\sum_{k=0}^{K-1}
   \left(
   L^2L_{f,k} \alpha_k^2+\frac{\alpha_k}{\psi'(g_{k})}
   \right).
\end{align}

In the second case, i.e. $z_K\ge 0$, following similar arguments as in \eqref{eq:first case delta phi}, we have
\begin{align*}
    \psi(z_k)-\psi(z_{k+1})
    \ge  \frac{\frac{1}{2}\alpha_k|\nabla_Mf(y_k)|^2}{|\nabla_Mf(y_k)|+1/\psi'(g_{k})}.
\end{align*}
Note that 
    \begin{align*}
    |\nabla_Mf(y_k)|
    &\le  \sqrt{\frac{2}{\alpha_k}(\psi(z_{k})-\psi(z_{k+1}))(|\nabla_Mf(y_{k})|+1/\psi'(g_{k}))}
    \\
    &\le  \frac{1}{\alpha_k}(\psi(z_{k})-\psi(z_{k+1}))
    +\frac{1}{2}(|\nabla_Mf(y_{k})|+1/\psi'(g_{k})).
\end{align*}
Thus, we have
\begin{align*}
    \alpha_k|\nabla_Mf(y_k)|
    \le \psi(z_{k})-\psi(z_{k+1})
    +\frac{\alpha_k}{2}(|\nabla_Mf(y_{k})|+1/\psi'(g_{k})).
\end{align*}
Telescoping yields
\begin{align}\label{eq:second case telescope}
   \sum_{k=0}^{K-1}\alpha_k|\nabla_Mf(y_k)|
   &\le   2(\psi(z_0)-\psi(z_K))
   +\sum_{k=0}^{K-1}\frac{\alpha_k}{\psi'(g_{k})}.
\end{align}
In view of \eqref{eq:first case telescope} and \eqref{eq:second case telescope}, we have \eqref{eq:first case telescope} holds as long as $(z_K,z_0)$ excludes 0.

Now suppose that $z_{0}\ge \dots\ge  z_{\underline{K}}>0 = z_{\underline{K}+1}=\dots=z_{\overline{K}}>z_{\overline{K}+1}\ge \dots\ge  z_{K}$, for some $0<\underline{K}\le  \overline{K}<K$. According to \eqref{eq:gradient descent}, we have $\alpha_k|\nabla_M f(y_k)|=0$ for all $\underline{K}+1\le  k<\overline{K}$. As both $(z_{\underline{K}},z_0)$ and $(z_{K},z_{\overline{K}})$ exclude $0$, we have
    \begin{align*}
        \sum_{k=0}^{K-1}\alpha_k|\nabla_Mf(y_k)|
        &=\sum_{k=0}^{\underline{K}-1}\alpha_k|\nabla_Mf(y_k)|
        +\alpha_{\underline{K}}|\nabla_Mf(y_{\underline{K}})|  + \sum_{k=\overline{K}}^{K-1}\alpha_k|\nabla_Mf(y_k)|\\
        &\le 2\left(\psi(z_0)-\psi(z_{\underline{K}})\right)
        + 2\left(\psi(z_{\overline{K}})-\psi(z_{K})\right) \\
        &\quad +\sum_{k=0}^{K-1}
        \left(
        L^2L_{f,k} \alpha_k^2+\frac{\alpha_k}{\psi'(g_{k})}
        \right)
        +L\max_{k\in \llbracket 0,K-1\rrbracket}\alpha_k\\
        &\le 2\left(\psi(z_0)-\psi(z_{K})\right)
        +\sum_{k=0}^{K-1}
        \left(
        L^2L_{f,k} \alpha_k^2+\frac{\alpha_k}{\psi'(g_{k})}
        \right)
        +L\max_{k\in \llbracket 0,K-1\rrbracket}\alpha_k.
    \end{align*}
Finally,  we have
\begin{subequations}
    \begin{align}
        |y_{k+1} - y_k| &= | P_M(x_{k+1})-P_M(x_{k})|\\
        &=| P_M(x_{k}-\alpha_k v_k)-P_M(x_{k}-\alpha_k P_{N_{M}(y_k)}(v_k))|\label{eq:projection eq}\\
        &\le L\alpha_k|v_k - P_{N_{M}(y_k)}(v_k)|\\
        &=L\alpha_k| P_{T_{M}(y_k)}(v_k)|\\
        &\le L\alpha_k| P_{T_{M}(y_k)}(v_k) - \nabla_M f(y_k)| \notag\\
        &\quad + L\alpha_k |\nabla_M f(y_k)|\\
        &\le L\alpha_k L_{V,k} d_k+ L\alpha_k |\nabla_M f(y_k)|,
    \end{align}
\end{subequations}
where \eqref{eq:projection eq} holds due to \cite[(3.13) Theorem]{dudek1994nonlinear} and $x_k-\alpha_kP_{N_M(y_k)}(v_k)\in B(x_k,\alpha_kL)\subset U$. Therefore, in all of the above cases, we have
    \begin{align*}
        \sum_{k=0}^{K-1}|y_{k+1}-y_k|
        &\le \sum_{k=0}^{K-1}  L\alpha_k L_{V,k} d_k+ \alpha_kL |\nabla_M f(y_k)|\\
        &\le 2L\left(\psi(z_0)-\psi(z_{K})\right)
        +\sum_{k=0}^{K-1}L^3L_{f,k} \alpha_k^2 \\
        &\quad +\sum_{k=0}^{K-1}
        \left(
        L \alpha_k L_{V,k} d_k 
        +\frac{L\alpha_k}{\psi'(g_{k})}
        \right)
        +L^2\max_{k\in \llbracket 0,K-1\rrbracket}\alpha_k
        \\
        &=2L\left(\psi(z_0)-\psi(z_{K})\right)+\sum_{k=0}^{K-1}
        \left(
        L^3L_{f,k} \alpha_k^2
        +L\alpha_k L_{V,k} d_k 
        +\frac{L\alpha_k}{\psi'(g_{k})}
        \right) \\
        &\quad +L^2\max_{k\in \llbracket 0,K-1\rrbracket}\alpha_k.
    \end{align*}
\end{proof}
Lemma \ref{lemma:drop of fy} allows one to estimate the distance traveled by the iterates along a smooth manifold using the variation on function values, given that the iterates stay close to the manifold. This result has its counterpart in the literature for smooth functions (for e.g., \cite[Proposition 8]{josz2023global}\cite[Proposition 4.12]{josz2024proximal}), which also study the length of trajectories via change in function values. The main difference is that those results apply directly to the sequence $(x_k)_{k\in \mathbb{N}}$, while Lemma \ref{lemma:drop of fy} holds only for the projected sequence $(y_k)_{k\in \mathbb{N}}$. In the proof of Lemma \ref{lemma:drop of fy}, we derive an approximate descent property of the projected sequence (see equation \eqref{eq:gradient descent}). The upper bound on its length then follows by a similar line of reasoning as in \cite[Theorem 3.6]{li2023convergence} and \cite[Proposition 4.12]{josz2024proximal}, which analyze the sequences of (proximal) random reshuffling algorithms that share a similar property \cite[Lemma 3.2]{li2023convergence}.

While the assumptions posed by Lemma \ref{lemma:drop of fy} appear to be quite complicated, they can be satisfied if we consider the decomposition of definable functions studied in Section \ref{sec:decomposition}. This is the object of the following corollary, where we verify the assumptions required in Lemma \ref{lemma:drop of fy} and obtain a simpler expression for the length of the projected sequence. Note that the regions $\mathcal N(\cdot,\cdot)$ defined in the following corollary are different from $\mathcal N_0(\cdot,\cdot)$ given by Proposition \ref{proposition:neighborhoods of cells}.

\begin{corollary}\label{cor:y_k^i}
Let \(f:\mathbb{R}^n\to\mathbb{R}\) be a locally Lipschitz definable function, and let \(X\subset \mathbb{R}^n\) be definable and compact. Then the following holds:
\begin{enumerate}
    \item There exist constants
\[
C,\hat\beta>0,
\qquad
\epsilon,\hat\alpha,\theta\in(0,1),
\]
a stratification \(\{M_1,\ldots,M_T\}\) of \(X\), and constants \(c_i,\beta_i,\gamma_i>0\), \(i\in \llbracket 1,T\rrbracket\), such that
\[
\hat\beta<\beta_i<\gamma_i(1-\theta)<1
\qquad
\text{for all } i\in \llbracket 1,T\rrbracket,
\]
\[
\forall i,j\in \llbracket 1,T\rrbracket,
\qquad
M_j\subset \partial M_i \implies \beta_i>\gamma_j.
\]
\item For any $i,j\in\llbracket 1,T\rrbracket$ such that $\overline{M_i}\cap M_j=M_i\cap\overline{M_j}=\emptyset$, we have
\[\bigcup_{\alpha\in(0,1]}\mathcal{N}(i,\alpha)\cap \bigcup_{\alpha\in(0,1]}\mathcal{N}(j,\alpha)=\emptyset,\]
where
\[
\mathcal N(i,\alpha)
:=
\bigl(X\cap [|f|\le \epsilon]\bigr)
\cap
\left(
B(M_i,c_i\alpha^{\beta_i})
\setminus
\bigcup_{j:\, M_j\subset \partial M_i} B(M_j,c_j\alpha^{\gamma_j})
\right).
\]
\item For any \(i\in \llbracket 1,T\rrbracket\), any \(K\in \mathbb N\), and any subgradient sequence \((x_k)_{k\in \mathbb N}\) with step sizes \((\alpha_k)_{k\in \mathbb N}\) satisfying
\[
\alpha_k\in (0,\hat\alpha]
\qquad\text{for all } k\in \llbracket 0,K-1\rrbracket,
\]
and
\[
x_k\in \mathcal N(i,\alpha_k)
\qquad\text{for all } k\in \llbracket 0,K\rrbracket,
\]
we have
\begin{equation*}
\mathrm{sgn}(z_0^i)|z_0^i|^{1-\theta}
-\mathrm{sgn}(z_K^i)|z_K^i|^{1-\theta}
\ge
\frac{1}{C}\sum_{k=0}^{K-1}|y_{k+1}^i-y_k^i|
-\sum_{k=0}^{K-1}\left(\alpha_k^{1+\hat\beta}+\alpha_k g_k^\theta\right)
-\max_{k\in \llbracket 0,K-1\rrbracket}\alpha_k,
\end{equation*}
where
\[
y_k^i:=P_{M_i}(x_k),
\qquad
z_k^i:=f(y_k^i)+g_k,
\]
and \(g_0,\ldots,g_K\) are any nonnegative scalars such that
\[
g_k-g_{k+1}\ge C\alpha_k^{1+\hat\beta}
\quad\text{for all } k\in \llbracket 0,K-1\rrbracket.
\]
In addition, $z_0^i\ge z_1^i\ge \cdots \ge z_K^i$.
\end{enumerate}
\end{corollary}
\begin{proof}
We organize the proof into six steps. In Step~1, we choose a desingularizing function that works uniformly for the restrictions of the objective function to all strata. In Step~2, for each stratum \(M_i\), we fix the constants \(c_i\), \(\beta_i\), and \(\gamma_i\), and apply Proposition~\ref{proposition:neighborhoods of cells} to obtain the corresponding neighborhoods and estimates. In Step~3, we choose the upper bound \(\hat{\alpha}\) for the step sizes so that all subsequent inclusions and estimates hold uniformly. In Step 4, we prove that the given neighborhoods satisfy the nonintersecting conditions. In Step~5, we verify the assumptions of Lemma~\ref{lemma:drop of fy} for iterates lying in \(\mathcal N(i,\alpha_k)\). In Step~6, we apply Lemma~\ref{lemma:drop of fy} and simplify the resulting error terms to conclude the proof.

\medskip
\noindent\underline{Step 1: Uniform desingularizing function on the strata.}
Applying Proposition~\ref{prop:function_Lipschitz} with \(m=3\) and \(U=X\), we obtain
a stratification \(\{M_1,\ldots,M_T\}\) of \(X\) such that \(f|_{M_i}\) is \(C^3\)
for every \(i\in \llbracket 1,T\rrbracket\).

For each \(i\in \llbracket 1,T\rrbracket\), apply the signed
\L{}ojasiewicz gradient inequality in \eqref{eq:signed_KL} to the function
\(f|_{M_i}\). Since there are only finitely many strata, after shrinking the
corresponding neighborhoods and enlarging the exponents if necessary, we may choose
constants
\[
\epsilon\in(0,1),\qquad \theta\in(0,1),\qquad \mu>0
\]
such that
\[
\psi(t):=\frac{\operatorname{sgn}(t)|t|^{1-\theta}}{(1-\theta)\mu}
\]
is a desingularizing function of \(f|_{M_i}\) on \([|f|\le 2\epsilon]\)
for every \(i\in \llbracket 1,T\rrbracket\). 

\medskip
\noindent\underline{Step 2: Choice of the constants.}
Fix \(L>1\) such that \(f\) is \(L\)-Lipschitz on \(X\) and $f|_{M_i}$ is $L$-Lipschitz for each $i\in \llbracket 1,T\rrbracket$. Applying Proposition~\ref{proposition:neighborhoods of cells} to the stratification $\{M_i\}$ yields constants \(\eta_i\ge 1\), \(i\in\llbracket 1,T\rrbracket\). We now choose
\(\gamma_i\) and \(\beta_i\) so that the admissibility condition
\[
M_j\subset \partial M_i \implies \eta_i\gamma_j\le \beta_i
\]
is satisfied, and
\[
0<\beta_i<\gamma_i(1-\theta)<1,
\qquad
M_j\subset \partial M_i \implies \beta_i>\gamma_j.
\]

Set
\[
d_*:=\max_{i\in\llbracket 1,T\rrbracket}\dim M_i,
\qquad
\eta_*:=\max_{i\in\llbracket 1,T\rrbracket}\eta_i,
\]
and choose
\[
\delta\in\Bigl(0,\frac{1-\theta}{2\eta_*}\Bigr).
\]
For each \(i\in\llbracket 1,T\rrbracket\), define
\[
\gamma_i:=\delta^{\,d_*-\dim M_i+1}.
\]
Then \(0<\gamma_i<1\) for all \(i\). Moreover, if \(M_j\subset \partial M_i\), then
\(\dim M_j<\dim M_i\), and hence
\[
\gamma_j
=
\delta^{\,d_*-\dim M_j+1}
\le
\delta^{\,d_*-\dim M_i+2}
=
\delta\,\gamma_i.
\]
Therefore, for every such pair \((i,j)\),
\[
\eta_i\gamma_j
\le
\eta_*\delta\,\gamma_i
\le 
\frac{1-\theta}{2}\gamma_i
<
(1-\theta)\gamma_i.
\]
Hence, for each \(i\in\llbracket 1,T\rrbracket\), we may choose
\[
\beta_i\in\Bigl(
\max\{0,\sup_{j:\,M_j\subset\partial M_i}\eta_i\gamma_j\},
\ (1-\theta)\gamma_i
\Bigr).
\]
 With this choice,
\[
0<\beta_i<\gamma_i(1-\theta)<1.
\]
Furthermore, if \(M_j\subset \partial M_i\), then
\[
\beta_i>\eta_i\gamma_j\ge \gamma_j
\]
because \(\eta_i\ge 1\). Thus the families \((\beta_i)\) and \((\gamma_i)\) satisfy
all the hypotheses of Proposition~\ref{proposition:neighborhoods of cells}.

We now apply Proposition~\ref{proposition:neighborhoods of cells} with these choices.
It yields constants
\[
 c_i,c_{1,i},c_{2,i},c_{3,i}>0,
\qquad
\omega_{1,i},\omega_{2,i},\omega_{3,i}\in[0,\beta_i/4],
\qquad
i\in\llbracket 1,T\rrbracket,
\]
such that, for every \(\alpha\in(0,1]\), if we define
\[
\mathcal N_0(i,\alpha)
:=
X\cap
B\!\left(
M_i\setminus \bigcup_{j:M_j\subset\partial M_i}B(M_j,c_j\alpha^{\gamma_j}/2),
\,2c_i\alpha^{\beta_i}
\right),
\]
then \(P_{M_i}\) is \(L\)-Lipschitz and \(C^2\) on
\(\bigcup_{\alpha\in(0,1]}\mathcal N_0(i,\alpha)\), for any $j\in\llbracket 1,T\rrbracket$ such that $\overline{M_i}\cap M_j=M_i\cap\overline{M_j}=\emptyset$, we have
\begin{equation}\label{eq:N_0_no_intersect}
    \bigcup _{\alpha\in(0,1]}\mathcal{N}_0(i,\alpha)\cap\bigcup_{\alpha\in(0,1]}\mathcal{N}_0(j,\alpha)=\emptyset,
\end{equation}
and
\begin{align}
\label{eq:cor-proof-L1}
|\nabla_{M_i}f(x)-\nabla_{M_i}f(x')|
&\le \frac{c_{1,i}}{\alpha^{\omega_{1,i}}}|x-x'|,
&&\forall x,x'\in \mathcal N_0(i,\alpha)\cap M_i,\\
\label{eq:cor-proof-L2}
|P_{T_{M_i}(y)}(v)-\nabla_{M_i}f(y)|
&\le \frac{c_{2,i}}{\alpha^{\omega_{2,i}}}|x-y|,
&&\forall x\in \mathcal N_0(i,\alpha),\ y\in \mathcal N_0(i,\alpha)\cap M_i,\ v\in\partial f(x),\\
\label{eq:cor-proof-L3}
|DP_{M_i}(x)-DP_{M_i}(x')|
&\le \frac{c_{3,i}}{\alpha^{\omega_{3,i}}}|x-x'|,
&&\forall x,x'\in \mathcal N_0(i,\alpha).
\end{align}

Define
\[
\omega_i:=\max\{\omega_{1,i},\omega_{2,i},\omega_{3,i}\}\in[0,\beta_i/4],
\qquad
A:=\max\Bigl\{L,\max_{i\in\llbracket 1,T\rrbracket}\max\{c_{1,i},c_{2,i},c_{3,i}\}\Bigr\}.
\]
Since \(\alpha\in(0,1]\), from \eqref{eq:cor-proof-L1}--\eqref{eq:cor-proof-L3} we obtain
\begin{align*}
|\nabla_{M_i}f(x)-\nabla_{M_i}f(x')|
&\le \frac{A}{\alpha^{\omega_i}}|x-x'|,
&&\forall x,x'\in \mathcal N_0(i,\alpha)\cap M_i,\\
|P_{T_{M_i}(y)}(v)-\nabla_{M_i}f(y)|
&\le \frac{A}{\alpha^{\omega_i}}|x-y|,
&&\forall x\in \mathcal N_0(i,\alpha),\ y\in \mathcal N_0(i,\alpha)\cap M_i,\ v\in\partial f(x),\\
|DP_{M_i}(x)-DP_{M_i}(x')|
&\le \frac{A}{\alpha^{\omega_i}}|x-x'|,
&&\forall x,x'\in \mathcal N_0(i,\alpha).
\end{align*}

Finally, set
\begin{equation}\label{eq:cor-beta}
\hat\beta:=\min_{i\in\llbracket 1,T\rrbracket}\min\{\beta_i-\omega_i,\ 1-\omega_i\}>0.
\end{equation}
This is well defined because \(\omega_i\le \beta_i/4<\beta_i<1\).

\medskip
\noindent\underline{Step 3: Choice of \(\hat{\alpha}\).}
Since there are only finitely many relevant pairs \((i,j)\) with \(M_j\subset \partial M_i\),
and since \(\beta_i>\gamma_j\) and \(\beta_i<1\), we may choose
\(\hat{\alpha}\in(0,1)\) so small that, for every \(\alpha\in(0,\hat{\alpha}]\),
\begin{align}
L\alpha &\le c_i\alpha^{\beta_i},
&&\forall i\in\llbracket 1,T\rrbracket,\label{eq:cor-small-1}\\
c_i\alpha^{\beta_i}+L^2\alpha &\le \frac{c_j}{3}\alpha^{\gamma_j},
&&\forall i,j\in\llbracket 1,T\rrbracket \text{ with } M_j\subset \partial M_i,\label{eq:cor-small-2}\\
Lc_i\alpha^{\beta_i} &\le \epsilon,
&&\forall i\in\llbracket 1,T\rrbracket.\label{eq:cor-small-3}
\end{align}

\medskip
\noindent\underline{Step 4: Nonintersecting neighborhoods.} For all $i\in \llbracket 1,T\rrbracket$ and $\alpha \in (0,1]$, let
\[\mathcal N(i,\alpha)
:=
(X\cap[|f|\le \epsilon])\cap
\left(
B(M_i,c_i\alpha^{\beta_i})
\setminus \bigcup_{j:M_j\subset\partial M_i}B(M_j,c_j\alpha^{\gamma_j})
\right).\]
For any $j\in\llbracket 1,T\rrbracket$ such that $\overline{M_i}\cap M_j=M_i\cap\overline{M_j}=\emptyset$, to prove
\[\bigcup_{\alpha\in(0,1]}\mathcal{N}(i,\alpha)\cap \bigcup_{\alpha\in(0,1]}\mathcal{N}(j,\alpha)=\emptyset,\]
it suffices to show that
\[\mathcal N(i,\alpha)\subset \mathcal N_0(i,\alpha),\]
thanks to \eqref{eq:N_0_no_intersect}.

To prove the above inclusion, consider any $i\in \llbracket 1,T\rrbracket,\alpha\in (0,1]$, and $x\in \mathcal N(i,\alpha)$. There exists $y\in M_i \setminus \bigcup_{j:M_j\subset\partial M_i}B(M_j,c_j\alpha^{\gamma_j})$ such that $|x - y| \le c_i\alpha^{\beta_i}$. Thus,
\[x\in X\cap B\left(M_i \setminus \bigcup_{j:M_j\subset\partial M_i}B(M_j,c_j\alpha^{\gamma_j}),c_i\alpha^{\beta_i}\right)\subset \mathcal N_0(i,\alpha).\]
This proves item 2 in the corollary.

\medskip
\noindent\underline{Step 5: Verification of the assumptions of Lemma~\ref{lemma:drop of fy}.}
Fix \(i\in\llbracket 1,T\rrbracket\), and let \((x_k)_{k\in\mathbb N}\) be a subgradient
sequence with \(\alpha_{\llbracket 0,K-1\rrbracket}\subset(0,\hat{\alpha}]\) and
\[
x_k\in \mathcal N(i,\alpha_k)
:=
(X\cap[|f|\le \epsilon])\cap
\left(
B(M_i,c_i\alpha_k^{\beta_i})
\setminus \bigcup_{j:M_j\subset\partial M_i}B(M_j,c_j\alpha_k^{\gamma_j})
\right)
\]
for all \(k\in\llbracket 0,K\rrbracket\).

We verify Lemma~\ref{lemma:drop of fy} with the following correspondence:
\[
M=M_i,\qquad U=U_i:=\bigcup_{\alpha\in(0,1]}\mathcal N_0(i,\alpha),\qquad
y_k=P_{M_i}(x_k),\qquad d_k=d(x_k,M_i),
\]
and
\[
L_{f,k}=L_{V,k}=L_{P,k}:=\frac{A}{\alpha_k^{\omega_i}}
\qquad\text{for }k\in\llbracket 0,K-1\rrbracket.
\]
We now check the required assumptions.

For each \(k\in\llbracket 0,K\rrbracket\), choose \(\xi_k\in M_i\) such that
\[
|x_k-\xi_k|\le c_i\alpha_k^{\beta_i}.
\]
If \(M_j\subset \partial M_i\), then
\[
d(\xi_k,M_j)\ge d(x_k,M_j)-|x_k-\xi_k|
\ge c_j\alpha_k^{\gamma_j}-c_i\alpha_k^{\beta_i}
\ge \frac{2c_j}{3}\alpha_k^{\gamma_j}
>\frac{c_j}{2}\alpha_k^{\gamma_j}
\]
by \eqref{eq:cor-small-2}. Hence
\[
\xi_k\in
M_i\setminus \bigcup_{j:M_j\subset\partial M_i}B(M_j,c_j\alpha_k^{\gamma_j}/2).
\]

We first show that
\begin{equation}\label{eq:cor-ball-xk}
B(x_k,L\alpha_k)\subset \mathcal N_0(i,\alpha_k).
\end{equation}
Indeed, for any \(u\in B(x_k,L\alpha_k)\),
\[
|u-\xi_k|
\le |u-x_k|+|x_k-\xi_k|
< L\alpha_k+c_i\alpha_k^{\beta_i}
\le 2c_i\alpha_k^{\beta_i}
\]
by \eqref{eq:cor-small-1}. Since \(\xi_k\) belongs to the inner set defining
\(\mathcal N_0(i,\alpha_k)\), this gives \(u\in \mathcal N_0(i,\alpha_k)\), proving
\eqref{eq:cor-ball-xk}. In particular, \(P_{M_i}\) is well defined at \(x_k\).

Set
\[
y_k^i:=P_{M_i}(x_k),
\qquad
d_k^i:=|x_k-y_k^i|=d(x_k,M_i).
\]
Since \(x_k\in B(M_i,c_i\alpha_k^{\beta_i})\), we have
\begin{equation}\label{eq:cor-dk}
d_k^i\le c_i\alpha_k^{\beta_i}.
\end{equation}

Next we prove that
\begin{equation}\label{eq:cor-ball-yk}
B(y_k^i,L^2\alpha_k)\cap M_i\subset \mathcal N_0(i,\alpha_k)\cap M_i.
\end{equation}
Fix \(u\in B(y_k^i,L^2\alpha_k)\cap M_i\), and let \(M_j\subset \partial M_i\). Then
\[
d(y_k^i,M_j)\ge d(x_k,M_j)-|x_k-y_k^i|
\ge c_j\alpha_k^{\gamma_j}-c_i\alpha_k^{\beta_i}.
\]
Hence
\[
d(u,M_j)\ge d(y_k^i,M_j)-|u-y_k^i|
\ge c_j\alpha_k^{\gamma_j}-c_i\alpha_k^{\beta_i}-L^2\alpha_k
\ge \frac{2c_j}{3}\alpha_k^{\gamma_j}
>\frac{c_j}{2}\alpha_k^{\gamma_j}
\]
by \eqref{eq:cor-small-2}. Since \(u\in M_i\), it follows that
\[
u\in
M_i\setminus \bigcup_{j:M_j\subset\partial M_i}B(M_j,c_j\alpha_k^{\gamma_j}/2)
\subset \mathcal N_0(i,\alpha_k)\cap M_i,
\]
which proves \eqref{eq:cor-ball-yk}.

By construction, \(P_{M_i}\) is single-valued, \(L\)-Lipschitz, and \(C^2\) on \(U_i\).
Moreover, \eqref{eq:cor-ball-xk} yields
\[
\bigcup_{k=0}^{K-1}B(x_k,L\alpha_k)\subset U_i.
\]
Also, \eqref{eq:cor-ball-yk} and the estimates obtained in Step~2 show that, for every
\(k\in\llbracket 0,K-1\rrbracket\),
\begin{enumerate}
    \item $|\nabla_M f(x)-\nabla_M f(x')|
    \le \frac{A}{\alpha_k^{\omega_i}} |x-x'|$, for all $x,x'\in B(y_k,\alpha_kL^2)\cap M_i$,
    \item $|P_{T_M(y_k)}(v_k)-\nabla_M f(y_k)|
    \le  \frac{A}{\alpha_k^{\omega_i}}|x_k-y_k|$, for all $v_k\in \partial f(x_k)$,

    \item $|DP_M(x)-DP_M(x')|
    \le  \frac{A}{\alpha_k^{\omega_i}}|x-x'|$ for all $x,x'\in B(x_k,\alpha_kL)\cup\{y_k\}$.
\end{enumerate}
Thus the Lipschitz-type requirements of Lemma~\ref{lemma:drop of fy} hold with
\[
L_{f,k}=L_{V,k}=L_{P,k}=\frac{A}{\alpha_k^{\omega_i}}.
\]
Since \(A\ge L\) and \(\alpha_k\le 1\), we also have
\(
L_{f,k},\,L_{V,k},\,L_{P,k}\ge L.\)

Finally, using \eqref{eq:cor-dk} and \eqref{eq:cor-small-3}, we obtain
\[
|f(y_k^i)|
\le |f(x_k)|+L|x_k-y_k^i|
\le \epsilon+Lc_i\alpha_k^{\beta_i}
\le 2\epsilon.
\]
Hence \(y_k^i\in M_i\cap[|f|\le 2\epsilon]\) for all \(k\in\llbracket 0,K\rrbracket\). Therefore,
by Step~1, the chosen desingularizing function \(\psi\) is valid at every \(y_k^i\), and all
assumptions of Lemma~\ref{lemma:drop of fy} are satisfied.

\medskip
\noindent\underline{Step 6: Application of Lemma~\ref{lemma:drop of fy}.}
Using \eqref{eq:cor-dk}, we have
\begin{align*}
\frac12\alpha_kL_{V,k}^2(d_k^i)^2
+L^2\alpha_kL_{P,k}d_k^i
+\frac{L^4\alpha_k^2(L_{f,k}+L_{P,k})}{2}
&\le
\frac{A^2c_i^2}{2}\alpha_k^{1+2\beta_i-2\omega_i}
+L^2Ac_i\,\alpha_k^{1+\beta_i-\omega_i}
+L^4A\,\alpha_k^{2-\omega_i}.
\end{align*}
By \eqref{eq:cor-beta},
\[
2\beta_i-2\omega_i\ge \beta_i-\omega_i\ge \hat\beta,
\qquad
1-\omega_i\ge \hat\beta.
\]
Hence, there exists \(C>0\) such that
\begin{equation}\label{eq:cor-g}
\frac12\alpha_kL_{V,k}^2(d_k^i)^2
+L^2\alpha_kL_{P,k}d_k^i
+\frac{L^4\alpha_k^2(L_{f,k}+L_{P,k})}{2}
\le C\,\alpha_k^{1+\hat\beta}.
\end{equation}
Therefore, any sequence \(g_0\ge g_1\ge \cdots \ge g_K\ge 0\) satisfying
\[
g_k-g_{k+1}\ge C\,\alpha_k^{1+\hat\beta}
\qquad\forall k\in\llbracket 0,K-1\rrbracket
\]
also satisfies the \(g\)-assumption in Lemma~\ref{lemma:drop of fy}.

Likewise,
\begin{align*}
L^3L_{f,k}\alpha_k^2
+L\alpha_kL_{V,k}d_k^i
+\frac{L\alpha_k}{\psi'(g_k)}
&\le
L^3A\,\alpha_k^{2-\omega_i}
+LAc_i\,\alpha_k^{1+\beta_i-\omega_i}
+L\mu\,\alpha_k g_k^\theta\\
&\le C\bigl(\alpha_k^{1+\hat\beta}+\alpha_k g_k^\theta\bigr),
\end{align*}
after enlarging \(C\) if necessary. 
Applying Lemma~\ref{lemma:drop of fy}, we obtain
\begin{align*}
\sum_{k=0}^{K-1}|y_{k+1}^i-y_k^i|
&\le
2L\bigl(\psi(z_0^i)-\psi(z_K^i)\bigr)
+
C\sum_{k=0}^{K-1}\bigl(\alpha_k^{1+\hat\beta}+\alpha_k g_k^\theta\bigr)
+
L^2\max_{k\in\llbracket 0,K-1\rrbracket}\alpha_k,
\end{align*}
where
\[
z_k^i:=f(y_k^i)+g_k,
\qquad
z_0^i\ge z_1^i\ge \cdots \ge z_K^i.
\]
Since
\[
\psi(z_0^i)-\psi(z_K^i)
=
\frac{
\operatorname{sgn}(z_0^i)|z_0^i|^{1-\theta}
-
\operatorname{sgn}(z_K^i)|z_K^i|^{1-\theta}
}{(1-\theta)\mu},
\]
we may enlarge \(C\) once more so that
\[
\frac{2L}{(1-\theta)\mu}\le C
\qquad\text{and}\qquad
L^2\le C.
\]
Then
\begin{align*}
\sum_{k=0}^{K-1}|y_{k+1}^i-y_k^i|
\le
C\Biggl(
&
\operatorname{sgn}(z_0^i)|z_0^i|^{1-\theta}
-
\operatorname{sgn}(z_K^i)|z_K^i|^{1-\theta}\\
&\quad
+\sum_{k=0}^{K-1}\bigl(\alpha_k^{1+\hat\beta}+\alpha_k g_k^\theta\bigr)
+
\max_{k\in\llbracket 0,K-1\rrbracket}\alpha_k
\Biggr).
\end{align*}
Rearranging yields
\[
\operatorname{sgn}(z_0^i)|z_0^i|^{1-\theta}
-
\operatorname{sgn}(z_K^i)|z_K^i|^{1-\theta}
\ge
\frac1C\sum_{k=0}^{K-1}|y_{k+1}^i-y_k^i|
-
\sum_{k=0}^{K-1}\bigl(\alpha_k^{1+\hat\beta}+\alpha_k g_k^\theta\bigr)
-
\max_{k\in\llbracket 0,K-1\rrbracket}\alpha_k.
\]
This is exactly the desired conclusion.
\end{proof}
\subsection{Relative length of subgradient sequences}\label{sec:RM_def}

To estimate the diameter of a subgradient sequence, we need to understand how the sequence evolves relative to a given stratification, especially when it moves near one stratum and then transitions to another. Corollary~\ref{cor:y_k^i} provides exactly the local stratified control needed for this purpose.

Accordingly, throughout this subsection we fix a locally Lipschitz definable function $f:\mathbb R^n\to \mathbb R$ and a compact definable set \(X\subset \mathbb R^n\). Corollary~\ref{cor:y_k^i} then yields constants
\[
\epsilon>0,\qquad C>0,\qquad \hat\beta>0,\qquad \hat\alpha\in (0,1),\qquad \theta\in (0,1),
\]
together with a stratification
\[
\mathcal M:=\{M_1,\ldots,M_{\overline T}\}
\]
of \(X\). We relabel the strata so that \(M_1,\ldots,M_T\) are the non-open strata (i.e., those of dimension strictly smaller than \(n\)), while \(M_{T+1},\ldots,M_{\overline T}\) are the open strata. Thus, \(\overline T\) denotes the total number of strata given by Corollary~\ref{cor:y_k^i}, whereas \(T\) denotes the number of non-open strata after this relabeling. We also fix a Lipschitz constant \(L>1\) of \(f\) on \(X\).

We first collect the notation and standing relations supplied by Corollary~\ref{cor:y_k^i}. We then introduce the key indices that record when the sequence approaches or leaves neighborhoods of non-open strata, and finally define a notion of relative length adapted to the stratification.

We begin by collecting the notation and relations that will be used throughout the rest of this section.

\begin{itemize}
    \item \textbf{Strata.}  
    The sets
    \[
    M_1,\ldots,M_T
    \quad\text{and}\quad
    M_{T+1},\ldots,M_{\overline T}
    \]
    denote respectively the non-open and open strata of the fixed stratification of \(X\).

    \item \textbf{Neighborhood constants.}  
    For each \(i\in \llbracket 1,\overline T\rrbracket\), Corollary~\ref{cor:y_k^i} provides constants
    \[
    c_i>0,
    \qquad
    \hat\beta<\beta_i<\gamma_i(1-\theta)<1.
    \]
    In addition, these constants satisfy the compatibility relation
    \[
    M_j\subset \partial M_i
    \quad\Longrightarrow\quad
    \beta_i>\gamma_j.
    \]
    Also,
    \begin{align}\label{eq: neighborhoods no intersection}
    \bigcup_{\alpha\in(0,1]}\mathcal{N}(i,\alpha)\cap \bigcup_{\alpha\in(0,1]}\mathcal{N}(j,\alpha)=\emptyset\quad \forall i,j\in\llbracket 1,T\rrbracket \text{ s.t. }\overline{M_i}\cap M_j=M_i\cap\overline{M_j}=\emptyset,\end{align}
    where for \(i\in \llbracket 1,\overline T\rrbracket\) and \(\alpha\in (0,\hat\alpha]\), we define
    \begin{equation}\label{eq:def_N_summary}
        \mathcal N(i,\alpha):=(X\cap [|f|\le \epsilon])\cap
        \Bigl(B(M_i,c_i\alpha^{\beta_i})\setminus \bigcup_{j:\,M_j\subset \partial M_i} B(M_j,c_j\alpha^{\gamma_j})\Bigr).
    \end{equation}

    \item \textbf{Derived exponents and uniform constants.}  
    For \(i\in \llbracket 1,T\rrbracket\), we define
    \[
    p_i:=\gamma_i(1-\theta),
    \qquad
    \underline\beta_i:=\min\{\beta_j:M_i\subset \partial M_j\}.
    \]
    We also set
    \[
    \underline\gamma:=\min_{i\in \llbracket 1,T\rrbracket}\gamma_i,
    \qquad
    \underline\beta:=\min_{i\in \llbracket 1,T\rrbracket}\beta_i,
    \qquad
    \underline p:=\min_{i\in \llbracket 1,T\rrbracket}p_i,
    \qquad
    \bar c:=\max_{i\in \llbracket 1,\overline T\rrbracket}c_i.
    \]
    From the above relations, we note in particular that
    \begin{equation}\label{eq:relation_beta_p}
    \beta_i<p_i<\gamma_i<1,
    \qquad
    M_j\subset \partial M_i \Longrightarrow \gamma_j<p_i,
    \qquad
    \underline\beta_i>p_i.      
    \end{equation}
    \item \textbf{The power function.} $\psi(t):=\mathrm{sgn}(t)|t|^{1-\theta}$.

\end{itemize}

All the definitions and statements below are uniform for subgradient sequences
\[
(x_k)_{k\in \mathbb N}
\quad\text{such that}\quad
x_{\llbracket 0,K\rrbracket}\subset X\cap [|f|\le \epsilon]
\]
for some \(K\in \mathbb N\), and whose step sizes satisfy
\[
0<\alpha_{K-1}\le \cdots \le \alpha_0\le \bar\alpha
\]
with $\bar\alpha \le \hat\alpha$ given by Corollary \ref{cor:y_k^i} and we assume that
\begin{equation}\label{eq:standing_alpha_small}
L\alpha\le c_i\alpha^{\gamma_i}/2\le c_i\alpha^{\beta_i}/6,
\qquad
\forall i\in \llbracket 1,\overline T\rrbracket,\ \forall \alpha\in (0,\bar{\alpha}].
\end{equation}
The above bounds on  \(\bar\alpha\) are sufficient for all constructions below in Section \ref{sec:RM_def}. Whenever a later result requires a further reduction of the step-size upper bound, we will state it explicitly. For a fixed subgradient sequence \((x_k)_{k\in\mathbb N}\), we shall also use the following auxiliary notation.

\begin{itemize}
        \item \textbf{Projected iterates and distances.}  
    Whenever \(x_k\in \mathcal N(i,\alpha_k)\), we write
    \[
    y_k^i:=P_{M_i}(x_k),
    \qquad
    d_k^i:=d(x_k,M_i).
    \]

    \item \textbf{Auxiliary scalar sequence and modified function values.}  
    We set
    \begin{equation}\label{eq:expr_g_E_z}
    g_k:=C\sum_{j=k}^{K-1}\alpha_j^{1+\hat\beta},\qquad E_k:=\alpha_k^{1+\hat\beta}+\alpha_k g_k^\theta,
    \qquad
    z_k^i:=f(y_k^i)+g_k,
    \qquad
    z_k:=f(x_k)+g_k.        
    \end{equation}
\end{itemize}

We now introduce the indices that mark when the sequence comes close to non-open strata and when it subsequently leaves the corresponding neighborhoods. These indices will later be used to decompose the trajectory into pieces on which the relative length is defined. Denote by $[x_{k},x_{k+1}]$ the line segment between $x_{k}$ and $x_{k+1}$. Let 
\begin{equation}\label{eq:IC}
    I_C:= \left\{k\in \llbracket 0,K-1\rrbracket: [x_{k},x_{k+1}]\bigcap\left(\bigcup_{i = 1}^T B(M_i,c_i\alpha_k^{\gamma_i})\right) \neq \emptyset\right\}
\end{equation} be the indices where the iterates are about to leave some open stratum. The following facts hold for the iterates $(x_k)_{k\in \mathbb N}$ depending on whether the indices belong to $I_C$ or not.
\begin{fact}\label{fact:IC_close}
For every \(k\in I_C\), the set
\[
\mathcal A_k
:=
\left\{
i\in \llbracket 1,T\rrbracket:
d(x_k,M_i)\le 2c_i\alpha_k^{\gamma_i}
\right\}
\]
is nonempty. Moreover, among the strata indexed by \(\mathcal A_k\), there is a unique one
of minimal dimension. Denoting its index by \(i_0\), we have $x_k\in \mathcal N(i_0,\alpha_k)$.
\end{fact}

\begin{proof}
Fix \(k\in I_C\). By the definition of \(I_C\) in \eqref{eq:IC}, there exist \(i\in\llbracket 1,T\rrbracket\) and
\(z\in [x_k,x_{k+1}]\) such that $d(z,M_i)\le c_i\alpha_k^{\gamma_i}$. Since \((x_k)_{k\in \mathbb N}\) is a subgradient sequence in $X$ where \(f\) is $L$-Lipschitz,
we have \(|x_{k+1}-x_k|\le L\alpha_k\). Hence
\[
d(x_k,M_i)
\le |x_k-z|+d(z,M_i)
\le |x_{k+1}-x_k|+c_i\alpha_k^{\gamma_i}
\le L\alpha_k+c_i\alpha_k^{\gamma_i}
\le 2c_i\alpha_k^{\gamma_i},
\]
where the last inequality follows from \eqref{eq:standing_alpha_small}. Thus
\(i\in\mathcal A_k\), and \(\mathcal A_k\neq\emptyset\).

Let \(i\in\mathcal A_k\) be such that
\[
\dim M_i=\min\{\dim M_h:h\in\mathcal A_k\}.
\]
We claim that \(x_k\in \mathcal N(i,\alpha_k)\). Indeed, since \(i\in\mathcal A_k\), we have $d(x_k,M_i)\le 2c_i\alpha_k^{\gamma_i}$. Moreover, if \(M_h\subset\partial M_i\), then \(\dim M_h<\dim M_i\). By the minimality
of \(\dim M_i\), such an index \(h\) cannot belong to \(\mathcal A_k\). Hence
\[
d(x_k,M_h)>2c_h\alpha_k^{\gamma_h}
\qquad
\text{for every }h\text{ with }M_h\subset\partial M_i.
\]
Therefore
\[
x_k\in
(X\cap[|f|\le \epsilon])
\cap B(M_i,2c_i\alpha_k^{\gamma_i})
\setminus
\bigcup_{h:M_h\subset\partial M_i}
B(M_h,2c_h\alpha_k^{\gamma_h}).
\]
By \eqref{eq:standing_alpha_small}, the right-hand side is contained in
\(\mathcal N(i,\alpha_k)\). Thus \(x_k\in \mathcal N(i,\alpha_k)\).

It remains to show such stratum with the lowest dimension is unique. Suppose, to the contrary, that two distinct indices \(i,j\in\mathcal A_k\) both have minimal
dimension among the indices in \(\mathcal A_k\).  Then \(\dim M_i=\dim M_j\).
By the frontier condition in Definition \ref{def:stratification}, this implies
\[
\overline{M_i}\cap M_j=M_i\cap \overline{M_j}=\emptyset .
\]
Moreover, we have
\[
x_k\in \mathcal N(i,\alpha_k)
\qquad\text{and}\qquad
x_k\in \mathcal N(j,\alpha_k).
\]
This contradicts the disjointness property in \eqref{eq: neighborhoods no intersection}.
\end{proof}
\begin{fact}\label{fact:IC_away}
    Let $k_1,k_2\in \mathbb N$ such that $0 \le k_1 < k_2 \le K$ and $[k_1,k_2)\cap I_C = \emptyset$. Then there exists $i\in \llbracket T+1,\overline{T}\rrbracket$ such that $x_k\in \mathcal N(i,\alpha_k)$ for all $k\in \llbracket k_1,k_2\rrbracket$.
\end{fact}
\begin{proof}
    As $k_1 \not\in I_C$, it holds that $x_{k_1}\in X\cap [ |f| \le \epsilon] \setminus \cup_{i = 1}^T B(M_i,c_i\alpha_{k_1}^{\gamma_i})$. By the definition of $\mathcal N(\cdot,\cdot)$ \eqref{eq:def_N_summary}, there exists $i\in \llbracket T+1,\overline{T}\rrbracket$ such that $x_{k_1}\in \mathcal N(i,\alpha_{k_1})$. We next show that
    \begin{equation}\label{eq:in_open}
        x_{k}\in \mathcal N(i,\alpha_k)\text{ and }k\not\in I_C\implies x_{k+1}\in \mathcal N(i,\alpha_{k+1}).
    \end{equation}
    The fact then follows immediately. To prove \eqref{eq:in_open}, note that by $k\not\in I_C$, we have $x_{k+1}\in X\cap [ |f| \le \epsilon] \setminus \cup_{i = 1}^T B(M_i,c_i\alpha_{k}^{\gamma_i})\subset X\cap [ |f| \le \epsilon] \setminus \cup_{i = 1}^T B(M_i,c_i\alpha_{k+1}^{\gamma_i})$ since $\alpha_{k+1} \le \alpha_k \le \hat{\alpha}\le 1$. Therefore, there exists $i'\in \llbracket T+1,\overline{T}\rrbracket$ such that $x_{k+1}\in M_{i'}$. Since
\(
x_{k+1}\in X\cap [ |f| \le \epsilon]\setminus \bigcup_{j=1}^{T} B(M_j,c_j\alpha_{k+1}^{\gamma_j}),
\)
it follows from the definition of \(\mathcal N(i',\alpha_{k+1})\) that
\(
x_{k+1}\in \mathcal N(i',\alpha_{k+1}).
\) Assume the contrary that $i' \ne i$, then $x_k$ and $x_{k+1}$ belong to $M_i$ and $M_{i'}$ respectively, which are two distinct open strata. Let $d:= x_{k+1} -x_k$, $t^* := \sup\{t\in [0,1]: x_k + \tau d\in M_i,\forall \tau \in [0,t)\}$, and $\bar{x}:= x_k + t^*d$. Then $\bar{x} \in \partial M_i\subset X$ by the definition of $t^*$ and compactness of $X$. Thus, $\bar{x}\in M_j$ for some $j\in \llbracket 1,T\rrbracket$, which is a contradiction with the fact that $k\not\in I_C$. This proves \eqref{eq:in_open}.
\end{proof}
Fact~\ref{fact:IC_close} ensures that each index in \(I_C\) can be associated
with at least one non-open stratum sufficiently close to \(x_k\), with a unique one of the lowest dimension.  Hence we may define $G:I_C\to \llbracket 1,T\rrbracket$
by
\begin{equation}\label{eq:def_G}
G(k):=
\arg\min\{\mathrm{dim}(M_i): d(x_k,M_i)
\le 2c_i\alpha_k^{\gamma_i},~i\in \llbracket 1,T\rrbracket\}.
\end{equation}
By Fact \ref{fact:IC_close}, this choice guarantees that $x_k\in \mathcal N(G(k),\alpha_k)$.

Assuming that \(I_C\ne \emptyset\), we now define a recursive family of indices by the selection $G$ that records how the sequence alternates between neighborhoods of non-open strata and the open strata connecting them.
\begin{equation}\label{eq:def_l}
    \begin{aligned}
            &l_0:= \min\{k: k\in I_C\},
            \\&s(l_m):=\max\{k\in \llbracket l_m,K\rrbracket: x_j\in \mathcal{N}(G(l_m),\alpha_j),~\forall j\in \llbracket l_m,k\rrbracket\},
            \\&q(l_m):=\max\{k\in \llbracket l_m,s(l_m)\rrbracket: d(x_k,M_{G(l_m)}) \le 2c_{G(l_m)}\alpha_k^{\gamma_{G(l_m)}}\},
            \\&l_{m+1}:=\inf\{k\in (q(l_m),\infty):k\in I_C\}       
    \end{aligned}
\end{equation}
for $m=0,\ldots \overline{m}-1$ where $\overline{m}:= \max\{m\in \mathbb{N}:l_m<+\infty\}$.  These indices isolate one segment of the sequence near a non-open stratum at a time. More precisely, \(l_m\) is the first index of such a segment, \(s(l_m)\) is its last index inside the neighborhood \(\mathcal N(G(l_m),\alpha_k)\), \(q(l_m)\) is the last index between \(l_m\) and \(s(l_m)\) that is still very close to the stratum \(M_{G(l_m)}\), and \(l_{m+1}\) is the next index at which the sequence again comes close to a non-open stratum. An illustration of these indices is given in Figure~\ref{fig:rm-illustration}.

Let
\[
\mathcal L:=\{l_0,\ldots,l_{\overline m}\},
\]
and regard \(s\) and \(q\) as mappings defined on \(\mathcal L\). The recursive construction immediately yields two facts that will be used repeatedly later.
 \begin{fact}\label{fact:q_open}
For each $m$ such that $l_{m+1}>q(l_m)+1$, there exists $i\in \llbracket T+1,\overline{T}\rrbracket$ such that $x_k\in \mathcal N(i,\alpha_k)$ for all $k\in \llbracket q(l_m)+1,l_{m+1}\rrbracket$.    
\end{fact}
\begin{proof}
    It follows directly from Fact \ref{fact:IC_away}.
\end{proof}
\begin{fact}\label{fact:no_repeat}
 Let $l\in \mathcal L$, then it holds that $G(k)\neq G(l)$ for all $k\in (q(l),s(l)]\cap I_C$. Thus, if $l,\ell \in \mathcal L$ satisfy $\ell>l$ and $G(l) = G(\ell)$, then $\ell>s(l)$.
 \end{fact}
\begin{proof}
The first part is immediate from the definition of $G(\cdot)$, $q(l)$, and $s(l)$ (see \eqref{eq:def_G} and \eqref{eq:def_l}). The second part follows from $\ell > q(l)$ by the definition of $\mathcal L$.
\end{proof}

We are now ready to define the length of the subgradient sequence relative to a fixed stratification. Roughly speaking, this quantity agrees with the usual length when the sequence stays inside a single open stratum, but near a non-open stratum it measures the motion through projected iterates until the trajectory leaves the very-close regime at index \(q(l_m)\), after which it follows the original iterates again.
\begin{definition}[Relative length]\label{def:RL}
Let \(0\le k_1<k_2\le K\). We define the relative length of
\(x_{\llbracket k_1,k_2\rrbracket}\), denoted by
\(\mathrm{RL}(x_{\llbracket k_1,k_2\rrbracket})\), in the following cases.

\smallskip
\noindent\emph{Case 1.}
If $k_1<k_2 \le l_0$, then
\[\mathrm{RL}(x_{\llbracket k_1,k_2\rrbracket})
:=
\sum_{k=k_1}^{k_2-1}|x_{k+1}-x_k|.\]

\smallskip
\noindent\emph{Case 2.}
Assume that there exists \(m\in \llbracket 0,\overline m\rrbracket\) such that
\[
l_m\le k_1<k_2\le \inf\{\ell\in \mathcal L: \ell >l_m\},
\]
and write \(i:=G(l_m)\) and \(q:=q(l_m)\).
\begin{enumerate}
    \item If \(q\le k_1<k_2\), set
\[
\mathrm{RL}(x_{\llbracket k_1,k_2\rrbracket})
:=
\sum_{k=k_1}^{k_2-1}|x_{k+1}-x_k|.
\]
\item If \(l_m<k_1<q\), set
\[
\mathrm{RL}(x_{\llbracket k_1,k_2\rrbracket})
:=
\begin{cases}
\displaystyle
\sum_{k=k_1}^{k_2-1}|y_{k+1}^{i}-y_k^{i}|,
& \text{if } k_2<q,\\[1.2em]
\displaystyle
\sum_{k=k_1}^{q-1}|y_{k+1}^{i}-y_k^{i}|
+|x_q-y_q^{i}|
+\sum_{k=q}^{k_2-1}|x_{k+1}-x_k|,
& \text{if } k_1<q\le k_2.
\end{cases}
\]
\item If \(k_1=l_m<q\), set
\[
\mathrm{RL}(x_{\llbracket l_m,k_2\rrbracket})
:=
|x_{l_m}-y_{l_m}^{i}|
+|y_{l_m+1}^{i}-y_{l_m}^{i}|
+\mathrm{RL}(x_{\llbracket l_m+1,k_2\rrbracket}),
\]
with the convention that
\[
\mathrm{RL}(x_{\llbracket k_2,k_2\rrbracket})=0.
\]
\end{enumerate}

\smallskip
\noindent\emph{Case 3.}
Assume that
\(
(k_1,k_2)\cap \mathcal L\neq \emptyset.
\)
Let
\[
m_1:=\min\{m\in \llbracket 0,\overline m\rrbracket:l_m>k_1\},
\qquad
m_2:=\max\{m\in \llbracket 0,\overline m\rrbracket:l_m<k_2\}.
\]
Then
\begin{equation*}
\mathrm{RL}(x_{\llbracket k_1,k_2\rrbracket})
:=
\mathrm{RL}(x_{\llbracket k_1,l_{m_1}\rrbracket})
+\sum_{m=m_1}^{m_2-1}\mathrm{RL}(x_{\llbracket l_m,l_{m+1}\rrbracket})
+\mathrm{RL}(x_{\llbracket l_{m_2},k_2\rrbracket}).
\end{equation*}
\end{definition}
Figure~\ref{fig:rm-illustration} illustrates the indices \(l_m\), \(q(l_m)\), \(s(l_m)\), as well as the corresponding relative length \(\mathrm{RL}\). The definition above is designed so that \(\mathrm{RL}\) is available on every interval and behaves like an effective length adapted to the stratification. We record these basic properties in the next remark.

\usetikzlibrary{calc,decorations.pathreplacing}

\definecolor{MyBlue}{RGB}{65,105,225}
\definecolor{MyMagenta}{RGB}{204,51,153}

\definecolor{AreaOne}{RGB}{255,200,200}
\definecolor{AreaTwo}{RGB}{200,255,200}
\definecolor{AreaThree}{RGB}{200,200,255}

\tikzset{
  levelset/.style={draw=cyan, dashed, line cap=round},
  sublevel/.style={draw=orange!80!black, dash pattern=on 6pt off 4pt},
  valley/.style={draw=MyBlue, very thick},
  iterline/.style={draw=MyMagenta, line width=1.2pt},
  iterlineproj/.style={draw=black, line width=1.2pt},
  iterdot/.style={fill=MyMagenta, draw=MyMagenta},
  projdot/.style={fill=black, draw=black},
  switchdot/.style={draw=MyMagenta, fill=white, very thick},
  lab/.style={font=\scriptsize}
}

\newcommand{\DrawCommonStratification}{

  \draw[levelset] (-1.6,2.4) .. controls (-1.2,1.9) and (-0.8,1.2) .. (-0.03,0.48);
  \draw[sublevel] (-1.8,2.1) .. controls (-1.4,1.6) and (-1.0,0.9) .. (-0.28,0.389);
  \draw[sublevel] (-2.15,1.8) .. controls (-1.75,1.3) and (-1.35,0.6) .. (-0.44,0.00);
  \draw[levelset] (-2.4,1.7) .. controls (-1.85,1.0) and (-1.7,0.6) .. (-0.45,-0.24);

  \draw[levelset] (0,0) circle (1.32);
  \draw[sublevel] (0,0) circle (0.9);

  \draw[levelset] (.25,.27) .. controls (1.2,0.3) and (2.4,-.3) .. (2.9,.25);
  \draw[levelset] (.45,-.2) .. controls (1.2,-0.15) and (2.4,-.9) .. (2.9,-.3);
  \draw[sublevel] (.41,.18) .. controls (1.2,0.15) and (2.4,-.42) .. (2.9,.08);
  \draw[sublevel] (.41,-.05) .. controls (0.8,0.1) and (2.2,-0.65) .. (2.9,-.20);

  \begin{scope}
    \fill[AreaOne, opacity=0.4]
      (-0.03,0.48) .. controls (-0.8,1.2) and (-1.2,1.9) .. (-1.6,2.4)
      --
      (-2.4,1.7) .. controls (-1.85,0.9) and (-1.7,0.6) .. (-0.40,-0.26)
      -- cycle;

    \fill[AreaTwo, opacity=0.4]
      (.25,.27) .. controls (1.2,0.3) and (2.4,-.3) .. (2.9,.25)
      --
      (2.9,-.3) .. controls (2.4,-.9) and (1.2,-0.15) .. (.45,-.2)
      -- cycle;

     \fill[white] (0,0) circle (0.45);
    \fill[AreaThree, opacity=0.4] (0,0) circle (1.32);
  \end{scope}

  \draw[valley]
    (-2,2) .. controls (-1.1,0.6) and (-0.5,0.2) .. (0,0)
           .. controls (1.2,0.3) and (2.2,-0.7) .. (3,0);
  \filldraw[MyBlue] (0,0) circle (1pt);
}

\newcommand{\DefineMainIterates}{
  \coordinate (p0)  at (-1.00, 2.50);
  \coordinate (p1)  at (-1.20, 2.10);
  \coordinate (p2)  at (-1.40, 1.80);
  \coordinate (p3)  at (-1.50, 1.50);
  \coordinate (p7)  at (-1.15, 1.15);
  \coordinate (p8)  at (-1.00, 1.30);
  \coordinate (p9)  at (-0.85, 1.15);
  \coordinate (p10) at (-0.60, 1.00);
  \coordinate (p11) at (-0.35, 1.15);
  \coordinate (p12) at (-0.10, 1.10);
  \coordinate (p13) at ( 0.10, 0.70);
  \coordinate (p14) at (-0.10, 0.25);
  \coordinate (p18) at ( 0.30,-0.30);
  \coordinate (p19) at ( 0.65,-0.35);
  \coordinate (p20) at ( 0.95,-0.40);
  \coordinate (p21) at ( 1.15,-0.45);
  \coordinate (p22) at ( 1.40,-0.60);
  \coordinate (p23) at ( 1.60,-0.80);
}

\newcommand{\DrawSwitchingLabels}{
  \node[lab,MyMagenta, anchor=west] at ($(p0)+(0.02,-0.10)$) {$0$};
  \node[lab,MyMagenta, anchor=west] at ($(p23)+(0.02,-0.10)$) {$K$};

  \fill[switchdot] (p3) circle (1.5pt);
  \node[lab,MyMagenta, anchor=west] at ($(p3)+(0.02,-0.10)$) {$l_0$};

  \fill[switchdot] (p13) circle (1.5pt);
  \node[lab,MyMagenta, anchor=west] at ($(p13)+(-0.30,0.00)$) {$l_1$};

  \fill[switchdot] (p10) circle (1.5pt);
  \node[lab,MyMagenta, anchor=west] at ($(p10)+(-0.27,-0.16)$) {$s(l_0)$};

  \fill[switchdot] (p21) circle (1.5pt);
  \node[lab,MyMagenta, anchor=west] at ($(p21)+(-0.19,-0.22)$) {$s(l_1)$};

  \fill[switchdot] (p7) circle (1.5pt);
  \node[lab,MyMagenta, anchor=west] at ($(p7)+(-0.12,-0.16)$) {$q(l_0)$};

  \fill[switchdot] (p19) circle (1.5pt);
  \node[lab,MyMagenta, anchor=west] at ($(p19)+(-0.28,-0.18)$) {$q(l_1)$};
}

\newcommand{\DrawNeighborhoodLegendHorizontal}{
\begin{tikzpicture}[x=1cm,y=1cm,baseline]
  \def\LegendStep{2.45}
  \def\BoxW{0.35}
  \def\BoxH{0.22}
  \def\TextGap{0.13}
  \def\LineW{0.50}

  \fill[AreaOne] (0*\LegendStep,0) rectangle ++(\BoxW,\BoxH);
  \node[font=\small, anchor=west] at ({0*\LegendStep+\BoxW+\TextGap},0.11)
    {$\mathcal N(1,\alpha)$};

  \fill[AreaThree] (1*\LegendStep,0) rectangle ++(\BoxW,\BoxH);
  \node[font=\small, anchor=west] at ({1*\LegendStep+\BoxW+\TextGap},0.11)
    {$\mathcal N(2,\alpha)$};

  \fill[AreaTwo] (2*\LegendStep,0) rectangle ++(\BoxW,\BoxH);
  \node[font=\small, anchor=west] at ({2*\LegendStep+\BoxW+\TextGap},0.11)
    {$\mathcal N(3,\alpha)$};

  \draw[levelset] ({3*\LegendStep},0.11) -- ++(\LineW,0);
  \node[font=\small, anchor=west] at ({3*\LegendStep+\LineW+\TextGap},0.11)
    {$c_i\alpha^{\beta_i}$};

  \draw[sublevel] ({4*\LegendStep},0.11) -- ++(\LineW,0);
  \node[font=\small, anchor=west] at ({4*\LegendStep+\LineW+\TextGap},0.11)
    {$2c_i\alpha^{\gamma_i}$};
\end{tikzpicture}
}

\begin{figure}[!ht]
\centering
\DrawNeighborhoodLegendHorizontal \par\vspace{0.8em}
\begin{tikzpicture}[scale=2]
  \DrawCommonStratification
  \DefineMainIterates

  \coordinate (p4)  at (-1.65,1.30);
  \coordinate (p5)  at (-1.35,1.20);
  \coordinate (p15) at (-0.05,-0.05);
  \coordinate (p16) at ( 0.15, 0.15);
  \coordinate (p17) at ( 0.25,-0.05);

  \draw[iterline] (p0)--(p1)--(p2)--(p3);
  \draw[iterline] (p3)--(p4)--(p5)--(p7);
  \draw[iterline] (p7)--(p8)--(p9)--(p10)--(p11)--(p12)--(p13)--(p14);
  \draw[iterline] (p14)--(p15)--(p16)--(p17)--(p18);
  \draw[iterline] (p18)--(p19)--(p20)--(p21)--(p22)--(p23);

  \foreach \pt in {p0,p1,p2,p3,p4,p5,p7,p8,p9,p10,p11,p12,p13,p14,p15,p16,p17,p18,p19,p20,p21,p22,p23}
    \fill[iterdot] (\pt) circle (1pt);

  \DrawSwitchingLabels
  \node[align=left, font=\small, anchor=west] at (-2.4,2.2) {\textcolor{blue}{$M_1$}};
\node[align=left, font=\small, anchor=west] at (-.3,-0.2) {\textcolor{blue}{$M_2$}};
\node[align=left, font=\small, anchor=west] at (3.0,-0.15) {\textcolor{blue}{$M_3$}};
\end{tikzpicture}

\par\vspace{0.8em}

\begin{tikzpicture}[scale=2]
  \DrawCommonStratification
  \DefineMainIterates

  \coordinate (p3j)  at (-1.60,1.43);
  \coordinate (p4j)  at (-1.55,1.33);
  \coordinate (p5j)  at (-1.40,1.14);
  \coordinate (p6j)  at (-1.29,1.02);
  \coordinate (p14j) at ( 0.00,0.00);

  \draw[iterlineproj] (p0)--(p1)--(p2)--(p3);
  \draw[iterlineproj] (p3)--(p3j)--(p4j)--(p5j)--(p6j)--(p7);
  \draw[iterlineproj] (p7)--(p8)--(p9)--(p10)--(p11)--(p12)--(p13);
  \draw[iterlineproj] (p13)--(p14j)--(p19);
  \draw[iterlineproj] (p19)--(p20)--(p21)--(p22)--(p23);

  \foreach \pt in {p0,p1,p2,p3,p7,p8,p9,p10,p11,p12,p13,p19,p20,p21,p22,p23}
    \fill[iterdot] (\pt) circle (1pt);

  \foreach \pt in {p3j,p4j,p5j,p6j,p14j}
    \fill[projdot] (\pt) circle (1pt);

  \DrawSwitchingLabels
    \node[align=left, font=\small, anchor=west,color = blue] at (-2.4,2.2) {\textcolor{blue}{$M_1$}};
\node[align=left, font=\small, anchor=west] at (-.3,-0.2) {\textcolor{blue}{$M_2$}};
\node[align=left, font=\small, anchor=west] at (3.0,-0.15) {\textcolor{blue}{$M_3$}};
\end{tikzpicture}

\caption{Illustration of the indices \(l_m\), \(q(l_m)\), \(s(l_m)\), and the relative length \(\mathrm{RL}\) of a subgradient sequence. The top panel displays the original sequence in magenta, whereas the bottom panel emphasizes the corresponding relative length in black. For ease of visualization, the figure is drawn for the case of a constant step size ($\alpha_k = \alpha$). When the step sizes vary, the neighborhoods \(\mathcal N(i,\alpha_k)\) evolve with the iteration index \(k\), making a  static illustration impossible.}
\label{fig:rm-illustration}
\end{figure}
\begin{remark}\label{rem:RL_basic}
The quantity \(\mathrm{RL}(x_{\llbracket k_1,k_2\rrbracket})\) should be viewed as the length of the segment \(x_{\llbracket k_1,k_2\rrbracket}\) relative to the fixed stratification \(\mathcal M\). In particular, it measures the motion of the sequence partly through projected iterates on suitable strata, rather than solely through the original iterates. 

It is also worth noticing that \(\mathrm{RL}\) is monotone with respect to interval inclusion. Namely, if
    \(
    \llbracket k_1,k_2\rrbracket\subset \llbracket k_1',k_2'\rrbracket,
    \)
    then
    \[
    \mathrm{RL}(x_{\llbracket k_1,k_2\rrbracket})
    \le
    \mathrm{RL}(x_{\llbracket k_1',k_2'\rrbracket}).
    \]
    This follows directly from the definition, since enlarging the interval only adds nonnegative contributions to the relative length.

\end{remark}

In the remainder of this subsection, we relate the relative length to the displacement of the original iterates. We begin with a single block \(\llbracket l_m,l_{m+1}\rrbracket\), where the relative length controls the displacement from the left endpoint \(l_m\) to any later index in the block, up to a projection error before \(q(l_m)\).
\begin{lemma}\label{lemma:RM_within_one}
For any $m\in \llbracket 0,\overline{m}\rrbracket$, it holds that
\begin{enumerate}
\item If $k_2 \in \llbracket l_m,q(l_{m}) - 1\rrbracket$, then $|x_{l_m} - x_{k_2}| \le \mathrm{RL}(x_{\llbracket l_m,k_2\rrbracket})+c_{G(l_m)}\alpha_{k_2}^{\beta_{G(l_m)}}$;
    \item If $k_2\in \llbracket q(l_m),l_{m+1}\rrbracket$, then $|x_{l_m} - x_{k_2}| \le \mathrm{RL}(x_{\llbracket l_m,k_2\rrbracket})$.
\end{enumerate} 
\end{lemma}
\begin{proof}
    Given $k_2 \in \llbracket l_m,q(l_{m}) - 1\rrbracket$, by the definition of $\mathrm{RL}$ above and the triangular inequality, we have
    \begin{align*}
        \mathrm{RL}(x_{\llbracket l_m,k_2\rrbracket}) &= |x_{l_m} - y_{l_m}^{G(l_m)}|+\sum_{k = l_m}^{k_2 - 1} \left|y_{k+1}^{G(l_m)} - y_{k}^{G(l_m)}\right|\\
        &\ge |x_{l_m} - y_{k_2}^{G(l_m)}|\\
        &\ge |x_{l_m} - x_{k_2}| - |x_{k_2} - y_{k_2}^{G(l_m)}|\\
        &\ge |x_{l_m} - x_{k_2}| - c_{G(l_m)}\alpha_{k_2}^{\beta_{G(l_m)}},
    \end{align*}
where the last inequality is due to $k_2 \le q(l_m) \le s(l_m)$ and the definition of $s(l_m)$ in \eqref{eq:def_l}.

If $k_2\in \llbracket q(l_m),l_{m+1}\rrbracket$, then
\begin{align*}
    \mathrm{RL}(x_{\llbracket l_m,k_2\rrbracket}) &=|x_{l_m} - y_{l_m}^{G(l_m)}|+\sum_{k = l_m}^{q(l_m) - 1}\left|y_{k+1}^{G(l_m)} - y_{k}^{G(l_m)}\right| + \left|x_{q(l_m)} - y_{q(l_m)}^{G(l_m)}\right| +  \displaystyle\sum_{k = q(l_m)}^{k_2 - 1} |x_{k+1} - x_k|\\
    &\ge |x_{l_m} - x_{k_2}|
\end{align*}
by the triangular inequality.
\end{proof}

Lemma~\ref{lemma:RM_within_one} treats intervals contained in a single block. The next lemma extends this control to segments that may cross several consecutive blocks, as long as the terminal index \(k_2\) still lies before the exit time \(s(l)\) of the original block.
\begin{lemma}\label{lemma:RM_multi}
    Let $l\in \mathcal L$, $k_2 \in \llbracket q(l), s(l)\rrbracket$, and $m_2:=\max\{m\in \llbracket 0,\overline{m}\rrbracket:l_m<k_2\}$. Then either $M_{G(l_{m_2})}\subset \partial M_{G(l)}$ or $|x_{l} - x_{k_2}| \le \mathrm{RL}(x_{\llbracket l,k_2\rrbracket})+\bar{c} \alpha_{l}^{\underline\beta_{G(l)}}$.
\end{lemma}
\begin{proof}
Let $m_1\in \llbracket 0,\overline{m}\rrbracket$ be such that $l=l_{m_1}$. If $k_2\in \llbracket q(l),l_{m_1+1}\rrbracket$, then
\[
|x_l-x_{k_2}|\le \mathrm{RL}(x_{\llbracket l,k_2\rrbracket})
\]
by Lemma~\ref{lemma:RM_within_one}, and there is nothing to prove.

Assume now that $k_2>l_{m_1+1}$.
Then $m_2\ge m_1+1$ and $k_2\in \llbracket l_{m_2},l_{m_2+1}\rrbracket$. By Case 3 of Definition \ref{def:RL} and Lemma~\ref{lemma:RM_within_one}, we have
\begin{align*}
\mathrm{RL}(x_{\llbracket l,k_2\rrbracket})
&=\sum_{m=m_1}^{m_2-1}\mathrm{RL}(x_{\llbracket l_m,l_{m+1}\rrbracket})
   +\mathrm{RL}(x_{\llbracket l_{m_2},k_2\rrbracket})\\
&\ge \sum_{m=m_1}^{m_2-1}|x_{l_m}-x_{l_{m+1}}|
   +|x_{l_{m_2}}-x_{k_2}|
   -c_{G(l_{m_2})}\alpha_{k_2}^{\beta_{G(l_{m_2})}}\\
&\ge |x_l-x_{k_2}|-c_{G(l_{m_2})}\alpha_{k_2}^{\beta_{G(l_{m_2})}},
\end{align*}
where the last inequality follows from the triangular inequality.

It remains to estimate the last error term. Since $l_{m_2}<k_2\le s(l)$ and $l_{m_2}>q(l)$ by the definition of indices in \eqref{eq:def_l}, we have
\[
l_{m_2}\in (q(l),s(l))\cap I_C.
\]
Hence, by Fact \ref{fact:no_repeat}, one has
\(
G(l_{m_2})\neq G(l)
\).
On the other hand, because $l_{m_2}\le s(l)$, we have $x_{l_{m_2}}\in \mathcal N(G(l),\alpha_{l_{m_2}})$ by the definition of $s(l)$. Also, by Fact \ref{fact:IC_close}, we have $x_{l_{m_2}}\in \mathcal N(G(l_{m_2}),\alpha_{l_{m_2}})$.

Invoking the nonintersecting condition \eqref{eq: neighborhoods no intersection}, we have either $\overline{M_i}\cap M_j \ne \emptyset$ or $M_i\cap \overline{M_j}\ne \emptyset$. By the frontier condition of the stratification, this implies that either
\[
M_{G(l_{m_2})}\subset \partial M_{G(l)}
\qquad\text{or}\qquad
M_{G(l)}\subset \partial M_{G(l_{m_2})}.
\]
Now assume that $M_{G(l)}\subset \partial M_{G(l_{m_2})}$. 
By the definition of $\underline\beta_{G(l)}$, this implies
\(
\beta_{G(l_{m_2})}\ge \underline\beta_{G(l)}
\).
Since $(\alpha_k)_{k\in \mathbb N}$ is decreasing and $k_2\ge l$, we also have $\alpha_{k_2}\le \alpha_l$. Hence
\[
c_{G(l_{m_2})}\alpha_{k_2}^{\beta_{G(l_{m_2})}}
\le \bar c\,\alpha_{k_2}^{\underline\beta_{G(l)}}
\le \bar c\,\alpha_l^{\underline\beta_{G(l)}},
\]
as $c\le \bar{c}$. Substituting this into the previous estimate yields
\[
|x_l-x_{k_2}|
\le \mathrm{RL}(x_{\llbracket l,k_2\rrbracket})+\bar c\,\alpha_l^{\underline\beta_{G(l)}},
\]
which proves the claim.
\end{proof}

We now pass from estimates based at an index \(l\in\mathcal L\) to estimates on arbitrary intervals. The next lemma shows that the diameter of the original iterates over any interval can be bounded directly in terms of the corresponding relative length. The price for this increased generality is a larger error term of order \(\alpha_{k_1}^{\underline\beta}\), compared with the preceding localized bounds. This estimate will nonetheless be essential later, when we convert the relative-length bounds from Lemma~\ref{lemma:RM_arbitrary_interval} into diameter bounds for the original sequence and thereby prove Theorem~\ref{thm:diameter}.
 \begin{lemma}\label{lemma:diameter_by_RM}
For any $0\le k_1<k_2\le K$, one has
\begin{equation}\label{eq:endpoint_by_RM}
    |x_{k_1}-x_{k_2}|
    \le
    \mathrm{RL}(x_{\llbracket k_1,k_2\rrbracket})
    +4\bar c\,\alpha_{k_1}^{\underline{\beta}}.
\end{equation}
Consequently,
\begin{equation}\label{eq:diameter_by_RM}
    \mathrm{diam}(x_{\llbracket k_1,k_2\rrbracket})
    \le
    \mathrm{RL}(x_{\llbracket k_1,k_2\rrbracket})
    +4\bar c\,\alpha_{k_1}^{\underline{\beta}}.
\end{equation}

\end{lemma}

\begin{proof}
In the case where $[k_1,k_2)\cap I_C = \emptyset$, \eqref{eq:endpoint_by_RM} holds trivially by the triangular inequality. 

We next note a local estimate within a single block. Fix $m\in \llbracket 0,\overline m-1\rrbracket$ and let
$i:=G(l_m)$. Then for any $a,b\in \llbracket l_m,l_{m+1}\rrbracket$ with $a<b$, one has
\begin{equation}\label{eq:local_RM_bound}
    |x_a-x_b|
    \le
    \mathrm{RL}(x_{\llbracket a,b\rrbracket})
    +2c_i\alpha_a^{\beta_i}
    \le
    \mathrm{RL}(x_{\llbracket a,b\rrbracket})
    +2\bar c\,\alpha_a^{\underline\beta}.
\end{equation}
Indeed, this follows by the same triangular-inequality argument as in
Lemma~\ref{lemma:RM_within_one}, distinguishing the three cases:
\[
b<q(l_m),\qquad a<q(l_m)\le b,\qquad q(l_m)\le a.
\]
In the first case, both endpoints are compared with their projections onto
$M_{G(l_m)}$, which gives two projection errors; in the second case, only the left
endpoint contributes such an error; in the third case, no projection error appears.

Now let $0\le k_1<k_2\le K$ with $(k_1,k_2)\cap \mathcal L\neq \emptyset$, and define
\[
m_1:=\min\{m\in \llbracket 0,\overline m\rrbracket:l_m>k_1\},
\qquad
m_2:=\max\{m\in \llbracket 0,\overline m\rrbracket:l_m<k_2\}.
\]
By the triangular inequality,
\begin{align*}
|x_{k_1}-x_{k_2}|
\le\;&
|x_{k_1}-x_{l_{m_1}}|
+\sum_{m=m_1}^{m_2-1}|x_{l_m}-x_{l_{m+1}}|
+|x_{l_{m_2}}-x_{k_2}|.
\end{align*}
For the two partial pieces, \eqref{eq:local_RM_bound} yields
\begin{align*}
|x_{k_1}-x_{l_{m_1}}|
&\le \mathrm{RL}(x_{\llbracket k_1,l_{m_1}\rrbracket})
   +2\bar c\,\alpha_{k_1}^{\underline\beta},\\
|x_{l_{m_2}}-x_{k_2}|
&\le \mathrm{RL}(x_{\llbracket l_{m_2},k_2\rrbracket})
   +2\bar c\,\alpha_{l_{m_2}}^{\underline\beta}
 \le \mathrm{RL}(x_{\llbracket l_{m_2},k_2\rrbracket})
   +2\bar c\,\alpha_{k_1}^{\underline\beta},
\end{align*}
where we used the fact that $(\alpha_k)$ is nonincreasing. For each complete block,
Lemma~\ref{lemma:RM_within_one} gives
\[
|x_{l_m}-x_{l_{m+1}}|
\le
\mathrm{RL}(x_{\llbracket l_m,l_{m+1}\rrbracket}),
\qquad m=m_1,\dots,m_2-1.
\]
Combining these estimates and using Case 3 of Definition \ref{def:RL}, we obtain
\[
|x_{k_1}-x_{k_2}|
\le
\mathrm{RL}(x_{\llbracket k_1,k_2\rrbracket})
+4\bar c\,\alpha_{k_1}^{\underline\beta},
\]
which proves \eqref{eq:endpoint_by_RM}.

It remains to prove \eqref{eq:diameter_by_RM}. Let $p,q\in \llbracket k_1,k_2\rrbracket$
with $p<q$. Applying \eqref{eq:endpoint_by_RM} to the interval $\llbracket p,q\rrbracket$
gives
\[
|x_p-x_q|
\le
\mathrm{RL}(x_{\llbracket p,q\rrbracket})
+4\bar c\,\alpha_p^{\underline\beta}.
\]
Since $\mathrm{RL}$ is nondecreasing under interval inclusion by Remark \ref{rem:RL_basic} and
$\alpha_p\le \alpha_{k_1}$, it follows that
\[
|x_p-x_q|
\le
\mathrm{RL}(x_{\llbracket k_1,k_2\rrbracket})
+4\bar c\,\alpha_{k_1}^{\underline\beta}.
\]
Taking the supremum over all $p,q\in \llbracket k_1,k_2\rrbracket$ proves
\eqref{eq:diameter_by_RM}.
\end{proof}

\subsection{Bounding the relative length}\label{sec:bound_rm}
The goal of this subsection is to derive quantitative upper bounds on the relative length \(\mathrm{RL}\) introduced in Section~\ref{sec:RM_def}. The strategy is to compare relative length with the drop of \(\psi(z_k^i)\) using Corollary \ref{cor:y_k^i}, and then to control the error terms produced when the sequence passes from one stratum to another. We will use repeatedly the following simple fact about power functions.

\begin{fact}\label{fact:psi_holder}
Let
\[
\psi(t):=\operatorname{sgn}(t)|t|^{1-\theta},
\qquad \theta\in(0,1).
\]
Then, for all \(t_1,t_2\in \mathbb R\),
\[
|\psi(t_1)-\psi(t_2)|
\le 2\psi(|t_1-t_2|).
\]
\end{fact}

\begin{proof}
Let \(\alpha:=1-\theta\in(0,1)\). We distinguish two cases.

\medskip
\noindent\emph{Case 1: \(t_1t_2<0\).}
Then \(t_1\) and \(t_2\) have opposite signs, so
\[
|\psi(t_1)-\psi(t_2)|
=
|t_1|^\alpha+|t_2|^\alpha \le 2(|t_1|+|t_2|)^\alpha = 2|t_1-t_2|^\alpha = 2\psi(|t_1-t_2|).
\]
\medskip
\noindent\emph{Case 2: \(t_1t_2\ge 0\).}
Then \(t_1\) and \(t_2\) have the same sign, so
\[
|\psi(t_1)-\psi(t_2)|
=
\bigl||t_1|^\alpha-|t_2|^\alpha\bigr|.
\]
Without loss of generality, assume \(|t_1|\ge |t_2|\). Since \(\alpha\in(0,1)\), the map
\(s\mapsto s^\alpha\) is subadditive on \(\mathbb R_+\), i.e.,
\[
(a+b)^\alpha\le a^\alpha+b^\alpha
\qquad \forall a,b\ge 0.
\]
Applying this with \(a=|t_1|-|t_2|\) and \(b=|t_2|\), we get
\[
|t_1|^\alpha
=
\bigl((|t_1|-|t_2|)+|t_2|\bigr)^\alpha
\le (|t_1|-|t_2|)^\alpha+|t_2|^\alpha 
\]
Therefore,
\[
|\psi(t_1)-\psi(t_2)| = |t_1|^\alpha - |t_2|^\alpha
\le (|t_1|-|t_2|)^\alpha
\le |t_1-t_2|^\alpha
\le 2\psi(|t_1-t_2|).
\]
This completes the proof.
\end{proof}

We also need the following technical lemma that passes between the projected quantity \(\psi(z_k^i)\) and the original one \(\psi(z_k)\). 
\begin{lemma}\label{lemma:endpoint_conversion}
There exists \(C_{\mathrm{end}}>0\) such that the following holds.

If \(x_k\in \mathcal N(i,\alpha_k)\) for some \(i\in \llbracket 1,\overline T\rrbracket\), then
\[
|\psi(z_k^i)-\psi(z_k)|\le C_{\mathrm{end}}\alpha_k^{\beta_i(1-\theta)}.
\]
If, in addition, $d(x_k,M_i)\le 2c_i\alpha_k^{\gamma_i}$, then
\[
|\psi(z_k^i)-\psi(z_k)|\le 2\psi(|z_k^i - z_k|)\le C_{\mathrm{end}}\alpha_k^{p_i}.
\]
\end{lemma}

\begin{proof}
Since \(f\) is \(L\)-Lipschitz on \(X\),
\[
|z_k^i-z_k|=|f(y_k^i)-f(x_k)|\le L|y_k^i-x_k|=Ld(x_k,M_i).
\]
If \(x_k\in \mathcal N(i,\alpha_k)\), then \(d(x_k,M_i)\le c_i\alpha_k^{\beta_i}\), hence $|z_k^i-z_k|\le Lc_i\alpha_k^{\beta_i}$.
Using Fact \ref{fact:psi_holder},
\[
|\psi(z_k^i)-\psi(z_k)|
\le 2(Lc_i)^{1-\theta}\alpha_k^{\beta_i(1-\theta)}.
\]
This proves the first estimate.

If, in addition, $d(x_k,M_i)\le 2c_i\alpha_k^{\gamma_i}$, by the same arguments above,
\[
|\psi(z_k^i)-\psi(z_k)|\le 2\psi(|z_k^i- z_k|)
\le 2(2Lc_i)^{1-\theta}\alpha_k^{\gamma_i(1-\theta)}
=2(2Lc_i)^{1-\theta}\alpha_k^{p_i}.
\]
After enlarging \(C_{\mathrm{end}}\), both bounds hold.
\end{proof}
The following lemma bounds the relative length on an interval that starts at a distinguished index \(l\in\mathcal L\) and either stays near the same non-open stratum up to \(q(l)\), or leaves that neighborhood and then continues inside a single open stratum. This will serve as the main local estimate from which the later bounds are derived.
\begin{lemma}\label{lemma:core_bridge}
There exist \(\hat C>0\) such that the following holds.

Let \(l\in \mathcal L\) and \(k_2\in \llbracket l,K\rrbracket\) satisfy either $k_2\in \llbracket l,q(l)\rrbracket$ or
\(
[q(l)+1,k_2)\cap I_C=\emptyset
\).
Define
\[
\widetilde z_{k_2}^{\,l}:=
\begin{cases}
z_{k_2}^{G(l)}, & k_2\le q(l),\\[0.3em]
z_{k_2}, & k_2>q(l).
\end{cases}
\]
Then
\begin{equation}\label{eq:core_bridge}
\psi(z_l^{G(l)})-\psi(\widetilde z_{k_2}^{\,l})
\ge
\frac1C\,\mathrm{RL}(x_{\llbracket l,k_2\rrbracket})
-\sum_{k=l}^{k_2-1}E_k
-\hat C\,\alpha_l^{p_{G(l)}}.
\end{equation}
\end{lemma}

\begin{proof}
Let \(i=G(l)\) and \(q=q(l)\).

\medskip
\noindent\textit{Case 1: \(k_2\le q\).}
Then \(x_k\in \mathcal N(i,\alpha_k)\) for all \(k\in \llbracket l,k_2\rrbracket\). Applying Corollary~\ref{cor:y_k^i} on this interval gives
\[
\psi(z_l^i)-\psi(z_{k_2}^i)
\ge
\frac1C\sum_{k=l}^{k_2-1}|y_{k+1}^i-y_k^i|
-\sum_{k=l}^{k_2-1}E_k
-\alpha_l.
\]
By the definition of \(\mathrm{RL}(x_{\llbracket l,k_2\rrbracket})\),
\[
\mathrm{RL}(x_{\llbracket l,k_2\rrbracket})
=
|x_l-y_l^i|+\sum_{k=l}^{k_2-1}|y_{k+1}^i-y_k^i|.
\]
Hence
\[
\psi(z_l^i)-\psi(z_{k_2}^i)
\ge
\frac1C\,\mathrm{RL}(x_{\llbracket l,k_2\rrbracket})
-\sum_{k=l}^{k_2-1}E_k
-\frac1C|x_l-y_l^i|
-\alpha_l.
\]
Since \(l\in \mathcal L\subset I_C\) and \(i=G(l)\), we have $|x_l-y_l^i|=d(x_l,M_i)\le 2c_i\alpha_l^{\gamma_i}$. By \eqref{eq:relation_beta_p}, \(p_i=\gamma_i(1-\theta)<1\) and \(\alpha_l\le 1\), both \(\alpha_l^{\gamma_i}\) and \(\alpha_l\) are bounded by a constant multiple of \(\alpha_l^{p_i}\). This proves \eqref{eq:core_bridge} in the case \(k_2\le q\).

\medskip
\noindent\textit{Case 2: \(k_2>q\).}
Since \([q+1,k_2)\cap I_C=\emptyset\), Fact~\ref{fact:IC_away} yields an open stratum \(M_{i_0}\), \(i_0\in \llbracket T+1,\overline T\rrbracket\), such that
\[
x_k\in \mathcal N(i_0,\alpha_k)\qquad \forall k\in \llbracket q+1,k_2\rrbracket.
\]
As \(M_{i_0}\) is open,
\[
y_k^{i_0}=x_k,\qquad z_k^{i_0}=z_k,\qquad \forall k\in \llbracket q+1,k_2\rrbracket.
\]

Applying Corollary~\ref{cor:y_k^i} on \(\llbracket l,q\rrbracket\) along \(M_i\), and on \(\llbracket q+1,k_2\rrbracket\) along \(M_{i_0}\), we obtain
\[
\psi(z_l^i)-\psi(z_q^i)
\ge
\frac1C\sum_{k=l}^{q-1}|y_{k+1}^i-y_k^i|
-\sum_{k=l}^{q-1}E_k
-\alpha_l,
\]
and
\[
\psi(z_{q+1})-\psi(z_{k_2})
\ge
\frac1C\sum_{k=q+1}^{k_2-1}|x_{k+1}-x_k|
-\sum_{k=q+1}^{k_2-1}E_k
-\alpha_{q+1}.
\]
Since \(\alpha_{q+1}\le \alpha_l\), summing the two inequalities and inserting the bridge terms at \(q\), we get
\begin{align*}
\psi(z_l^i)-\psi(z_{k_2})
\ge{}&
\frac1C\sum_{k=l}^{q-1}|y_{k+1}^i-y_k^i|
+\frac1C\sum_{k=q+1}^{k_2-1}|x_{k+1}-x_k|
-\sum_{k=l}^{k_2-1}E_k
-2\alpha_l\\
&+\bigl(\psi(z_q^i)-\psi(z_q)\bigr)
+\bigl(\psi(z_q)-\psi(z_{q+1})\bigr)\\
\ge{}&
\frac1C\sum_{k=l}^{q-1}|y_{k+1}^i-y_k^i|
+\frac1C\sum_{k=q+1}^{k_2-1}|x_{k+1}-x_k|
-\sum_{k=l}^{k_2-1}E_k
-2\alpha_l\\
&-2\psi(|z_q^i-z_q|)
-2\psi(|z_q-z_{q+1}|),
\end{align*}
where we used Fact \ref{fact:psi_holder}.
Now
\[
\mathrm{RL}(x_{\llbracket l,k_2\rrbracket})
\le
|x_l-y_l^i|
+\sum_{k=l}^{q-1}|y_{k+1}^i-y_k^i|
+|x_q-y_q^i|
+\sum_{k=q}^{k_2-1}|x_{k+1}-x_k|,
\]
Substituting this inequality yields
\begin{align}
\psi(z_l^i)-\psi(z_{k_2})
\ge{}&
\frac1C\,\mathrm{RL}(x_{\llbracket l,k_2\rrbracket})
-\sum_{k=l}^{k_2-1}E_k \notag\\
&-\frac1C\Bigl(|x_l-y_l^i|+|x_q-y_q^i|+|x_q-x_{q+1}|\Bigr)
-2\alpha_l \label{eq:error_1}\\
&-2\psi(|z_q^i-z_q|)-2\psi(|z_q-z_{q+1}|).\label{eq:error_2}
\end{align}
We now bound the error terms \eqref{eq:error_1}-\eqref{eq:error_2} on the right-hand side. By the definition of $l$ and $q$ in \eqref{eq:def_l}, we have
\[
d(x_l,M_i)\le 2c_i\alpha_l^{\gamma_i},\qquad d(x_q,M_i)\le 2c_i\alpha_q^{\gamma_i}\le 2c_i\alpha_l^{\gamma_i},
\]
because \((\alpha_k)\) is decreasing. Therefore,
\[
|x_l-y_l^i|\le 2c_i\alpha_l^{\gamma_i},
\qquad
|x_q-y_q^i|\le 2c_i\alpha_l^{\gamma_i}.
\]
Moreover, as $f$ is $L$-Lipschitz in $X$,
\[
|x_q-x_{q+1}|\le L\alpha_q\le L\alpha_l.
\]
By \eqref{eq:relation_beta_p}, we have \(p_i:= \gamma_i(1-\theta)<\gamma_i\) and \(p_i<1\). As \(\alpha_l\le 1\), it follows that
\[
\frac1C\Bigl(|x_l-y_l^i|+|x_q-y_q^i|+|x_q-x_{q+1}|\Bigr)+2\alpha_l
\le \frac1C\Bigl(2c_i\alpha_l^{\gamma_i}+2c_i\alpha_l^{\gamma_i}+L\alpha_l\Bigr)+2\alpha_l \le C_1\alpha_l^{p_i}
\]
for some \(C_1>0\).

To bound \eqref{eq:error_2}, by Lemma \ref{lemma:endpoint_conversion} and the definition of $q$ in \eqref{eq:def_l}, there exists \(C_{\mathrm{end}}>0\) such that
\[
\psi(|z_q^i-z_q|)
\le  C_{\mathrm{end}}\alpha_l^{p_i}/2.
\]

Finally, by the expression of $z$ in \eqref{eq:expr_g_E_z},
\[
|z_q-z_{q+1}|
\le |f(x_q)-f(x_{q+1})|+|g_q-g_{q+1}|
\le L|x_q-x_{q+1}|+c\alpha_q^{1+\hat\beta}
\le C_3\alpha_l
\]
for some \(C_3>0\), since \(\alpha_q\le \alpha_l\le 1\). Hence
\[
\psi(|z_q-z_{q+1}|)
\le C_4\alpha_l^{1-\theta}
\le C_4\alpha_l^{p_i},
\]
because \(1-\theta>p_i\) and \(\alpha_l\le 1\).

Substituting the above bounds into the previous inequality, there exists $\hat C$ such that
\[
\psi(z_l^i)-\psi(z_{k_2})
\ge
\frac1C\,\mathrm{RL}(x_{\llbracket l,k_2\rrbracket})
-\sum_{k=l}^{k_2-1}E_k
-\hat C\,\alpha_l^{p_i},
\]
which proves \eqref{eq:core_bridge} in the case \(k_2>q\). By enlarging $\hat C$ if necessary, the same estimate holds for Case 1.
\end{proof}

Combining Lemma \ref{lemma:core_bridge} with Lemma \ref{lemma:endpoint_conversion} yields a clean comparison across two consecutive indices in \(\mathcal L\). In particular, it controls the relative length accumulated between \(l_m\) and \(l_{m+1}\) by the corresponding drop in \(\psi\), up to summable errors and endpoint terms.
\begin{corollary}\label{cor:lm->lm+1}
There exists \(\overline C>0\) such that for any \(m=0,\ldots,\overline m-1\),
\[
\psi\left(z^{G(l_m)}_{l_m}\right)-\psi\left(z^{G(l_{m+1})}_{l_{m+1}}\right)
\ge \frac{1}{C}\,\mathrm{RL}(x_{\llbracket l_m,l_{m+1}\rrbracket})
-\sum_{k = l_m}^{l_{m+1}-1}E_k
-\overline{C} \left(\alpha_{l_m}^{p_{G(l_m)}} +\alpha_{l_{m+1}}^{p_{G(l_{m+1})}}\right).
\]
\end{corollary}

\begin{proof}
Apply Lemma~\ref{lemma:core_bridge} with \(l=l_m\) and \(k_2=l_{m+1}\). Since \(l_{m+1}>q(l_m)\) and \([q(l_m)+1,l_{m+1})\cap I_C=\emptyset\) by the definition of $l_{m+1}$ in \eqref{eq:def_l}, we obtain
\[
\psi(z_{l_m}^{G(l_m)})-\psi(z_{l_{m+1}})
\ge
\frac1C\,\mathrm{RL}(x_{\llbracket l_m,l_{m+1}\rrbracket})
-\sum_{k=l_m}^{l_{m+1}-1}E_k
-\overline C\,\alpha_{l_m}^{p_{G(l_m)}}.
\]
Since \(l_{m+1}\in I_C\), combining Fact \ref{fact:IC_close} and Lemma~\ref{lemma:endpoint_conversion} gives
\[
|\psi(z_{l_{m+1}}^{G(l_{m+1})})-\psi(z_{l_{m+1}})|
\le C_{\mathrm{end}}\alpha_{l_{m+1}}^{p_{G(l_{m+1})}}.
\]
Combining the two inequalities if necessary yields the claim for a $\overline{C}$ there is large enough.
\end{proof}
The next step is to understand what happens on a larger interval of the form \(\llbracket l,s(l)\rrbracket\). The following lemma shows that such an interval has a dichotomic structure: either the relative length is already large enough to be useful, or the step size must decrease by a factor of two before the sequence can leave the corresponding neighborhood. This provides the combinatorial mechanism that will later control the cumulative endpoint errors.

\begin{lemma}\label{lemma:RM_or_half}
For any $Q>0$, there exists $\bar\alpha_Q\in (0,\hat\alpha]$ such that, for every $\alpha\in (0,\bar\alpha_Q]$,
\begin{equation}\label{eq:alphaQ_case1_new}
c_i2^{-\beta_i}\alpha^{\beta_i-p_i}-2c_i\alpha^{\gamma_i-p_i}-L\alpha^{1-p_i}\ge Q+\bar c,
\qquad \forall i\in \llbracket 1,T\rrbracket,
\end{equation}
and
\begin{equation}\label{eq:alphaQ_case2_new}
c_j\alpha^{\gamma_j-p_i}-L\alpha^{1-p_i}\ge Q+\bar c,
\qquad \forall i,j\in \llbracket 1,T\rrbracket \text{ with } M_j\subset \partial M_i.
\end{equation}
If $\bar\alpha\le \bar\alpha_Q$, then for any $l\in \mathcal L$ such that $s(l)<K$, it holds that either
\[
\mathrm{RL}(x_{\llbracket l,s(l)\rrbracket})\ge Q\alpha_l^{p_{G(l)}}
\qquad\text{or}\qquad
\alpha_{s(l)+1}\le \alpha_l/2.
\]
\end{lemma}

\begin{proof}
The existence of $\bar\alpha_Q$ satisfying \eqref{eq:alphaQ_case1_new}--\eqref{eq:alphaQ_case2_new}
follows from \eqref{eq:relation_beta_p}. Fix \(l\in \mathcal L\) such that \(s(l)<K\), and write
\[
i:=G(l),\qquad p:=p_i,\qquad s:=s(l).
\]

\medskip
\noindent\underline{Step 1: The case \(s=l\).}
Assume \(s=l\), and use the convention \(\mathrm{RL}(x_{\llbracket l,l\rrbracket})=0\).
It is enough to prove \(\alpha_{l+1}\le \alpha_l/2\). Suppose on the contrary that
\(\alpha_{l+1}>\alpha_l/2\). Since \(x_l\in \mathcal N(i,\alpha_l)\) and
\(x_{l+1}\notin \mathcal N(i,\alpha_{l+1})\), either $d(x_{l+1},M_i)>c_i\alpha_{l+1}^{\beta_i}$, or there exists \(j\in \llbracket 1,T\rrbracket\) with \(M_j\subset \partial M_i\) such that $d(x_{l+1},M_j)\le c_j\alpha_{l+1}^{\gamma_j}$.

In the first case, Fact \ref{fact:IC_close} gives \(d(x_l,M_i)\le 2c_i\alpha_l^{\gamma_i}\), hence
\[
L\alpha_l\ge |x_{l+1}-x_l|
\ge d(x_{l+1},M_i)-d(x_l,M_i)
> c_i\alpha_{l+1}^{\beta_i}-2c_i\alpha_l^{\gamma_i}
\ge c_i2^{-\beta_i}\alpha_l^{\beta_i}-2c_i\alpha_l^{\gamma_i},
\]
contradicting \eqref{eq:alphaQ_case1_new}.

In the second case, the definition of $G$ in \eqref{eq:def_G} yields $d(x_l,M_j)\ge 2c_j\alpha_l^{\gamma_j}$, and therefore
\[
L\alpha_l\ge |x_{l+1}-x_l|
\ge d(x_l,M_j)-d(x_{l+1},M_j)
\ge 2c_j\alpha_l^{\gamma_j}-c_j\alpha_{l+1}^{\gamma_j}
\ge c_j\alpha_l^{\gamma_j},
\]
as $\alpha_{l+1}\le \alpha_l$, contradicting \eqref{eq:alphaQ_case2_new}. Hence \(\alpha_{l+1}\le \alpha_l/2\).

\medskip
\noindent\underline{Step 2: Reduction to a lower bound on \(|x_l-x_s|\).}
From now on, assume that \(s\ge l+1\) and \(\alpha_{s+1}>\alpha_l/2\). We aim to prove
\[
\mathrm{RL}(x_{\llbracket l,s\rrbracket})\ge Q\alpha_l^p.
\]
In this step, we show that it suffices to prove
\begin{equation}\label{eq:displacement_bound}
    |x_l-x_s|\ge (Q+\bar c)\alpha_l^p.
\end{equation}

Let
\[
m_2:=\max\{m\in \llbracket 0,\overline m\rrbracket:l_m<s\},
\qquad
i_2:=G(l_{m_2}).
\]
By Lemma~\ref{lemma:RM_multi}, one of the following two alternatives holds.

\smallskip
\noindent\emph{Case 1: \(M_{i_2}\not\subset \partial M_i\).}
In this case,
\begin{equation}\label{eq:x_to_RM_sorted}
|x_l-x_s|
\le \mathrm{RL}(x_{\llbracket l,s\rrbracket})+\bar c\,\alpha_l^{\underline\beta_i}
\le \mathrm{RL}(x_{\llbracket l,s\rrbracket})+\bar c\,\alpha_l^p,
\end{equation}
where the second inequality uses \(\underline\beta_i>p\) and \(\alpha_l\le 1\). Hence,
if \eqref{eq:displacement_bound} holds, then
\[
\mathrm{RL}(x_{\llbracket l,s\rrbracket})
\ge |x_l-x_s|-\bar c\,\alpha_l^p
\ge Q\alpha_l^p.
\]
Thus the desired lower bound follows in this case from \eqref{eq:displacement_bound}.

\smallskip
\noindent\emph{Case 2: \(M_{i_2}\subset \partial M_i\).}
Write \(l=l_{m_1}\). By Definition~\ref{def:RL} and Lemma~\ref{lemma:RM_within_one},
\begin{align}
\mathrm{RL}(x_{\llbracket l,s\rrbracket})
&=
\mathrm{RL}(x_{\llbracket l_{m_1},l_{m_2}\rrbracket})
+
\mathrm{RL}(x_{\llbracket l_{m_2},s\rrbracket}) \notag\\
&=
\sum_{m=m_1}^{m_2-1}
\mathrm{RL}(x_{\llbracket l_m,l_{m+1}\rrbracket})
+
\mathrm{RL}(x_{\llbracket l_{m_2},s\rrbracket}) \notag\\
&\ge
|x_l-x_{l_{m_2}}|
+
\mathrm{RL}(x_{\llbracket l_{m_2},s\rrbracket}).
\label{eq:RL_split_last_block}
\end{align}

We now distinguish two subcases.

\smallskip
\noindent\emph{Subcase 2a: \(l_{m_2}=q(l_{m_2})\).}
By the definition of \(m_2\), we have $s\in \llbracket l_{m_2},l_{m_2+1}\rrbracket
=
\llbracket q(l_{m_2}),l_{m_2+1}\rrbracket$. Therefore, by Lemma~\ref{lemma:RM_within_one} and the triangular inequality,
\[
\mathrm{RL}(x_{\llbracket l,s\rrbracket})
\ge |x_l-x_{l_{m_2}}|
   + \mathrm{RL}(x_{\llbracket l_{m_2},s\rrbracket})
\ge |x_l-x_{l_{m_2}}|+|x_{l_{m_2}}-x_s|
\ge |x_l-x_s|.
\]
Together with \eqref{eq:displacement_bound}, this gives $\mathrm{RL}(x_{\llbracket l,s\rrbracket})\ge Q\alpha_l^p$.

\smallskip
\noindent\emph{Subcase 2b: \(l_{m_2}<q(l_{m_2})\).}
In this case, \(l_{m_2}+1\le s\). Hence, by Definition~\ref{def:RL} and
Remark~\ref{rem:RL_basic},
\[
\mathrm{RL}(x_{\llbracket l_{m_2},s\rrbracket})
\ge
\mathrm{RL}(x_{\llbracket l_{m_2},l_{m_2}+1\rrbracket})
=
|x_{l_{m_2}}-y^{i_2}_{l_{m_2}}|
+
|y^{i_2}_{l_{m_2}}-y^{i_2}_{l_{m_2}+1}|
\ge
|x_{l_{m_2}}-y^{i_2}_{l_{m_2}+1}|.
\]
Combining this with \eqref{eq:RL_split_last_block}, we get
\[
\mathrm{RL}(x_{\llbracket l,s\rrbracket})
\ge
|x_l-x_{l_{m_2}}|+|x_{l_{m_2}}-y^{i_2}_{l_{m_2}+1}|
\ge
|x_l-y^{i_2}_{l_{m_2}+1}|.
\]
Since \(y^{i_2}_{l_{m_2}+1}\in M_{i_2}\), it follows that $|x_l-y^{i_2}_{l_{m_2}+1}|\ge d(x_l,M_{i_2})$. Moreover, because \(M_{i_2}\subset \partial M_i\), the definition of \(i=G(l)\) in
\eqref{eq:def_G} gives $d(x_l,M_{i_2})>2c_{i_2}\alpha_l^{\gamma_{i_2}}$. Consequently,
\[
\mathrm{RL}(x_{\llbracket l,s\rrbracket})
\ge d(x_l,M_{i_2})
>2c_{i_2}\alpha_l^{\gamma_{i_2}}
\ge Q\alpha_l^p,
\]
where the last inequality follows from \eqref{eq:alphaQ_case2_new}. Thus this subcase is
settled directly, without using \eqref{eq:displacement_bound}.

\smallskip
Combining the two cases, we have shown that either $\mathrm{RL}(x_{\llbracket l,s\rrbracket})\ge Q\alpha_l^p$ already holds, or it follows from \eqref{eq:displacement_bound}. Therefore, in the remainder
of the proof it remains only to establish \eqref{eq:displacement_bound}.

\medskip
\noindent\underline{Step 3: Exit from \(\mathcal N(i,\alpha)\).}
By the definition of \(s(l)\), we have \(x_k\in \mathcal N(i,\alpha_k)\) for all
\(k=l,\dots,s\), while \(x_{s+1}\notin \mathcal N(i,\alpha_{s+1})\). Since
\(x_{\llbracket 0,K\rrbracket}\subset X\cap [|f|\le \epsilon]\), either
\[
d(x_{s+1},M_i)>c_i\alpha_{s+1}^{\beta_i},
\]
or there exists \(j\in \llbracket 1,T\rrbracket\) such that \(M_j\subset \partial M_i\) and
\[
d(x_{s+1},M_j)\le c_j\alpha_{s+1}^{\gamma_j}.
\]

We first consider the case \(d(x_{s+1},M_i)>c_i\alpha_{s+1}^{\beta_i}\). Recall that Fact \ref{fact:IC_close} gives
\(d(x_l,M_i)\le 2c_i\alpha_l^{\gamma_i}\). So
\[
|x_s-x_l|
\ge |x_{s+1}-x_l|-|x_{s+1}-x_s|
\ge d(x_{s+1},M_i)-d(x_l,M_i)-L\alpha_s
\ge c_i\alpha_{s+1}^{\beta_i}-2c_i\alpha_l^{\gamma_i}-L\alpha_s.
\]
Using \(\alpha_{s+1}>\alpha_l/2\) and \(\alpha_s\le \alpha_l\), we obtain
\[
|x_s-x_l|
\ge c_i2^{-\beta_i}\alpha_l^{\beta_i}-2c_i\alpha_l^{\gamma_i}-L\alpha_l
\ge (Q+\bar c)\alpha_l^p
\]
by \eqref{eq:alphaQ_case1_new}.

It remains to consider the case \(d(x_{s+1},M_j)\le c_j\alpha_{s+1}^{\gamma_j}\) for some
\(M_j\subset \partial M_i\). The definition of $G$ in \eqref{eq:def_G} gives $d(x_l,M_j)\ge 2c_j\alpha_l^{\gamma_j}$. Hence
\[
|x_s-x_l|
\ge |x_{s+1}-x_l|-|x_{s+1}-x_s|
\ge d(x_l,M_j)-d(x_{s+1},M_j)-L\alpha_s
\ge 2c_j\alpha_l^{\gamma_j}-c_j\alpha_{s+1}^{\gamma_j}-L\alpha_s.
\]
Since \(\alpha_{s+1}\le \alpha_s\le \alpha_l\), it follows that
\[
|x_s-x_l|
\ge c_j\alpha_l^{\gamma_j}-L\alpha_l
\ge (Q+\bar c)\alpha_l^p
\]
by \eqref{eq:alphaQ_case2_new}.

Thus, in all cases,
\[
|x_s-x_l|\ge (Q+\bar c)\alpha_l^p.
\]
With the claim proved in Step 2, we have
\[
\mathrm{RL}(x_{\llbracket l,s\rrbracket})\ge Q\alpha_l^p.
\]
This completes the proof.
\end{proof}
We now invoke Lemma~\ref{lemma:RM_or_half} to estimate the sum of the quantities
\(\alpha_\ell^{p_{G(\ell)}}\) over \(\ell\in \mathcal L\cap [] k_1,k_2]\).
These are precisely the error terms produced when the local estimates of
Corollary~\ref{cor:lm->lm+1} are summed over consecutive blocks. The next lemma shows
that their total contribution is bounded by an arbitrarily small multiple of the relative
length, up to an additional remainder of order \(\alpha_{k_1}^{\underline p}\). For any finite
set \(A\), we denote its cardinality by \(|A|\).
\begin{lemma}\label{lemma:sum_alpha_p}
Given \(0\le k_1<k_2\le K\), let
\(
u:=\bigl|G([k_1,k_2]\cap I_C)\bigr|
\).
For any \(Q>0\), let \(\bar{\alpha}_Q\) be given by Lemma~\ref{lemma:RM_or_half}. If
\(\bar\alpha\le \bar{\alpha}_Q\), then
\begin{equation}\label{eq:sum_alpha_p_final}
\sum_{\ell\in \mathcal L\cap [k_1,k_2]}\alpha_{\ell}^{p_{G(\ell)}}
\le
\frac{u}{Q}\,\mathrm{RL}(x_{\llbracket k_1,k_2\rrbracket})
+
u\alpha_{k_1}^{\underline p}
\left(
1+\sum_{j=0}^{\left\lfloor\log_2(\alpha_{k_1}/\alpha_{k_2})\right\rfloor-1}2^{-j\underline p}
\right).
\end{equation}
\end{lemma}

\begin{proof}
We split the sum into two parts according to whether the corresponding block has already ended before \(k_2\):
\begin{equation}\label{eq:alpha_split}
 \sum_{\ell\in \mathcal L\cap [k_1,k_2]}\alpha_{\ell}^{p_{G(\ell)}}
=
\sum_{\substack{\ell\in \mathcal L\\ k_1 \le \ell \le k_2,\ s(\ell)<k_2}}\alpha_{\ell}^{p_{G(\ell)}}
+
\sum_{\substack{\ell\in \mathcal L\\ k_1 \le \ell \le k_2\le s(\ell)}}\alpha_{\ell}^{p_{G(\ell)}}.   
\end{equation}

We start by estimating the first sum. For each \(i\in \llbracket 1,T\rrbracket\), define
\[
\mathcal L_i:= \{\ell\in \mathcal L \cap [k_1,k_2]: \ s(\ell)<k_2,\ G(\ell) = i\}.
\]
Then \(\{\mathcal L_i\}_{i\in \llbracket 1,T\rrbracket}\) forms a partition of
\(\{\ell\in \mathcal L\cap [k_1,k_2]: s(\ell)<k_2\}\). Moreover, if \(\mathcal L_i\neq \emptyset\), then
\(
G(\mathcal L_i)=\{i\}\subset G([k_1,k_2]\cap I_C),
\)
so this can happen for at most \(u\) different values of \(i\).

Fix \(i\in \llbracket 1,T\rrbracket\) such that \(\mathcal L_i\neq \emptyset\), and write
\[
\mathcal L_i=\{\ell_1<\ell_2<\cdots<\ell_r\}.
\]
By Fact~\ref{fact:no_repeat},
\begin{equation}\label{eq:ell_sep_revised}
\ell_{t+1}>s(\ell_t)\qquad \forall t\in \llbracket 1,r-1\rrbracket.
\end{equation}
Since \(s(\ell_t)<k_2\le K\), Lemma~\ref{lemma:RM_or_half} applies to each \(\ell_t\), and yields
\[
\mathrm{RL}(x_{\llbracket \ell_t,s(\ell_t)\rrbracket})
\ge Q\bigl(1-I_t\bigr)\alpha_{\ell_t}^{p_i},
\]
where
\[
I_t:= \left\{\begin{array}{ll}
1 &\text{if }\alpha_{s(\ell_t)+1}\le \alpha_{\ell_t}/2,\\
0 &\text{if }\alpha_{s(\ell_t)+1}> \alpha_{\ell_t}/2.
\end{array}\right.
\]
Hence
\[
\alpha_{\ell_t}^{p_i}
\le \frac1Q\,\mathrm{RL}(x_{\llbracket \ell_t,s(\ell_t)\rrbracket})
+I_t\alpha_{\ell_t}^{p_i}.
\]
Summing over \(t=1,\dots,r\), we obtain
\begin{equation}\label{eq:sum_fixed_i_revised}
\sum_{\ell\in \mathcal L_i}\alpha_\ell^{p_i}
\le \frac1Q\sum_{t=1}^r\mathrm{RL}(x_{\llbracket \ell_t,s(\ell_t)\rrbracket})
+\sum_{t=1}^r I_t\alpha_{\ell_t}^{p_i}.
\end{equation}

By \eqref{eq:ell_sep_revised}, the intervals
\[
\llbracket \ell_t,s(\ell_t)\rrbracket,\qquad t=1,\dots,r,
\]
are pairwise disjoint and all lie in \(\llbracket k_1,k_2\rrbracket\). By Definition~\ref{def:RL}, each
\(\mathrm{RL}(x_{\llbracket \ell_t,s(\ell_t)\rrbracket})\) is a sum of some of the nonnegative terms appearing in
\(\mathrm{RL}(x_{\llbracket k_1,k_2\rrbracket})\). Since the corresponding intervals are disjoint, these groups of terms are disjoint as well. Therefore,
\[
\sum_{t=1}^r\mathrm{RL}(x_{\llbracket \ell_t,s(\ell_t)\rrbracket})
\le \mathrm{RL}(x_{\llbracket k_1,k_2\rrbracket}).
\]

It remains to estimate \(\sum_{t=1}^r I_t\alpha_{\ell_t}^{p_i}\). Let 
\[
J:=\{t\in \llbracket 1,r\rrbracket:I_t=1\}=\{t_1<\cdots<t_B\}.
\]
If \(J=\emptyset\), then this sum is zero. Otherwise, since \(I_{t_b}=1\), we have
\[
\alpha_{s(\ell_{t_b})+1}\le \alpha_{\ell_{t_b}}/2.
\]
If \(b<B\), then by \eqref{eq:ell_sep_revised}, we have
\[
\alpha_{\ell_{t_b+1}}\le \alpha_{s(\ell_{t_b})+1}\le \alpha_{\ell_{t_b}}/2.
\]
It follows inductively that
\[
\alpha_{\ell_{t_b}}\le 2^{-(b-1)}\alpha_{k_1},
\qquad b=1,\dots,B.
\]
Therefore,
\[
\sum_{t=1}^r I_t\alpha_{\ell_t}^{p_i}
=\sum_{b=1}^{B}\alpha_{\ell_{t_b}}^{p_i}
\le \alpha_{k_1}^{p_i}\sum_{b=0}^{B-1}2^{-bp_i}.
\]

We next bound \(B\). Since \(s(\ell_{t_B})<k_2\), one has \(s(\ell_{t_B})+1\le k_2\), so
\[
\alpha_{k_2}\le \alpha_{s(\ell_{t_B})+1}\le \alpha_{\ell_{t_B}}/2\le 2^{-B}\alpha_{k_1}.
\]
Thus
\[
B\le \left\lfloor \log_2\frac{\alpha_{k_1}}{\alpha_{k_2}}\right\rfloor,
\]
and consequently
\[
\sum_{t=1}^r I_t\alpha_{\ell_t}^{p_i}
\le \alpha_{k_1}^{p_i}\sum_{j=0}^{\left\lfloor\log_2(\alpha_{k_1}/\alpha_{k_2})\right\rfloor-1}2^{-jp_i}.
\]

Substituting the last two bounds into \eqref{eq:sum_fixed_i_revised}, we get
\[
\sum_{\ell\in \mathcal L_i}\alpha_\ell^{p_i}
\le \frac1Q\,\mathrm{RL}(x_{\llbracket k_1,k_2\rrbracket})
+\alpha_{k_1}^{p_i}\sum_{j=0}^{\left\lfloor\log_2(\alpha_{k_1}/\alpha_{k_2})\right\rfloor-1}2^{-jp_i}.
\]
Summing this inequality over all \(i\in \llbracket 1,T\rrbracket\) such that \(\mathcal L_i\neq \emptyset\), and using that there are at most \(u\) such indices, yields
\begin{align}
\sum_{\substack{\ell\in \mathcal L\\ k_1 \le \ell \le k_2,\ s(\ell)<k_2}}\alpha_{\ell}^{p_{G(\ell)}}
&= \sum_{i=1}^T\sum_{\ell\in \mathcal L_i}\alpha_\ell^{p_i}\notag\\
&\le \frac{u}{Q}\,\mathrm{RL}(x_{\llbracket k_1,k_2\rrbracket})
+\sum_{i\in \llbracket 1,T\rrbracket: \mathcal L_i \ne \emptyset} \alpha_{k_1}^{p_i}
\sum_{j=0}^{\left\lfloor\log_2(\alpha_{k_1}/\alpha_{k_2})\right\rfloor-1}2^{-jp_i}\notag\\
&\le \frac{u}{Q}\,\mathrm{RL}(x_{\llbracket k_1,k_2\rrbracket})
+u\alpha_{k_1}^{\underline p}\sum_{j=0}^{\left\lfloor\log_2(\alpha_{k_1}/\alpha_{k_2})\right\rfloor-1}2^{-j\underline p}. \label{eq:first_sum_bound_final}
\end{align}

We now estimate the second sum in \eqref{eq:alpha_split}. For each \(i\in \llbracket 1,T\rrbracket\), define
\[
\mathcal J_i:=\{\ell\in \mathcal L:k_1\le \ell\le k_2\le s(\ell),\ G(\ell)=i\}.
\]
Again, \(\{\mathcal J_i\}_{i\in \llbracket 1,T\rrbracket}\) forms a partition of
\(\{\ell\in \mathcal L:k_1\le \ell\le k_2\le s(\ell)\}\), and \(\mathcal J_i\neq \emptyset\) for at most \(u\) different values of \(i\). Also, \(|\mathcal J_i|\le 1\) for every \(i\). Indeed, if \(\bar\ell<\tilde\ell\) both belong to \(\mathcal J_i\), then Fact~\ref{fact:no_repeat} gives
\[
\tilde\ell>s(\bar\ell)\ge k_2,
\]
contradicting \(\tilde\ell\le k_2\). Hence
\begin{equation}\label{eq:second_sum_bound_final}
    \sum_{\substack{\ell\in \mathcal L\\ k_1 \le \ell \le k_2\le s(\ell)}}\alpha_{\ell}^{p_{G(\ell)}}
= \sum_{i=1}^T\sum_{\ell\in \mathcal J_i}\alpha_\ell^{p_i} \le \sum_{i\in \llbracket 1,T\rrbracket: \mathcal J_i \ne \emptyset}\alpha_{k_1}^{p_i}
\le u\alpha_{k_1}^{\underline p}.
\end{equation}

Combining \eqref{eq:first_sum_bound_final} and \eqref{eq:second_sum_bound_final} proves \eqref{eq:sum_alpha_p_final}.
\end{proof}
We are now ready to pass from local blockwise estimates to a bound on the relative length over an arbitrary interval whose endpoints lie in \(\mathcal L\). The next lemma is obtained by summing Corollary~\ref{cor:lm->lm+1} over consecutive blocks and then absorbing the resulting endpoint errors by Lemma~\ref{lemma:sum_alpha_p}. It is the main estimate of this subsection.

\begin{lemma}\label{lemma:RM_arbitrary_interval}
Let \(k_1,k_2\in \mathcal L\) satisfy \(k_1<k_2\), and let $u:=\bigl|G([k_1,k_2]\cap I_C)\bigr|$. Let $\overline C>0$ be given by Corollary \ref{cor:lm->lm+1}.  Fix \(Q>2\overline C Cu\) and let $\bar{\alpha}_Q$ be given by Lemma \ref{lemma:RM_or_half}.  If  $\bar{\alpha}\le \bar{\alpha}_Q$, then
\begin{align*}
\mathrm{RL}(x_{\llbracket k_1,k_2\rrbracket})
\le \frac{C}{1-2\overline C Cu/Q}\Bigg(
&\psi\bigl(z_{k_1}^{G(k_1)}\bigr)-\psi\bigl(z_{k_2}^{G(k_2)}\bigr)
+\sum_{k=k_1}^{k_2-1}E_k\\
&+2\overline C\,u\,\alpha_{k_1}^{\underline p}
\Bigg(1+\sum_{j=0}^{\left\lfloor\log_2(\alpha_{k_1}/\alpha_{k_2})\right\rfloor-1}2^{-j\underline p}\Bigg)
\Bigg).
\end{align*}
\end{lemma}

\begin{proof}
Let \(m_1,m_2\in \llbracket 0,\overline m\rrbracket\) be such that \(k_1=l_{m_1}\) and \(k_2=l_{m_2}\). Since \(k_1,k_2\in \mathcal L\) and \(k_1<k_2\), we have \(m_1<m_2\). Applying Corollary~\ref{cor:lm->lm+1} for \(m=m_1,\ldots,m_2-1\), and summing the resulting inequalities, we obtain
\begin{align*}
\psi\bigl(z_{k_1}^{G(k_1)}\bigr)-\psi\bigl(z_{k_2}^{G(k_2)}\bigr)={}&\sum_{m=m_1}^{m_2-1}\Bigl(\psi\bigl(z_{l_m}^{G(l_m)}\bigr)-\psi\bigl(z_{l_{m+1}}^{G(l_{m+1})}\bigr)\Bigr)\\
\ge{}&
\frac1C\sum_{m=m_1}^{m_2-1}\mathrm{RL}(x_{\llbracket l_m,l_{m+1}\rrbracket})
-\sum_{m=m_1}^{m_2-1}\sum_{k=l_m}^{l_{m+1}-1}E_k\\
&-\overline C\sum_{m=m_1}^{m_2-1}\Bigl(\alpha_{l_m}^{p_{G(l_m)}}+\alpha_{l_{m+1}}^{p_{G(l_{m+1})}}\Bigr).
\end{align*}
By the definition of \(\mathrm{RL}\) on a union of consecutive intervals in Definition \ref{def:RL}, the first sum equals
\[
\sum_{m=m_1}^{m_2-1}\mathrm{RL}(x_{\llbracket l_m,l_{m+1}\rrbracket})
=
\mathrm{RL}(x_{\llbracket k_1,k_2\rrbracket}),
\]
and the double sum telescopes to
\[
\sum_{m=m_1}^{m_2-1}\sum_{k=l_m}^{l_{m+1}-1}E_k
=
\sum_{k=k_1}^{k_2-1}E_k.
\]
Moreover, by Lemma~\ref{lemma:sum_alpha_p},
\begin{align*}
 \sum_{m=m_1}^{m_2-1}\Bigl(\alpha_{l_m}^{p_{G(l_m)}}+\alpha_{l_{m+1}}^{p_{G(l_{m+1})}}\Bigr)
&\le 2\sum_{\ell\in \mathcal L\cap [k_1,k_2]}\alpha_\ell^{p_{G(\ell)}}\\ &\le\frac{2u}{Q}\,\mathrm{RL}(x_{\llbracket k_1,k_2\rrbracket})
+
2u\alpha_{k_1}^{\underline p}
\left(
1+\sum_{j=0}^{\left\lfloor\log_2(\alpha_{k_1}/\alpha_{k_2})\right\rfloor-1}2^{-j\underline p}
\right).
\end{align*}
Thus,
\begin{align*}
\psi\bigl(z_{k_1}^{G(k_1)}\bigr)-\psi\bigl(z_{k_2}^{G(k_2)}\bigr)
\ge{}&
\Bigl(\frac1C-\frac{2\overline Cu}{Q}\Bigr)\mathrm{RL}(x_{\llbracket k_1,k_2\rrbracket})
-\sum_{k=k_1}^{k_2-1}E_k\\
&-2\overline C\,u\,\alpha_{k_1}^{\underline p}
\Bigg(1+\sum_{j=0}^{\left\lfloor\log_2(\alpha_{k_1}/\alpha_{k_2})\right\rfloor-1}2^{-j\underline p}\Bigg).
\end{align*}
Since \(Q>2\overline C Cu\), the coefficient \(\frac1C-\frac{2\overline Cu}{Q}\) is positive. Rearranging the above inequality gives the desired estimate.
\end{proof}
Finally, to bound the relative length over an arbitrary interval, we also need a one-sided version of the previous bound when the interval ends at the terminal index \(K\). Since there is no further distinguished index after the last element of \(\mathcal L\), this case must be handled separately. The following lemma provides the required estimate on the terminal piece.
\begin{lemma}\label{lemma:RM_terminal_piece}
There exist  \(C_{\mathrm{tail},1},C_{\mathrm{tail},2}>0\) such that the following holds.

Assume that $\mathcal L \ne \emptyset$ and let \(l:=\max_{\ell \in \mathcal L} \ell\). Then
\begin{equation}\label{eq:terminal_piece_RM}
\mathrm{RL}(x_{\llbracket l,K\rrbracket})
\le
C_{\mathrm{tail},1}\left(
\psi(z_l^{G(l)})-\psi(z_K)\right)
+C_{\mathrm{tail},2}\left(\sum_{k=l}^{K-1}E_k
+\alpha_l^{\underline\beta(1-\theta)}
\right).
\end{equation}
\end{lemma}

\begin{proof}
Write
\(
i:=G(l)\) and \(q:=q(l)\). 
It suffices to prove that
\begin{equation}\label{eq:terminal_RM_lower}
\psi(z_l^i)-\psi(z_K)
\ge
\frac1C\,\mathrm{RL}(x_{\llbracket l,K\rrbracket})
-\sum_{k=l}^{K-1}E_k
-C_1\alpha_l^{\underline\beta(1-\theta)}
\end{equation}
for some constant \(C_1>0\).

We distinguish two cases.

\medskip
\noindent\textit{Case 1: \(K>q\).}
Since \([q+1,K)\cap I_C=\emptyset\), Lemma~\ref{lemma:core_bridge} applies with \(k_2=K\), and yields
\[
\psi(z_l^i)-\psi(z_K)
\ge
\frac1C\,\mathrm{RL}(x_{\llbracket l,K\rrbracket})
-\sum_{k=l}^{K-1}E_k
-\hat C\,\alpha_l^{p_i}.
\]
By \eqref{eq:relation_beta_p}, $\alpha_l^{p_i}\le \alpha_l^{\underline\beta(1-\theta)}$ and thus \eqref{eq:terminal_RM_lower} follows in this case.

\medskip
\noindent\textit{Case 2: \(K\le q\).}
Lemma~\ref{lemma:core_bridge} gives
\[
\psi(z_l^i)-\psi(z_K^i)
\ge
\frac1C\,\mathrm{RL}(x_{\llbracket l,K\rrbracket})
-\sum_{k=l}^{K-1}E_k
-\hat C\,\alpha_l^{p_i}.
\]
Since \(x_K\in \mathcal N(i,\alpha_K)\), Lemma~\ref{lemma:endpoint_conversion} yields
\[
|\psi(z_K^i)-\psi(z_K)|
\le C_{\mathrm{end}}\alpha_K^{\beta_i(1-\theta)}
\le C_{\mathrm{end}}\alpha_l^{\beta_i(1-\theta)}
\le C_{\mathrm{end}}\alpha_l^{\underline\beta(1-\theta)}.
\]
Combining the last two inequalities and using again \(\alpha_l^{p_i}\le \alpha_l^{\underline\beta(1-\theta)}\), we obtain \eqref{eq:terminal_RM_lower} also in this case.
\end{proof}
\subsection{Proof of Theorem \ref{thm:diameter}}

\label{sec:final_proof}
We are now ready to estimate the diameter of subgradient sequences. Let $f:\mathbb R^n\to \mathbb R$ be locally Lipschitz definable and $X\subset \mathbb R^n$ be compact definable. Applying Corollary \ref{cor:y_k^i} gives a stratification \(\mathcal{M}:= \{M_1,\dots,M_{\overline{T}}\}\) of \(X\), and constants including \( c_i,\beta_i,\gamma_i>0\) and $\hat\alpha \in (0,1]$; we use the same notation and constants introduced at the beginning of Section \ref{sec:RM_def}.

Let $\overline C>0$ be the constant that appears in Corollary \ref{cor:lm->lm+1} and Lemma \ref{lemma:RM_arbitrary_interval}, and $Q:= 4\overline C CT>0$. Also, we let $\bar{\alpha}_Q>0$ be given by Lemma \ref{lemma:RM_or_half}. Fix a subgradient sequence \((x_k)_{k\in\mathbb N}\) with step sizes
\[
0<\alpha_{K-1}\le \cdots \le \alpha_0\le \bar\alpha,
\qquad
x_{\llbracket 0,K\rrbracket}\subset X\cap [|f|\le \epsilon],
\]
where $\bar{\alpha} \le \min\{\hat \alpha, \bar{\alpha}_Q\}$ is an upper bound on step sizes such that \eqref{eq:standing_alpha_small} holds.  For any such sequence, we may define its length relative to the stratification $\mathcal M$ following the construction in Definition \ref{def:RL}, and all the results in Sections \ref{sec:RM_def} and \ref{sec:bound_rm} hold true.

We write 
\begin{equation}\label{eq:final_beta}
    \beta:=\min\{\hat\beta,\underline\beta(1-\theta)\}>0.
\end{equation}

We distinguish two cases according to whether the sequence ever approaches a non-open stratum. If \(I_C\neq \emptyset\), we set
\[
l^-:=l_0=\min I_C,
\qquad
l^+:=l_{\overline m}=\max \mathcal L.
\]
If \(I_C=\emptyset\), we simply set
\[
l^-=l^+=K.
\]
With this convention, we will estimate the relative length \(\mathrm{RL}(x_{\llbracket 0,K\rrbracket})\) by splitting the interval at \(l^-\) and \(l^+\), and only afterward convert the resulting bound into a diameter estimate. The case \(I_C=\emptyset\) is therefore included as a degenerate case of the same decomposition. In the argument below, we focus on the nontrivial case \(I_C\neq \emptyset\); at the end of Step~4 we verify that the same final bound also covers the case \(I_C=\emptyset\).

\noindent\underline{Step 1: the initial piece \(\llbracket 0,l^-\rrbracket\).}
Since \([0,l^-)\cap I_C=\emptyset\), Fact~\ref{fact:IC_away} yields an open stratum \(M_{i_-}\), with
\(i_-\in \llbracket T+1,\overline T\rrbracket\), such that
\[
x_k\in \mathcal N(i_-,\alpha_k)
\qquad \forall k\in \llbracket 0,l^-\rrbracket.
\]
Hence
\begin{equation} \label{eq:RL_initial}
    \mathrm{RL}(x_{\llbracket 0,l^-\rrbracket})
=
\sum_{k=0}^{l^--1}|x_{k+1}-x_k| \le C\Bigl(\psi(z_0)-\psi(z_{l^-})+\sum_{k=0}^{l^--1}E_k+\alpha_0\Bigr)
\end{equation}
by applying Corollary~\ref{cor:y_k^i} on \(\llbracket 0,l^-\rrbracket\).

\smallskip
\noindent\underline{Step 2: the middle piece \(\llbracket l^-,l^+\rrbracket\).}
If \(l^-=l^+\), then
\(
\mathrm{RL}(x_{\llbracket l^-,l^+\rrbracket})=0.
\)
Assume now that \(l^-<l^+\). Since
\[
u:=\bigl|G([l^-,l^+]\cap I_C)\bigr|\le |\llbracket 1,T\rrbracket| = T,
\]
the choice \(Q=4\overline C CT\) yield $(2\overline C Cu)/Q \le 1/2$. Therefore, by Lemma~\ref{lemma:RM_arbitrary_interval},
\begin{equation}\label{eq:RL_middle}
\mathrm{RL}(x_{\llbracket l^-,l^+\rrbracket})
\le
2C\Biggl(
\psi\bigl(z_{l^-}^{G(l^-)}\bigr)-\psi\bigl(z_{l^+}^{G(l^+)}\bigr)
+\sum_{k=l^-}^{l^+-1}E_k
+2\overline C T\,\alpha_{l^-}^{\underline p}
\sum_{j=0}^{\left\lfloor\log_2(\alpha_{l^-}/\alpha_{l^+})\right\rfloor}
2^{-j\underline p}
\Biggr).
\end{equation}
Note that \eqref{eq:RL_middle} holds true regardless of whether $l^-=l^+$ or $l^-<l^+$.

\smallskip
\noindent\underline{Step 3: the terminal piece \(\llbracket l^+,K\rrbracket\).} Lemma~\ref{lemma:RM_terminal_piece} yields
\begin{equation}\label{eq:RL_terminal}
\mathrm{RL}(x_{\llbracket l^+,K\rrbracket})
\le
C_{\mathrm{tail},1}\Bigl(
\psi\bigl(z_{l^+}^{G(l^+)}\bigr)-\psi(z_K)\Bigr)
+C_{\mathrm{tail},2}\left(\sum_{k=l^+}^{K-1}E_k
+\alpha_{l^+}^{\underline\beta(1-\theta)}
\right),
\end{equation}
for some $C_{\mathrm{tail},1},C_{\mathrm{tail},2}>0$. Again, the above bound remains valid when $l^+ = K$.

\smallskip
\noindent\underline{Step 4: combine the three relative-length bounds.}
By the definition of \(\mathrm{RL}\) on general intervals given in the third case of Definition \ref{def:RL},
\[
\mathrm{RL}(x_{\llbracket 0,K\rrbracket})
=
\mathrm{RL}(x_{\llbracket 0,l^-\rrbracket})
+
\mathrm{RL}(x_{\llbracket l^-,l^+\rrbracket})
+
\mathrm{RL}(x_{\llbracket l^+,K\rrbracket}).
\]
We multiply the right hand sides of \eqref{eq:RL_initial}, \eqref{eq:RL_middle}, and \eqref{eq:RL_terminal} so that the coefficients in front of the $\psi$ terms become $C_1 := \max\{2C,C_{\mathrm{tail},1}\}>0$. We then sum them up and obtain
\begin{align*}
\mathrm{RL}(x_{\llbracket 0,K\rrbracket})
\le{}\;&
C_1\Bigl(
\psi(z_0)-\psi(z_{l^-})
+\psi\bigl(z_{l^-}^{G(l^-)}\bigr)-\psi\bigl(z_{l^+}^{G(l^+)}\bigr)
+\psi\bigl(z_{l^+}^{G(l^+)}\bigr)-\psi(z_K)
\Bigr)\\
&+C_2\sum_{k=0}^{K-1}E_k
+C_2\Bigl(
\alpha_0
+\alpha_{l^+}^{\underline\beta(1-\theta)}
+\alpha_{l^-}^{\underline p}
\sum_{j=0}^{\left\lfloor\log_2(\alpha_{l^-}/\alpha_{l^+})\right\rfloor}
2^{-j\underline p}
\Bigr),
\end{align*}
some \(C_2 \ge C_1\). Since the \(\psi\)-terms telescope except at the index \(l^-\),
\[
\psi(z_0)-\psi(z_{l^-})
+\psi\bigl(z_{l^-}^{G(l^-)}\bigr)-\psi\bigl(z_{l^+}^{G(l^+)}\bigr)
+\psi\bigl(z_{l^+}^{G(l^+)}\bigr)-\psi(z_K)
=
\psi(z_0)-\psi(z_K)
+\psi\bigl(z_{l^-}^{G(l^-)}\bigr)-\psi(z_{l^-}).
\]
Since \(l^-\in I_C\), combining Fact \ref{fact:IC_close} and Lemma~\ref{lemma:endpoint_conversion} gives
\[
\bigl|\psi\bigl(z_{l^-}^{G(l^-)}\bigr)-\psi(z_{l^-})\bigr|
\le
C_{\mathrm{end}}\alpha_{l^-}^{p_{G(l^-)}} \le C_{\mathrm{end}}\alpha_{l^-}^{\underline p}.
\]
Therefore,
\begin{align}
\mathrm{RL}(x_{\llbracket 0,K\rrbracket})
\le{}\;&
C_1\bigl(\psi(z_0)-\psi(z_K)\bigr)
+C_3\sum_{k=0}^{K-1}E_k \notag\\
&+C_3\Bigl(
\alpha_0
+\alpha_{l^-}^{\underline p}
+\alpha_{l^+}^{\underline\beta(1-\theta)}
+\alpha_{l^-}^{\underline p}
\sum_{j=0}^{\left\lfloor\log_2(\alpha_{l^-}/\alpha_{l^+})\right\rfloor}
2^{-j\underline p}
\Bigr), \label{eq:RL_total_pre}
\end{align}
where $C_3 := C_2 + C_{\mathrm{end}}>0$.

We next show that the above estimate holds for the case of $I_C = \emptyset$ as well. Indeed, in this case, by \eqref{eq:RL_initial}, we have
\begin{align*}
    \mathrm{RL}(x_{\llbracket 0,K\rrbracket})
&=
\sum_{k=0}^{K-1}|x_{k+1}-x_k|\\
&\le C\Bigl(\psi(z_0)-\psi(z_{K})+\sum_{k=0}^{K-1}E_k+\alpha_0\Bigr)\\
&\le C_1\Bigl(\psi(z_0)-\psi(z_{K})+\sum_{k=0}^{K-1}E_k+\alpha_0\Bigr)\\
&\le C_1\bigl(\psi(z_0)-\psi(z_K)\bigr)
+C_3\sum_{k=0}^{K-1}E_k + C_3 \alpha_0
\end{align*}
as $C_1\ge 2c >c$ and $C_3 \ge C_1$.

\smallskip
\noindent\underline{Step 5: convert relative length to diameter.}
Applying Lemma~\ref{lemma:diameter_by_RM} with \(k_1=0\) and \(k_2=K\) to \eqref{eq:RL_total_pre} and using \(\alpha_{l^+} \le \alpha_{l^-}\le \alpha_0\), we obtain
\begin{align}
\mathrm{diam}(x_{\llbracket 0,K\rrbracket})
\le{}\;&
C_1\bigl(\psi(z_0)-\psi(z_K)\bigr)
+C_3\sum_{k=0}^{K-1}E_k \notag\\
&+C_3\Bigl(
\alpha_0
+\alpha_0^{\underline p}
+\alpha_0^{\underline\beta}
+\alpha_0^{\underline\beta(1-\theta)}
+\alpha_0^{\underline p}
\sum_{j=0}^{\left\lfloor\log_2(\alpha_{l^-}/\alpha_{l^+})\right\rfloor}
2^{-j\underline p}
\Bigr) \label{eq:diam_pre_z_star}
\end{align}
after replacing $C_3$ by $\max\{C_3,4\bar{c}\}$. Since \(\beta \le \underline\beta(1-\theta) <\underline p<1 \) by \eqref{eq:relation_beta_p} and \eqref{eq:final_beta}, while \(\alpha_0\le 1\), each of the algebraic error terms on the right-hand side is bounded by a constant multiple of \(\alpha_0^{\beta}\). Moreover,
\[
\sum_{j=0}^{\left\lfloor\log_2(\alpha_{l^-}/\alpha_{l^+})\right\rfloor}
2^{-j\underline p}
\le
\sum_{j=0}^{\infty}2^{-j\underline p}
=
\frac{1}{1-2^{-\underline p}},
\]
so the last term in \eqref{eq:diam_pre_z_star} is also bounded by a constant multiple of \(\alpha_0^{\beta}\). Hence there exists \(C_4>0\) such that
\begin{equation}\label{eq:diam_pre_final_star}
\mathrm{diam}(x_{\llbracket 0,K\rrbracket})
\le
C_1\bigl(\psi(z_0)-\psi(z_K)\bigr)
+
C_4\Bigl(\alpha_0^{\beta}+\sum_{k=0}^{K-1}E_k\Bigr).
\end{equation}

It remains to rewrite the right-hand side in terms of \(f(x_0)\) and \(f(x_K)\). Since
\[
z_0=f(x_0)+g_0,
\qquad
z_K=f(x_K),
\qquad
g_0=C\sum_{k=0}^{K-1}\alpha_k^{1+\hat\beta},
\]
and \(\psi(t)=\operatorname{sgn}(t)|t|^{1-\theta}\), we have
\begin{align*}
    \psi(z_0)-\psi(z_K)
&\le
\psi(f(x_0))-\psi(f(x_K))
+2\psi(g_0)\\
&\le \operatorname{sgn}(f(x_0))|f(x_0)|^{1-\theta}
-
\operatorname{sgn}(f(x_K))|f(x_K)|^{1-\theta}
+
2c^{1-\theta}\Bigl(\sum_{k=0}^{K-1}\alpha_k^{1+\hat\beta}\Bigr)^{1-\theta},
\end{align*}
where the first inequality follows from Fact \ref{fact:psi_holder}. Also,
\[
\sum_{k=0}^{K-1}E_k
=
\sum_{k=0}^{K-1}\alpha_k^{1+\hat\beta}
+
c^\theta\sum_{k=0}^{K-1}\alpha_k
\Bigl(\sum_{j=k}^{K-1}\alpha_j^{1+\hat\beta}\Bigr)^\theta.
\]

Finally, since \(\beta\le \hat\beta\) and \(\alpha_k\le 1\),
\[
\alpha_k^{1+\hat\beta}\le \alpha_k^{1+\beta},
\qquad
\Bigl(\sum_{k=0}^{K-1}\alpha_k^{1+\hat\beta}\Bigr)^{1-\theta}
\le
\Bigl(\sum_{k=0}^{K-1}\alpha_k^{1+\beta}\Bigr)^{1-\theta},
\]
and
\[
\sum_{k=0}^{K-1}\alpha_k
\Bigl(\sum_{j=k}^{K-1}\alpha_j^{1+\hat\beta}\Bigr)^\theta
\le
\sum_{k=0}^{K-1}\alpha_k
\Bigl(\sum_{j=k}^{K-1}\alpha_j^{1+\beta}\Bigr)^\theta.
\]
Substituting these bounds into \eqref{eq:diam_pre_final_star}, we obtain
\begin{align*}
\mathrm{diam}(x_{\llbracket 0,K\rrbracket})
\le{}\;&
\varsigma_1\Bigl(
\operatorname{sgn}(f(x_0))|f(x_0)|^{1-\theta}
-
\operatorname{sgn}(f(x_K))|f(x_K)|^{1-\theta}
\Bigr)\\
&+\varsigma_2\Biggl(
\alpha_0^{\beta}
+\sum_{k=0}^{K-1}\alpha_k^{1+\beta}
+\Bigl(\sum_{k=0}^{K-1}\alpha_k^{1+\beta}\Bigr)^{1-\theta}
+\sum_{k=0}^{K-1}\alpha_k
\Bigl(\sum_{j=k}^{K-1}\alpha_j^{1+\beta}\Bigr)^\theta
\Biggr).
\end{align*}
with $\varsigma_1:= C_1$ and some $\varsigma_2>0$. \qed

\section*{Acknowledgments}
We are grateful to the reviewers and the associate editor for their very helpful and insightful comments, which have been extremely useful in improving the paper.
\appendix
\section{Lipschitz stratification}\label{sec:Lipschitz}

In this appendix, we recall the definition of Lipschitz stratification. We adhere the notations in \cite{nguyen2016lipschitz}. Given a definable stratification $\Sigma$ of $X$, denote by $X^i$ the $i$-th skeleton of the stratification $\Sigma$, which is the union of all the strata of dimension less than or equal to $i$. We thus get a sequence
\[
X  =  X^d  \supset  X^{d-1}  \supset  \cdots  \supset  X^l
\]
and such that each difference
$\mathring{X}^i :=  X^i \setminus X^{i-1}$
is an $i$-dimensional definable submanifold of $\mathbb{R}^n$ or empty. The strata coincide with the connected components of $\mathring{X}^i$. 

Let $c>1$ be a fixed constant. A \emph{$c$-chain} of $q \in \mathring{X}^j$ is a strictly decreasing sequence of indices
\[
j = j_1 > j_2 > \cdots > j_r = \ell
\]
and a corresponding sequence of points $q_{j_s} \in \mathring{X}^{j_s}$ such that $q_{j_1} = q$ and each $j_s$ is the greatest integer for which
\[
d(q,X^k) \ge 2 c^2  d(q,X^{j_s})
\quad \text{for all } k < j_s,
\quad \text{and}
\quad
|q - q_{j_s}| \le c   d(q,X^{j_s}).
\]
For each point $q \in \mathring{X}^j$, let $P_q : \mathbb{R}^n \to T_{\mathring{X}^j}(q) $ and $P_q^\perp = \mathrm{Id} - P_q : \mathbb{R}^n \to N_{\mathring{X}^j}(q)$ respectively denote the orthogonal projections from $\mathbb{R}^n$ onto the tangent and normal spaces to $\mathring{X}^j$.

\begin{definition}[Lipschitz stratification \cite{mostowski1985lipschitz}]\label{def:LipschitzStrat}
A stratification 
$\mathcal{X}  =  \{X_i\}_{i=\ell}^d $
of a definable set $X$ is said to be a Lipschitz stratification if for every $c>1$ there is some $C>0$ such that for every $c$-chain $\{q=q_{j_1}, \dots, q_{j_r}\}$ we have for every $1\le k\le r$,
\[
|P_{q_{j_1}}^\perp   P_{q_{j_2}} \cdots P_{q_{j_k}}|
 \le  C  \frac{\lvert q - q_{j_2}\rvert}{d\bigl(q,X^{j_{k-1}}\bigr)},
\]
and
\[
|(P_{q} - P_{q'}) P_{q_{j_2}} \cdots P_{q_{j_k}}|
 \le  C  \frac{\lvert q - q'\rvert}{d\bigl(q,X^{j_{k-1}}\bigr)},
\]
in particular,
\[
| P_q - P_{q'} |  \le  C \frac{\lvert q - q'\rvert}{d\bigl(q,X^{j-1}\bigr)}
\quad(\text{set $d\bigl(q,X^{\ell-1}\bigr)=1$ for $q\in X$}).
\]
\end{definition}

\bibliographystyle{alpha}
\bibliography{mybib}
\end{document}